
\documentclass[12pt,a4paper,reqno]{amsart}
\setlength{\topmargin}{-15mm}

\usepackage[left=2cm, right=2cm, bottom=3cm]{geometry} 
\usepackage{xcolor}

\newcommand\dela[1]{}




\usepackage[english]{babel}
\usepackage{palatino}

\usepackage{amsmath}
\usepackage{amsthm}
\usepackage{amssymb}
\usepackage{url}
\usepackage{hyperref}
\usepackage{graphicx}
\usepackage{url}
\usepackage{listings}
\usepackage{nicefrac}
\lstset{basicstyle=\scriptsize,
	xleftmargin=10mm,
	numbers=left, numberstyle=\tiny, stepnumber=2,
	frame=single, framexleftmargin=8mm, framexbottommargin=5mm,
	captionpos=b,
	aboveskip=10mm, belowskip=10mm}

\usepackage{tikz}
\usetikzlibrary{calc,fadings,decorations.pathreplacing}
\usepackage{standalone}
\usepackage{pgfplots}
\usepackage{pgfplotstable}
\usepgfplotslibrary{fillbetween}
\usetikzlibrary{patterns}
\makeindex
\usepackage{enumerate}

\usepackage[framemethod=tikz]{mdframed}
\mdfdefinestyle{fancy}{
	roundcorner=5pt,
	linewidth=4pt,
	linecolor=red!80,
	backgroundcolor=red!20
}


\allowdisplaybreaks


\newtheorem{Satz}{Theorem}[section]
\newtheorem{Theorem}[Satz]{Theorem}
\newtheorem{Lemma}[Satz]{Lemma}		
\newtheorem{Korollar}[Satz]{Corollary}	
\newtheorem{Prop}[Satz]{Proposition}	
\numberwithin{equation}{section}
\theoremstyle{definition}

\newtheorem{Definition}[Satz]{Definition}

\newtheorem{Assumption}[Satz]{Assumption}


\newcommand{\BIGboxplus}{\mathop{\mathchoice%
		{\raise-0.35em\hbox{\huge $\boxplus$}}%
		{\raise-0.15em\hbox{\Large $\boxplus$}}{\hbox{\large $\boxplus$}}{\boxplus}}}

\newcommand{\C}{\mathbb{C}} 
\newcommand{\K}{\mathbb{K}} 
\newcommand{\R}{\mathbb{R}} 
\newcommand{\Rd}{{\mathbb{R}^d}} 
\newcommand{\Z}{\mathbb{Z}} 
\newcommand{\N}{\mathbb{N}} 

\newcommand{\E}{\mathbb{E}}
\newcommand{\Prob}{\mathbb{P}}
\newcommand{\F}{\mathcal{F}}
\newcommand{\Filtration}{\mathbb{F}}
\newcommand{\tildeProb}{\tilde{\Prob}}
\newcommand{\Etilde}{\tilde{\E}}
\newcommand{\HS}{\operatorname{HS}}

\newcommand{\Yosida}{R_\lambda}

\newcommand{\energy}{\mathcal{E}}

\newcommand{\EA}{{E_A}}

\newcommand{\EAdual}{{E_A^*}}

\newcommand{\LinftyEA}{{L^\infty(0,T;\EA)}}

\newcommand{\LInfty}{{L^\infty(M)}}

\newcommand{\weaklyContinousEA}{C_w([0,T],\EA)}

\newcommand{\LalphaPlusEins}{{L^{\alpha+1}(M)}}

\newcommand{\LalphaPlusEinsDual}{{L^{\frac{\alpha+1}{\alpha}}(M)}}
\newcommand{\LalphaPlusEinsDualNorm}{{L^{(\alpha+1)/{\alpha}}}}

\newcommand{\LalphaPlusEinsKurz}{{L^{\alpha+1}}}

\newcommand{\D}{\mathcal{D}}
\newcommand{\df }{\mathrm{d}}
\newcommand{\im }{\mathrm{i}}
\newcommand{\sumM }{\sum_{m=1}^{\infty}}
\newcommand{\Real}{\operatorname{Re}}

\newcommand{\skpH}[2]{\big(#1,#2\big)_{\!H}}
\newcommand{\skp}[2]{\big(#1,#2\big)}
\newcommand{\skpHReal}[2]{\Real \big(#1,#2\big)_{H}}

\newcommand{\sqrtA}{A^{\frac{1}{2}}}

\newcommand{\norm}[1]{\Vert #1 \Vert}
\newcommand{\bigNorm}[1]{\left\Vert #1 \right\Vert}
\newcommand{\quadVar}[1]{\langle \langle #1 \rangle \rangle}
\newcommand{\duality}[2]{\langle #1, #2 \rangle}
\newcommand{\dualityReal}[2]{\Real \langle #1, #2 \rangle}

\newcommand{\Fhat}{\hat{F}}

\newcommand{\LinearOperators}[1]{{\mathcal{L}(#1)}}
\newcommand{\LinearOperatorsTwo}[2]{{\mathcal{L}(#1,#2)}}
\newcommand{\Id}{\operatorname{Id}}

\newcommand{\normEA}{{E_A}}
\newcommand{\normEAdual}{{E_A^*}}

\title{The stochastic nonlinear Schr\"odinger equation in unbounded domains and manifolds}

\author{Fabian Hornung$^1$}
\thanks{$^1$Institute for Analysis, Karlsruhe Institute of Technology, Englerstra\ss{}e 2, 76137 Karlsruhe, Germany.
	E-mail: fabianhornung89@gmail.com}

\date{\today}

\begin{document}

\begin{abstract}
	In this article, we construct a global martingale solution to  a general nonlinear Schr\"{o}dinger equation with linear multiplicative noise in the Stratonovich form. Our framework includes many examples of spatial domains like $\Rd$, non-compact Riemannian manifolds, and unbounded domains in $\Rd$ with different boundary conditions. The initial value belongs to the energy space $H^1$ and we treat subcritical focusing and defocusing  power nonlinearities. The proof is based on an approximation technique which makes use of spectral theoretic methods and an abstract Littlewood-Paley-decomposition. In the limit procedure, we employ tightness of the approximated solutions and Jakubowski's extension of the Skorohod Theorem to nonmetric spaces. 
\end{abstract}
\maketitle


\keywords{\textbf{Keywords:} Nonlinear Schr\"{o}dinger equation, multiplicative noise, Stratonovich noise, martingale solution, generalized Galerkin approximation, weak compactness method}

\section{Introduction}
We study the existence of a solution to the nonlinear Schr\"odinger equation (NLS) with multiplicative Stratonovich noise 
\begin{equation}
\label{ProblemStratonovich}
\left\{
\begin{aligned}
\df u(t)&= \left(-\im A u(t)-\im  F(u(t))\right) dt-\im \sumM e_m u(t) \circ \df \beta_m(t),\\
u(0)&=u_0,
\end{aligned}\right.
\end{equation}
in the energy space $E_A:=\D(\sqrtA),$ where $A$ is a selfadjoint, non-negative operator $A$ in a Hilbert space $H=L^2(M),$ $F\colon \EA\to \EAdual$ is a defocusing or focusing power-type nonlinearity,  $e_m$, $m\in \N$,  are  real-valued functions,  and $\beta_m$, $m\in\N$, are independent Brownian motions. Let us immediately formulate the main result of this paper which contains the existence of a global martingale solution of \eqref{ProblemStratonovich} for various different choices of $A$, $M$, $F$, and $e_m$, $m\in\N$.

\begin{Theorem}\label{mainTheoremIntro}
	Suppose that a) or b) or c) or d) is true. 
	\begin{itemize}
		\item[a)] Let $M\subset \Rd$ be a  Lipschitz domain, $A=-\Delta_N$ be the Neumann-Laplacian and \\$E_A=H^1(M).$ 
		\item[b)] Let $M\subset \Rd$ be a  domain, $A=-\Delta_D$ be the Dirichlet-Laplacian and $E_A=H^1_0(M).$ 
				\item[c)] Let $M=\Rd$, $A=-\Delta$, and $\EA=H^1(\Rd)$.
		\item[d)] Let $(M,g)$ be a complete Riemannian manifold with positive injectivity radius,  bounded geometry, and nonnegative Ricci curvature; let $A=-\Delta_g$ be the Laplace-Beltrami operator, and $E_A=H^1(M).$
	\end{itemize}	
	Choose the nonlinearity from $i)$ or $ii).$
	\begin{itemize}
		\item[i)] $F(w)= \vert w\vert^{\alpha-1}w$ with $ \alpha \in \left(1,1+\frac{4}{(d-2)_+}\right),$
		\item[ii)]$F(w)= -\vert w\vert^{\alpha-1}w$ with $ \alpha \in \left(1,1+\frac{4}{d}\right).$
	\end{itemize}
	Assume $u_0\in\EA$ and that the coefficients $e_m\colon M\to\R$ satisfy $\sumM \norm{e_m}_E^2<\infty$ for 
	\begin{align}\label{MultiplierClassIntro}
	E:=\begin{cases}
	H^{1,d}(M) \cap \LInfty, &  d\ge 3,\\
	H^{1,q}(M),& d=2,\\
	H^{1}(M),& d=1,\\
	\end{cases}
	\end{align}
	for some $q>2$ in the case $d=2.$ 
	Then, \eqref{ProblemStratonovich} has a global  
	martingale solution  $\left({\Omega}',{\F}',{\Prob}',{W}',{\Filtration}',u\right)$ in $\EA$ which satisfies $u\in \weaklyContinousEA$ almost surely, $u\in L^q({\Omega}',\LinftyEA)$ 
	for all $q\in [1,\infty)$, and
	$\norm{u(t)}_{L^2(M)}=\norm{u_0}_{L^2(M)}$
	almost surely for all $t\in [0,T].$
\end{Theorem}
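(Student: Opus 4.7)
The plan is to convert the Stratonovich equation \eqref{ProblemStratonovich} into its Itô counterpart, build approximate solutions by an abstract spectral Galerkin scheme, derive uniform a priori bounds, and extract a martingale solution by tightness and Jakubowski's theorem. Writing $B_m u := -\im e_m u$, the operator $B_m$ is skew-adjoint on $L^2(M)$, so the Stratonovich--Itô correction $\tfrac12\sumM B_m^2 u = -\tfrac12\sumM e_m^2 u$ is purely imaginary and produces no dissipation of the $L^2$ norm. Consequently, applying Itô's formula to $\norm{u(t)}_{L^2(M)}^2$ in the equivalent Itô formulation
\begin{equation*}
\df u = \Bigl(-\im A u - \im F(u) - \tfrac12 \sumM e_m^2 u\Bigr)\df t - \im \sumM e_m u\, \df \beta_m
\end{equation*}
formally yields the claimed mass conservation, a structural feature crucial for the whole argument.

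Because in the settings a)--d) the operator $A$ typically has continuous spectrum, a classical eigenfunction Galerkin method is unavailable, and I would instead use an \emph{abstract Littlewood--Paley decomposition} from the functional calculus for $A$. Fix $\chi \in C_c^\infty([0,\infty))$ equal to $1$ near $0$, set $S_n := \chi(A/n)$, and consider the truncated Itô SPDE
\begin{equation*}
\df u_n = \Bigl(-\im A u_n - \im S_n F(S_n u_n) - \tfrac12 \sumM S_n(e_m^2 S_n u_n)\Bigr)\df t - \im \sumM S_n(e_m S_n u_n)\, \df \beta_m.
\end{equation*}
Since $\chi$ has compact support, $S_n$ maps $\EAdual$ boundedly into $\D(A^k)$ for every $k$, so global existence of $u_n$ reduces to a standard Hilbert-space SDE theorem. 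The multiplier hypothesis \eqref{MultiplierClassIntro} combined with Sobolev and product inequalities ensures that multiplication by $e_m$ and $e_m^2$ is bounded on $\EA$, and the Littlewood--Paley framework provides uniform-in-$n$ boundedness of $S_n$ on $\EA$ and on $\LalphaPlusEins$. Applying Itô's formula to $\norm{u_n}_{L^2(M)}^2$ yields exact mass conservation for each approximant, while applying it to the NLS energy $\tfrac12 \norm{\sqrtA u_n}_{L^2(M)}^2 \pm \tfrac{1}{\alpha+1}\norm{u_n}_{\LalphaPlusEins}^{\alpha+1}$, and exploiting the Stratonovich--Itô cancellation between the $e_m^2$ drift and the $H^1$ contribution of the quadratic variation, together with BDG and Gronwall, produces a uniform bound $\E \sup_{t\in[0,T]}\norm{u_n(t)}_\EA^q<\infty$ for all $q\in[1,\infty)$. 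In the focusing case $ii)$, subcriticality $\alpha<1+\tfrac{4}{d}$ together with mass conservation and a Gagliardo--Nirenberg inequality is used to absorb the negative energy contribution.

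Tightness of the laws of $(u_n)$ is the next step. Since $M$ may be unbounded, the embedding $\EA \hookrightarrow L^2(M)$ is in general not compact, so I would work in the Jakubowski-type path space $\weaklyContinousEA \cap L^2(0,T;L^2_{\mathrm{loc}}(M))$: weak-$\EA$ compactness of trajectories follows from the uniform $\LinftyEA$ bound, while local strong compactness in $L^2_{\mathrm{loc}}(M)$ is obtained by exhausting $M$ by relatively compact sets and applying Aubin--Lions locally, combined with an Aldous-type fractional-time estimate in $\EAdual$ read off from the SPDE. Jakubowski's extension of the Skorohod theorem then provides a new stochastic basis $(\Omega',\F',\Prob',\Filtration')$, a subsequence, and processes $(\tilde u_n, \tilde W_n) \to (u, W')$ converging $\Prob'$-almost surely in this path space. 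The main obstacle — and the technical core of the proof — is passing to the limit in the nonlinear term: one has to upgrade the local strong $L^{\alpha+1}$ convergence of $\tilde u_n$ on compacta, combined with the uniform $\LinftyEA$ bound and Vitali's theorem, to $S_n F(S_n \tilde u_n)\to F(u)$ in a sense strong enough to test against elements of $\EA$. The stochastic integral is identified by the standard martingale-problem argument, namely by verifying that certain candidate processes are $\Filtration'$-martingales with the correct quadratic covariation; the Stratonovich form in the limit reappears by reversing the Itô correction, and the $L^2(M)$-conservation transfers to $u$ by lower semicontinuity together with the unitary character of the noise.
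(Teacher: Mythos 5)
Your outline follows the same overall strategy as the paper (abstract Littlewood--Paley truncation, mass and energy estimates, tightness plus Jakubowski, martingale identification), but as written it has concrete gaps. The most important one is the well-posedness of your truncated equation: you regularize the nonlinearity, the correction term and the noise by $S_n$, but the drift still contains the untreated unbounded operator $-\im A u_n$, so the claim that global existence ``reduces to a standard Hilbert-space SDE theorem'' does not hold for the equation you wrote. The paper resolves exactly this by introducing a second, sharp spectral cutoff $P_n=p_n(S)$, truncating the nonlinearity as $P_nF(u_n)$ and the initial datum, and posing the equation in the closed invariant subspace $H_n=P_n(H)$, on which $A|_{H_n}$ is bounded (Lemma~\ref{PnPropertiesLzwei}); only then do the local Lipschitz estimates in the $H$-norm apply. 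You would need either this device or an explicit argument that your solution stays in a spectrally localized subspace.

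Two further points. First, your claim that the $L^2$-conservation ``transfers to $u$ by lower semicontinuity'' is not correct: lower semicontinuity of the norm under weak convergence only yields $\norm{u(t)}_{L^2}\le\norm{u_0}_{L^2}$. Equality requires applying It\^o's formula to the \emph{limit} equation, whose nonlinear term is a priori only an abstract weak limit $\tilde F$; the cancellation $\Real\duality{\tilde F(t)}{\im v(t)}=0$ must first be established from the local strong convergence on the exhausting sets $M_n$ (Lemmas~\ref{RegularityOfSomeFunctionals} and~\ref{CancellationTildeF}), and the paper then uses the resulting mass conservation, combined with weak convergence, to upgrade to strong $H$-convergence and identify $\tilde F=F(v)$ by interpolation. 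Your order of operations (identify the nonlinearity first from local compactness, then run It\^o) can be made to work, but you must say which functionals on the non-metric path space are continuous or measurable, since Jakubowski only transfers laws, not pointwise identities; the paper devotes Lemmas~\ref{Lemma:LSC}, \ref{continuity:MartingaleMaps} and~\ref{RegularityOfSomeFunctionals} to this. Second, Theorem~\ref{mainTheoremIntro} is about four specific geometric settings, and a substantive part of its proof is the verification of the abstract hypotheses there: the uniform $L^{\alpha+1}$-boundedness of $S_n$ is a spectral multiplier theorem requiring generalized Gaussian bounds for the heat semigroup (Kunstmann--Uhl), which on Lipschitz domains and on manifolds with nonnegative Ricci curvature is not the classical Mihlin theorem; likewise the Stein extension operator, Rellich--Kondrachov on the exhausting sets, and the Faber--Krahn inequality on manifolds all need to be invoked. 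Your proposal omits this entirely.
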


The present paper can be viewed as a continuation of the studies in the joint work of the author with Brzezniak and Weis in \cite{ExistencePaper} and the author's dissertation \cite{dissertationFH}. There, an existence theory for equation \eqref{ProblemStratonovich} was developed in general framework based on a variant of the Faedo-Galerkin method replacing some of the orthogonal projections in the approximating equation by means of Littlewood-Paley theory. Roughly speaking, the authors had to assume that $A$ is a Laplace-type operator in the sense that its heat semigroup has generalized Gaussian bounds, that $F$ is a subcritical nonlinearity, that the volume of $M$ is finite, and that certain Sobolev embeddings are compact. In contrast to the first two assumptions which are typical for nonlinear Schr\"odinger equations, the latter two assumptions are somehow unnatural and restrict the results in \cite{ExistencePaper} and \cite{dissertationFH} to bounded domains and compact manifolds. In particular, the most common case of $M=\Rd$ was not admissible. The reason for these restrictions is the fact that the authors needed a limit procedure in the strong topology for the nonlinear term. 

The goal of this study is a generalization of these results by dropping the assumption of finite volume and compactness of Sobolev embeddings. In view of the existing literature on the deterministic NLS (cf., e.g. \cite{Bolleyer, Cazenave}), we expect that in this paper, we derive the most general result in terms of spatial domains and differential operators that can be archieved for the stochastic NLS by approximation techniques. In Theorem \ref{mainTheoremIntro}, we only cover the most illustrative examples of the unified framework presented below. We point out that similarly to \cite{ExistencePaper}, one can also treat the fractional NLS on domains and manifolds and replacing the Laplacians by more general elliptic operators is also admissible.
Moreover, there are other somehow odd examples like the NLS on graphs or fractals. In order to keep the length of the paper reasonable, we concentrate on the examples mentioned in Theorem \ref{mainTheoremIntro}. For the general result, we refer to Theorem~\ref{MainTheorem} below. Let us sketch our argument which is based on weak limits and  mainly inspired by the deterministic results of \cite{ginibreVelo1985}, Chapter 3 in \cite{Cazenave}, and Theorem I.3.6 in \cite{Bolleyer}  and ideas from \cite{BrzezniakMotyl} for the stochastic Navier-Stokes equation.
Quite similarly to \cite{ExistencePaper}, \cite{Brzeźniak2019}, and \cite{dissertationFH} we truncate \eqref{ProblemStratonovich} by
\begin{equation}
\left\{
\begin{aligned}\label{GalerkinIntroduction}
\df u_n(t)&= \left(-\im A u_n(t)-\im P_n F\left( u_n(t)\right) \right) \df t-\im \sumM S_n[ e_m S_n u_n(t))] \circ \df W(t),\quad t>0,
\\
u_n(0)&=S_n u_0,
\end{aligned}\right.
\end{equation}
where $P_n$ is a spectral projection associated to $A$  and $S_n$
is a selfadjoint operators derived from the  Littlewood-Paley-decomposition associated to $A.$ The operators $P_n$ and $S_n$ are further illustrated in Figure 1.

\begin{figure}[h]
	\begin{minipage}{0.49\textwidth}

		\begin{tikzpicture}
		
		
		\draw[->] (-0.2,0) -- (5,0) node[anchor=north] {$\lambda$};
		\draw[->] (0,-0.2) -- (0,2) node[anchor=east] {$p_n(\lambda)$};
		
		
		\draw	(-0.2,0) node[anchor=east] {$0$}
		(-0.2,1.5) node[anchor=east] {$1$}
		(0,-0.2) node[anchor=north] {$0$}
		(2,-0.2) node[anchor=north] {$2^n$}
		(4,-0.2) node[anchor=north] {$2^{n+1}$};
		
		\draw (2,-0.1)--(2,0.1);
		\draw (4,-0.1)--(4,0.1);
		
		
		\draw [line width=1pt](0.014,1.5)--(4,1.5);
		\draw [ dashed,line width=0.5pt](4,1.5)--(4,0.002);
		\draw [line width=1pt](3.991,0)--(4.95,0);
		\end{tikzpicture}
	\end{minipage}
	\begin{minipage}{0.49\textwidth}

		\begin{tikzpicture}
		
		\draw[->] (-0.2,0) -- (5,0) node[anchor=north] {$\lambda$};
		\draw[->] (0,-0.2) -- (0,2) node[anchor=east] {$s_n(\lambda)$};

		
		\draw	(-0.2,0) node[anchor=east] {$0$}
		(-0.2,1.5) node[anchor=east] {$1$}
		(0,-0.2) node[anchor=north] {$0$}
		(2,-0.2) node[anchor=north] {$2^n$}
		(4,-0.2) node[anchor=north] {$2^{n+1}$};
		\draw (2,-0.1)--(2,0.1);
		\draw (4,-0.1)--(4,0.1);
		
		
		\draw [line width=1pt](0.014,1.5)--(2,1.5);
		\draw [line width=1pt](3.991,0)--(4.99,0);
		\draw[domain=2:4, variable=\x,line width=1pt] plot (\x,{1.5*(-6*(0.5*\x-1)^5+15*(0.5*\x-1)^4-10*(0.5*\x-1)^3+1)});
		
		\end{tikzpicture}
	\end{minipage}
	\caption{Functions $p_n$ and $s_n$ with $P_n=p_n(I+A)$ and $S_n=s_n(I+A).$}
\end{figure}
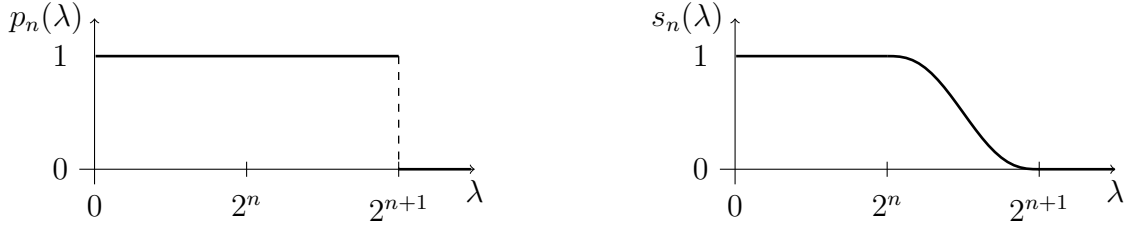

The reason for using the operators $\left(S_n\right)_{n\in\N_0}$ lies in the uniform estimate
\begin{align}\label{SnBoundedness}
\sup_{n\in\N_0}\norm{S_n}_{L^p\to L^p}<\infty, \quad 1<p<\infty,
\end{align}
needed to estimate the noise and which is false if one replaces $S_n$ by $P_n.$
 As a major difference to \cite{ExistencePaper} and \cite{dissertationFH}, observe that by the lack of compact embeddings, \eqref{GalerkinIntroduction} is not a finite dimensional stochastic differential equation but a stochastic evolution equation in the range of $P_n$ which is an infinite dimensional closed subspace of $H=L^2(M)$. However, the smoothing  properties of $P_n$ in the scale of fractional domains of $A$, i.e.
 		\begin{align}\label{smoothing}
 		\norm{P_n}_{\LinearOperatorsTwo{H}{\EA}}\le C_n \qquad\text{and}\qquad  \norm{P_n}_{\LinearOperatorsTwo{\EAdual}{H}}\le C_n
 		\end{align} 
 		for some constant $C_n\in (0,\infty)$ depending on $n\in\N_0$,
  and the symmetric structure of the noise truncation still guarantee that there is a unique global strong solution of \eqref{GalerkinIntroduction}. We remark that besides its application to the NLS, a similar construction has also  been employed in \cite{hornung2017strong} to construct a solution of a stochastic nonlinear Maxwell equation. This indicates that using operators like $S_n,$ $n\in\N_0$,  significantly increases the potential of the classical Faedo-Galerkin method.
  For further illustration, we point out that in the special case c), i.e. $M=\R^d$, $A=-\Delta$, the operators are Fourier multipliers as they can be written in the form
  \begin{align*}
  	P_n x(\xi)= \mathcal{F}^{-1} \big[p_n(\vert \cdot\vert^2) \,\mathcal{F}x\big](\xi)\qquad \text{and}\qquad S_n x(\xi)= \mathcal{F}^{-1} \big[s_n(\vert \cdot\vert^2) \,\mathcal{F}x\big](\xi)
  \end{align*}
  for $x\in L^2(\Rd)$ with the real valued functions $p_n$ and $s_n$ from Figure 1. 
  Here, the crucial estimate \eqref{SnBoundedness} is a consequence of the classical Mihlin Theorem on the boundedness of Fourier multipliers in $L^p(\Rd)$ and \eqref{smoothing} is a special case of the Bernstein inequality for functions with the Fourier transform supported in a ball. 
  
After this interlude, we would like to continue our sketch of the existence proof. A Gronwall type argument yields the uniform a priori estimates
\begin{align}\label{uniformIntroduction}
\sup_{n\in\N_0}\E \Big[\sup_{t\in[0,T]} \norm{u_n(t)}_\EA^q\Big]<\infty ,\qquad
\sup_{n\in\N_0}\E \Big[\sup_{t\in[0,T]} \norm{F(u_n(t))}_\LalphaPlusEinsDualNorm^{q}\Big]<\infty
\end{align}
for all $q\in [1,\infty)$.
Let us denote the scale of fractional domains of $A$ by $(X_s)_{s\in\N}$ and assume that that there is some $\gamma<1/2$ with $X_\gamma\hookrightarrow \LalphaPlusEins$. Note that in the applications, this assumption is typically true for subcritical nonlinearities and false for critical ones.  As in \cite{BrzezniakMotyl}, we take a Hilbert space $U^*$ which is compactly embedded in $\EA$. 
Based on \eqref{uniformIntroduction}, we then prove the tightness of the sequence $((u_n, F(u_n)))_{n\in\N_0}$ in the space 
\begin{align}
Z= C_w([0,T],\EA\times \LalphaPlusEinsDual)\cap C([0,T],X_\gamma\times \LalphaPlusEinsDual)
\end{align}
equipped with a topology inherited by strong convergence in $C([0,T],U^*\times U^*)$ and weak convergence in $\EA\times \LalphaPlusEinsDual$. 
 Jakubowski's extension of the Skorohod Theorem to nonmetric spaces from \cite{Jakubowski} yields a sequence of random variables $(v_k,\tilde{F}_k)_{k\in\N}$ on an enlarged probability space $\tilde{\Omega}$ which coincides in law with the original sequence and converges almost surely in $Z$ to $(v,\tilde{F})$. The function $v$ turns out to be a natural candidate for the global martingale solution since we prove 
 	\begin{align*}
 	v(t)= u_0+ \int_0^t \left[-\im A v(s)-\im  \tilde{F}(s)\right] \df s-\im \sumM\int_0^t e_m v(s)\circ\df \beta_m'(s)
 	\end{align*}  
for some sequence $(\beta_m')_{m\in\N}$ of independent Brownian motions on the large probability space. Finally, we prove $\tilde{F}=F(v)$ and thus, $v$ is indeed the martingale solution we were looking for.

We now classify Theorem \ref{mainTheoremIntro} into the literature on the analysis of the stochastic NLS which has catched significant attention since the first work \cite{BouardLzwei} of de Bouard and Debussche twenty years ago and particularly in the last five years. In contrast to the present article, most authors use fixed arguments based on Strichartz estimates which on the one hand, has the advantage that existence and uniqueness results can be proved simultaneously, but on the other hand, typically restricts the results to specific geometries. We refer to Barbu, R\"ockner, Zhang \cite{BarbuL2,BarbuH1} and Zhang \cite{zhang2017,zhangCritical} for wellposedness on $\Rd$ based on a rescaling transformation, de Bouard, Debussche \cite{BouardLzwei,BouardHeins},
 Fan, Xu \cite{fan2018global}, 
and the author \cite{FHornung} for existence and uniqueness on $\Rd$ via estimates for the stochastic convolution, and to Cheung, Mosincat \cite{cheung2019stochastic} and Brzezniak, Millet \cite{BrzezniakStrichartz} for similar results on the torus and general compact manifolds, respectively. 
We emphasize that our approach is tailormade for the general setting and we would like to admit that our procedure is therefore not the natural one to deal with $M=\Rd$ as we neither use  dispersive behavior of the Schr\"odinger group on $\Rd$ nor the full strength of harmonic analysis in this case. Despite this deficit, it is worth mentioning that even in the $\Rd$-setting, our existence results extends the literature. On the one hand, we reduce the regularity and decay assumptions on the noise coefficients in \cite{BarbuH1} and on the other hand, we treat the full range of subcritical nonlinearties which was not possible in the results from \cite{BouardHeins} and \cite{dissertationFH} for noise with low regularity. 
We further point out that beyond wellposedness results, the rescaling approach of Barbu, R\"ockner, and Zhang has been succesfully applied
to investigate blow-up and scattering \cite{BarbuNoBlowUp,HerrScattering} and control problems \cite{barbu2018optimal,zhang2019optimal}.    
To complement the existence result from \cite{ExistencePaper}, Brzezniak, Weis, and the author showed pathwise uniqueness on 3D compact manifolds, see \cite{brzezniak2018uniqueness} and also \cite{dissertationFH} for results in 2D. Approximation techniques have also been used by Keller and Lisei \cite{keller2015variational, lisei2016stochastic}. In contrast to \cite{ExistencePaper} and Theorem \ref{mainTheoremIntro}, however, their results are restricted to 1D bounded domains.
Let us point out that in  every article mentioned so far, the noise is considered to be white in time, but not in space. Very recently, other authors, most notably Gubinelli, Ugurcan, and Zachhuber \cite{gubinelli2018semilinear}, Debussche, Weber \cite{debussche2018schrodinger}, and Debussche, Martin \cite{debussche2019solution},  also constructed solutions to the NLS with purely spatial white noise combined with renormalization.
 Furthermore, we would like to mention existence results for jump noise by Brzezniak, Manna, and the author in \cite{Brzeźniak2019} and by de Bouard, Hausenblas, and Ondrejat in \cite{de2019nonlinear,de2019uniqueness}.

The paper is organized as follows. After  formulating the assumptions in Section 2, we concentrate on the wellposedness and the uniform estimates for the truncated equation in Section 3. 
In Section 4, a tightness criterion is derived which helps to pass to the limit in Section 5. In Section 6, we  finally, we present some examples which are covered by the general framework and thereby, we prove Theorem \ref{mainTheoremIntro}.

\subsection*{Notational remarks}

Before we start with the mathematical content of the paper, let us briefly recall some notations which will be frequently used.

\begin{itemize}
	\item We consider a finite time horizon $T>0$ and we assume that $\left(\Omega,\F,\Filtration, \Prob\right)$ is a \emph{filtered probability space with the usual conditions}.
	\item If $(A,\mathcal{A})$ is a measurable space and $X:\Omega\to A$ is a random variable, then the law of $X$ on $A$ is denoted by $\Prob^X.$ 
	\item If functions $a,b\ge 0$ satisfy the inequality $a\le C(A) b$ with a constant $C(A)>0$ depending on an expression $A$, we write $a \lesssim b$ and sometimes $a\lesssim_A\,b$ if the dependence on $A$ shall be highlighted. Given $a\lesssim b$ and $b\lesssim a,$ we write $a\eqsim b.$
	\item For two   Banach spaces $E,F$ over $\K\in\{\R,\C\},$ we denote by $\mathcal{L}(E,F)$ the space of linear bounded operators $B: E\to F$ and abbreviate $\mathcal{L}(E):=\mathcal{L}(E,E)$ as well as $E^*:=\LinearOperatorsTwo{E}{\K}.$  We write 
	\begin{align*}
	\duality{x}{x^*}:=x^*(x),\qquad x\in E,\quad x^*\in E^*.
	\end{align*}
\end{itemize}
\section{The Setting}

	Let $\left(X,\Sigma,\mu\right)$ be a $\sigma$-finite measure space with metric $\rho$ satisfying the \emph{doubling property}, i.e. $\mu(B(x,r))<\infty$ for all $x\in X$ and $r>0$ and
	\begin{align}\label{doubling}
	\mu(B(x,2r))\lesssim \mu(B(x,r)).
	\end{align}
	 Let $M\subset X$ be an open subset and denote $H=L^2(M).$ The standard complex inner product on $H$ is denoted by 
	 \begin{align*}
	 \skpH{u}{v}=\int_M u \bar{v}\,\mu(\df x), \qquad u,v\in H. 
	 \end{align*}
	 Let $A$ be a $\C$-linear non-negative selfadjoint operator on ${H}$ with domain $\mathcal{D}(A)$.
	 	By $X_\theta$, $\theta\in\R$, we denote the scale of fractional domains of $A$, i.e. $X_\theta:=\D((\Id+A)^\theta)$ for $\theta\ge 0$ equipped with the norm
	 				\begin{align*}
	 				\norm{x}_\theta:=\norm{(\Id+A)^\theta x}_{H},\qquad x\in X_\theta,
	 				\end{align*}	
	 				and  $X_{-\theta}$, $\theta\ge 0$,  is the completion of $H$ with respect to the norm
	 				\begin{align*}
	 				\norm{x}_{-\theta}:=\norm{(\Id+A)^{-\theta}x}_{H},\qquad x\in H.
	 				\end{align*}
%
%
%
	 In the context of the NLS, it is necessary  that all our function spaces consist of $\C$-valued functions. However, in view of the stochastic integration theory and the compactness results we will present in Section \ref{CompactnessSection}, it is more convenient to interpret these spaces as real Hilbert or Banach spaces. Hence, we often treat $H$  as real a Hilbert space with the inner product $\Real (u,v)_H$ for $u,v\in H.$ Obviously, the real and the complex inner product induce the same norms and hence, both spaces are topologically  equivalent. 
	 The  Hilbert space $\EA:=X_{1/2}$ with
	 \begin{align*}
	 \skp{u}{v}_\EA:= \skpH{\left(\Id+A\right)^\frac{1}{2}u}{\left(\Id+A\right)^\frac{1}{2}v},\qquad u,v\in \EA,
	 \end{align*}
	 is called the \emph{energy space} \index{energy space} and $\norm{\cdot}_{E_A}$ the  \emph{energy norm} \index{energy norm}associated to $A.$ We further use the notation  $\EAdual:=X_{-\frac{1}{2}}$ which is justified since it is a classical result that $X_{1/2}$ and $X_{-1/2}$ are dual spaces.
	 We remark that $\left(E_A, H, E_A^*\right)$ is a Gelfand triple, i.e.
	 \begin{align*}
	 E_A\hookrightarrow H \cong H^* \hookrightarrow E_A^*,
	 \end{align*}
	 and recall that $A_{-1/2}$, i.e., the extension of $A$ to $\EAdual$, is a non-negative selfadjoint operator on $\EAdual$ with domain $\EA$ (cf., e.g.,  Proposition A.41 in \cite{dissertationFH}). For simplicity, we also denote  $A_{-\frac{1}{2}}$ by $A.$ Similarly to $H,$ the spaces $\EA$ and $\EAdual$ can be interpreted as real Hilbert spaces.

	\begin{Assumption}\label{spaceAssumptions}
		We assume the following: 
		\begin{enumerate}[(i)]
			\item  There is a strictly positive selfadjoint operator $S$ on ${H}$  commuting with $A$ which fulfills $\mathcal{D}(S^\rho)=\mathcal{D}(A)$ for some $\rho>0$.
			 Moreover, we assume that $S$  has \emph{generalized Gaussian $(p_0,p_0')$-bounds} for some $p_0\in [1,2),$ i.e. 
			\begin{align}\label{generalizedGaussianEstimate}
			\Big\Vert \mathbf{1}_{B(x,t^\frac{1}{m})}e^{-tS}\mathbf{1}_{B(y,t^\frac{1}{m})}\Big\Vert_{\mathcal{L}(L^{p_0},L^{p_0'})} \le C{\mu(B(x,t^\frac{1}{m}))}^{\frac{1}{p_0'}-\frac{1}{p_0}} \exp \left\{-c \left(\frac{\rho(x,y)^m}{t}\right)^{\frac{1}{m-1}}\right\}
			\end{align}
			for all $t>0$ and  $(x,y)\in M\times M$ with constants $c,C>0$ and $m\ge 2.$
			\item  There is a sequence $(M_n)_{n\in\N}\subset \mathcal{B}(M)$ which satisfies that $M_1\subset M_{2}\subset\dots$, $\bigcup_{n\in\N} M_n=M$, and that for all $(u_k)_{k\in\N}\subset \EA$, $u\in \EA$ it holds that 
			\begin{align*}
				\big(u_k\rightharpoonup u\quad\text{in $\EA$ for $k\to \infty$}\big)\quad \Rightarrow \quad \big(\forall n\in\N\colon\quad u_k\to u\quad\text{in $L^{\alpha+1}(M_n)$ for $k\to \infty$}\Big).
			\end{align*}
			\item Let $\alpha \in (1,p_0'-1)$, $\gamma\in [0,1/2)$ satisfy that  $X_\gamma$ is continuously embedded in $\LalphaPlusEins.$ 
		\end{enumerate}
	\end{Assumption}	 
	 
	 We point out that item (ii) in Assumption~\ref{spaceAssumptions} is a far reaching generalization of the assumption of compactness of the embedding $\EA\hookrightarrow\LalphaPlusEins$ which was present in \cite{ExistencePaper} and \cite{dissertationFH}. In fact, this assumption is weak enough to cover Laplacians on unbounded domains, the full space $\Rd$, and some non-compact manifolds as we will show in Section~\ref{ExamplesSection}. 
	 In the following, we abbreviate the real duality in $\EA$ with $\Real\duality{\cdot}{\cdot}:= \Real\duality{\cdot}{\cdot}_{\frac{1}{2},-\frac{1}{2}}.$  Note that the duality between $\LalphaPlusEins$ and $\LalphaPlusEinsDual$ given by 
	 \begin{align*}
	 \duality{u}{v}_{L^{\alpha+1},L^\frac{\alpha+1}{\alpha}}:=\int_M u\bar{v}\,\df \mu,\qquad u\in \LalphaPlusEins,\quad v\in \LalphaPlusEinsDual,
	 \end{align*}
	 extends $\duality{\cdot}{\cdot}$  in the sense that we have
	 \begin{align*}
	 \duality{u}{v}= \duality{u}{v}_{L^{\alpha+1},L^\frac{\alpha+1}{\alpha}},\qquad u\in \EA,\quad v\in \LalphaPlusEinsDual.
	 \end{align*}
	 We now proceed by specifying the class of nonlinearities which is admissible in our framework.
\begin{Assumption}\label{nonlinearAssumptions}
	Let $\alpha\in(1,p_0'-1)$ and $(M_n)_{n\in\N}\subset \mathcal{B}(M)$ be chosen as in Assumption $\ref{spaceAssumptions}.$ Then, we assume the following:
	\begin{itemize}
		\item[i)] Let $F: \LalphaPlusEins \to \LalphaPlusEinsDual$ be a function satisfying the following estimate
		\begin{align}\label{nonlinearityEstimate}
		\norm{F(u)}_\LalphaPlusEinsDualNorm \lesssim \norm{u}_\LalphaPlusEinsKurz^\alpha,\quad u\in \LalphaPlusEins.
		\end{align}
		We further assume  $F(0)=0$ and
		\begin{align}\label{nonlinearityComplex}
		\Real \duality{\im \mathbf{1}_{M_n} u}{ F(u)}=0, \quad u\in \LalphaPlusEins, \, n\in\N.
		\end{align}
		\item[ii)] The map $F: L^{\alpha+1}(M)\to L^\frac{\alpha+1}{\alpha}(M)$ is continuously real Fr\'{e}chet differentiable with
		\begin{align}\label{deriveNonlinearBound}
		\Vert F'[u]\Vert_{\mathcal{L}(\LalphaPlusEinsKurz,\LalphaPlusEinsDualNorm)} \lesssim \norm{u}_\LalphaPlusEinsKurz^{\alpha-1}, \quad u\in \LalphaPlusEins.
		\end{align}
		\item[iii)] The map $F$ has a real antiderivative $\hat{F},$ i.e. there exists a Fr\'{e}chet-differentiable map \\ $\hat{F}: \LalphaPlusEins\to \R$ with
		\begin{align}\label{antiderivative}
		\Fhat'[u]h=\Real \duality{F(u)}{h},\quad u,h\in \LalphaPlusEins.
		\end{align}
	\end{itemize}
\end{Assumption}	 


	Note that Assumptions~\ref{spaceAssumptions} and \ref{nonlinearAssumptions} lead to $F\in C(\EA, \EAdual)$ since we have the continuous embeddings $\EA\hookrightarrow\LalphaPlusEins$ and $(\LalphaPlusEins)^*=\LalphaPlusEinsDual\hookrightarrow \EAdual$ as well as $F\in C(\LalphaPlusEins,\LalphaPlusEinsDual)$. Moreover, we will often use that  Assumption $\ref{nonlinearAssumptions}$ ii) and the mean value theorem for Fr\'{e}chet differentiable maps imply
	\begin{align*}
	\norm{  F(x)-F(y)}_{\LalphaPlusEinsDualNorm}
	&\lesssim \left(\norm{x}_\LalphaPlusEinsKurz+\norm{y}_\LalphaPlusEinsKurz\right)^{\alpha-1} \norm{x-y}_\LalphaPlusEinsKurz,	\quad x,y\in \LalphaPlusEins.	
	\end{align*}	
 We will cover the following two standard types of nonlinearities.
 \begin{Definition}\index{nonlinearity!focusing}\index{nonlinearity!defocusing}
 	Let $F$ satisfy Assumption $\ref{nonlinearAssumptions}.$
 	Then, $F$ is called \emph{defocusing}, if $\Fhat(u)\ge 0$  and \emph{focusing}, if $\Fhat(u)\le 0$ for all $u\in \LalphaPlusEins.$
 \end{Definition}	
\begin{Assumption}\label{focusing}
	We assume either i) or i'):
	\begin{itemize}
		\item[i)] Let $F$ be defocusing and satisfy
		\begin{align}\label{boundantiderivative}
		\norm{u}_\LalphaPlusEinsKurz^{\alpha+1}\lesssim \Fhat(u), \quad u\in \LalphaPlusEins.
		\end{align}
		\item[i')] Let $F$ be focusing and satisfy
		\begin{align}\label{boundantiderivativeFocusing1}
		-\Fhat(u)\lesssim\norm{u}_\LalphaPlusEinsKurz^{\alpha+1}, \quad u\in \LalphaPlusEins.
		\end{align}
		and there is $\theta \in (0,\frac{2}{\alpha+1})$ with
		\begin{align}\label{interpolationFocusing}
		\left({H},\EA\right)_{\theta,1}\hookrightarrow \LalphaPlusEins.
		\end{align}
	\end{itemize}
\end{Assumption}

Here $\left(\cdot,\cdot\right)_{\theta,1}$ denotes the real interpolation space and 
we remark that by \cite{TriebelInterpolationTheory}, Lemma 1.10.1, the embedding \eqref{interpolationFocusing} is equivalent to
\begin{align}\label{boundantiderivativeFocusing2}
\norm{u}_\LalphaPlusEinsKurz^{\alpha+1}\lesssim \norm{u}_H^{\beta_1} \norm{u}_\EA^{\beta_2},\quad u\in \EA.
\end{align}
for some $\beta_1>0$ and $\beta_2 \in (0,2)$ with $\alpha+1= \beta_1+\beta_2.$
Let us continue with the definitions and assumptions for the stochastic part.

\begin{Assumption}\label{stochasticAssumptions}
	We assume the following:
	\begin{itemize}
		\item[i)] Let $(\Omega,\F,\Prob)$ be a complete probability space, $Y$ a separable real Hilbert space with ONB $(f_m)_{m\in\N}$ and $W$ a $Y$-valued cylindrical Wiener process adapted to a filtration $\Filtration$ satisfying the usual conditions.
		\item[ii)] Let $B: {H} \to \HS(Y,{H})$ be a linear
		operator and set $B_m u:=B(u)f_m$ for $u\in {H}$ and $m\in \N.$ Additionally, we assume that $B_m\in\mathcal{L}(H)$ is selfadjoint for every $m\in\N$ with
		\begin{align}\label{noiseBoundsH}
		\sumM \norm{B_m}_{\mathcal{L}({H})}^2<\infty
		\end{align}
		and assume $B_m\in \mathcal{L}(\EA)$ and $B_m\in \mathcal{L}(\LalphaPlusEins)$ for $m\in\N$ and $\alpha\in (1,p_0'-1)$ as in Assumption and Notation \ref{spaceAssumptions} with
		\begin{align}\label{noiseBoundsEnergy}
		\sumM\norm{B_m}_{\mathcal{L}(\EA)}^2<\infty,\hspace{1cm}
		\sumM \norm{B_m}_{\mathcal{L}(L^{\alpha+1})}^2<\infty.
		\end{align}
	\end{itemize}
	
\end{Assumption}	 

Under the previous assumptions, 
we investigate the following stochastic evolution equation with the Stratonovich noise
\begin{equation}\label{ProblemStratonovich2}
\left\{
\begin{aligned}
\df u(t)&= \left(-\im A u(t)-\im  F(u(t)\right) dt-\im B u(t) \circ \df W(t),\hspace{0,3 cm} t\in (0,T),\\
u(0)&=u_0,
\end{aligned}\right.
\end{equation}
where the stochastic differential is defined by	
\begin{align}
-\im B u(t) \circ\df W(t)=-\im B u(t) \df W(t)-\frac{1}{2}\sumM B_m^2 u(t) \df t.
\end{align}
Below, we will always use the It\^o form of this equation which reads 	
\begin{equation}\label{Problem}
\left\{
\begin{aligned}
\df u(t)&= \left(-\im A u(t)-\im  F(u(t))+ \mu \left(u(t)\right) \right) \df t-\im B u(t) \df W(t),\hspace{0,3 cm} t\in (0,T),\\
u(0)&=u_0,
\end{aligned}\right.
\end{equation}
where the linear operator $\mu$ is defined by
\begin{align*}
\mu(u) := -\frac{1}{2} \sumM B_m^2 u,\qquad u\in{H}.
\end{align*}
We next introduce the concept of a martingale solution to the problem \eqref{Problem}.

\begin{Definition}\label{SolutionDefStochastics}
	Let $T>0$, $u_0\in \EA.$ Then,
	a system $\left(\Omega',\F',\Prob',W',\Filtration',u\right)$ with
	\begin{itemize}
		\item a probability space $\big({\Omega}',{\F}',{\Prob}'\big);$
		\item  a filtration ${\Filtration}'=\big({\F}'_t\big)_{t\in [0,T]}$ with the usual conditions;
		\item a $Y$-valued cylindrical Wiener ${W}'$ process on $\tilde{\Omega}$ adapted to $\tilde{\Filtration};$
		\item a process $u: \Omega'\times [0,T]\to \EA$ which is adapted and continuous in $X_\theta$ for all $\theta\in (-\infty,1/2)$ and satisfies $u\in L^\infty(0,T;\EA)$ almost surely,
	\end{itemize}
	is called  \emph{global martingale solution} of \eqref{Problem} in $\EA$ if we have 
		\begin{align}\label{equationWithPseudoNonlin}
		u(t)= u_0+ \int_0^t \left[-\im A u(s)-\im  F(u(s))+\mu(u(s))\right] \df s-\im \int_0^t B(u(s))\df W'(s)
		\end{align} 
	almost surely for all $t\in [0,T]$.
\end{Definition}

We remark that martingale solutions are also often called \emph{stochastically weak solutions}. Note that our solution concept is also somehow weak in the analytical sense, since the equation is posed in $\EAdual$ and we do not ask for continuity of the solution in $\EA$.  
\section{Uniform estimates for a truncated equation}

We first construct the truncation operators we will use to approximate \eqref{Problem}. Later, we will formulate approximative equations and show that they have unique global solutions. Uniform estimates for these solutions are then deduced as a foundation of a limiting procedure.

Let us take a function $\rho\in C_c^\infty(0,\infty)$ with $\operatorname{supp} \rho \subset [\frac{1}{2},2]$ and $\sum_{m\in\Z} \rho(2^{-m} t)=1$ for all $t>0.$ For the existence of $\rho$ with these properties, we refer to \cite{bergh1976interpolation}, Lemma 6.1.7. 
Then, we fix $n\in\N_0$ and define 
\begin{align*}
s_n: (0,\infty)\to \R,\qquad s_n(\lambda):=\sum_{m=-\infty}^{n}\rho(2^{-m}\lambda).
\end{align*}
Let $k\in \Z$ and $\lambda\in [2^{k-1},2^k).$  From $\operatorname{supp} \rho \subset [\frac{1}{2},2],$ we infer
\begin{align*}
1=\sum_{m=-\infty}^\infty \rho(2^{-m}\lambda)=\rho(2^{-(k-1)}\lambda)+\rho(2^{-k}\lambda)=\sum_{m=-\infty}^k \rho(2^{-m}\lambda).
\end{align*}
In particular
\begin{align}\label{SnMultiplier}
s_n(\lambda)=\begin{cases}
1,\hspace{2cm}   &\lambda\in (0,2^n),\\
\rho(2^{-n}\lambda), &\lambda\in [2^{n},2^{n+1}),\\
0,&\lambda\ge 2^{n+1}.\\
\end{cases}
\end{align}
Moreover, we define $p_n\colon (0,\infty)\to \R$, $n\in\N_0$, by $p_n(\lambda)=\mathbf{1}_{(0,2^{n+1})}(\lambda)$ for $\lambda\in (0,\infty)$. In Figure \ref{FigureSnPn}, we display the functions $p_n,s_n:(0,\infty)\to \R$. Now, the operator sequences $(P_n)_{n\in\N_0}$ and $(S_n)_{n\in\N_0}$ are fixed by 
\begin{align}
	P_n:=p_n(S)\qquad \text{and}\qquad S_n:=s_n(S)
\end{align}
for $n\in \N_0$ using the Borel functional calculus of $S$.

\begin{figure}[h]
	\begin{minipage}{0.49\textwidth}

		\begin{tikzpicture}
		
		
		\draw[->] (-0.2,0) -- (5,0) node[anchor=north] {$\lambda$};
		\draw[->] (0,-0.2) -- (0,2) node[anchor=east] {$p_n(\lambda)$};
		
		
		\draw	(-0.2,0) node[anchor=east] {$0$}
		(-0.2,1.5) node[anchor=east] {$1$}
		(0,-0.2) node[anchor=north] {$0$}
		(2,-0.2) node[anchor=north] {$2^n$}
		(4,-0.2) node[anchor=north] {$2^{n+1}$};
		
		\draw (2,-0.1)--(2,0.1);
		\draw (4,-0.1)--(4,0.1);
		
		
		\draw [line width=1pt](0.014,1.5)--(4,1.5);
		\draw [ dashed,line width=0.5pt](4,1.5)--(4,0.002);
		\draw [line width=1pt](3.991,0)--(4.95,0);
		\end{tikzpicture}
	\end{minipage}
	\begin{minipage}{0.49\textwidth}

		\begin{tikzpicture}
		
		\draw[->] (-0.2,0) -- (5,0) node[anchor=north] {$\lambda$};
		\draw[->] (0,-0.2) -- (0,2) node[anchor=east] {$s_n(\lambda)$};

		
		\draw	(-0.2,0) node[anchor=east] {$0$}
		(-0.2,1.5) node[anchor=east] {$1$}
		(0,-0.2) node[anchor=north] {$0$}
		(2,-0.2) node[anchor=north] {$2^n$}
		(4,-0.2) node[anchor=north] {$2^{n+1}$};
		\draw (2,-0.1)--(2,0.1);
		\draw (4,-0.1)--(4,0.1);
		
		
		\draw [line width=1pt](0.014,1.5)--(2,1.5);
		\draw [line width=1pt](3.991,0)--(4.99,0);
		\draw[domain=2:4, variable=\x,line width=1pt] plot (\x,{1.5*(-6*(0.5*\x-1)^5+15*(0.5*\x-1)^4-10*(0.5*\x-1)^3+1)});
		
		\end{tikzpicture}
	\end{minipage}
	\caption{Functions $p_n$ and $s_n$ with $P_n=p_n(S)$ and $S_n=s_n(S).$}
\end{figure} 

\begin{Lemma}\label{PnPropertiesLzwei}
	We fix $n\in\N_0.$
	\begin{enumerate}[a)]
		\item\label{PnOrthogonal} $P_n$ is an orthogonal projection in $H$. In particular, it holds that $\norm{P_n}_{\LinearOperators{H}}\le 1$.
		\item\label{RangePn}  $ H_n:= P_n(H)$ is a closed subspace of $H$ and we have $H_n\subset \EA$  and $\norm{P_n}_{\LinearOperators{\EA}}\le 1.$
		\item\label{PnExtended} $P_n$ can be extended to an operator $P_n: \EAdual\to \EAdual$ with  $P_n(\EAdual)=H_n$,
		\begin{gather}
					\norm{P_n}_{\LinearOperators{\EAdual}}\le 1,\qquad \norm{P_n}_{\LinearOperatorsTwo{H}{\EA}}\le 2^{(n+1){\nicefrac{\rho}{2}}},\qquad  \norm{P_n}_{\LinearOperatorsTwo{\EAdual}{H}}\le 2^{(n+1){\nicefrac{\rho}{2}}},
					\nonumber\\\duality{v}{P_n v}\in \R, \qquad \duality{v}{P_n w}=\skpH{P_n v}{w}, \qquad v\in \EAdual, \quad w\in H.
		\end{gather} 
		\item\label{truncationA} The restriction $A|_{H_n}$ of $A\in \LinearOperatorsTwo{\EA}{\EAdual}$ to $H_n$ is a bounded operator from $H_n$ to $H_n$ with $\norm{A|_{H_n}}_{\LinearOperators{H_n}}\le 2^{(n+1)\rho}$. 
	\end{enumerate}	
\end{Lemma}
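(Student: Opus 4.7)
The plan is to read every assertion off the Borel functional calculus of $S$, using two structural facts repeatedly: $p_n=\mathbf{1}_{(0,2^{n+1})}$ is a real bounded function with $p_n^2=p_n$, and $S$ commutes with $A$, hence with every Borel function of $A$, in particular with $(\Id+A)^{\theta}$ for all $\theta\in\R$.

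Part a) is immediate: reality of $p_n$ gives $P_n^*=P_n$ on $H$, and $p_n^2=p_n$ gives $P_n^2=P_n$, so $P_n$ is an orthogonal projection of norm at most one. For part b), $H_n=\ker(\Id-P_n)$ is closed as the kernel of a continuous operator. Since $\lambda\mapsto\lambda^k p_n(\lambda)$ is bounded by $2^{(n+1)k}$ for every $k\ge 0$, the functional calculus yields $P_n(H)\subset\D(S^k)$ for all $k\ge 0$; in particular $H_n\subset\D(S^\rho)=\D(A)\subset\EA$. Because $S$ commutes with $(\Id+A)^{1/2}$, for $u\in\EA$,
\begin{align*}
\norm{P_n u}_{\EA}=\norm{(\Id+A)^{1/2}P_n u}_H=\norm{P_n(\Id+A)^{1/2}u}_H\le\norm{u}_{\EA}.
\end{align*}

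For part c), the same commutation argument with $(\Id+A)^{-1/2}$, together with density of $H$ in $\EAdual$, gives the extension to $\EAdual$ with $\norm{P_n}_{\LinearOperators{\EAdual}}\le 1$, and $P_n(\EAdual)=H_n$ is inherited from the range behaviour on $H$. The duality identities $\duality{v}{P_n v}\in\R$ and $\duality{v}{P_n w}=\skpH{P_n v}{w}$ hold first on $H$ by selfadjointness of $P_n$ and reality of $(P_n v,v)_H$, then extend to $v\in\EAdual$ by density. The quantitative bounds $\norm{P_n}_{\LinearOperatorsTwo{H}{\EA}}\lesssim 2^{(n+1)\rho/2}$ and $\norm{P_n}_{\LinearOperatorsTwo{\EAdual}{H}}\lesssim 2^{(n+1)\rho/2}$ are where the assumption $\D(S^\rho)=\D(A)$ enters substantively: interpolating this coincidence of graph norms gives $\EA\eqsim\D(S^{\rho/2})$, so that $\norm{(\Id+A)^{1/2}P_n u}_H\lesssim\norm{P_n u}_H+\norm{S^{\rho/2}P_n u}_H$, and the functional calculus bound $\norm{S^{\rho/2}P_n}_{\LinearOperators{H}}\le 2^{(n+1)\rho/2}$ closes the first estimate. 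The second is obtained dually, after writing $\norm{P_n u}_H$ in terms of $(\Id+A)^{-1/2}$ applied to $u\in\EAdual$. For part d), the same graph-norm equivalence yields $\norm{Au}_H\lesssim\norm{u}_H+\norm{S^\rho u}_H$ for $u\in\D(A)$; applied to $u=P_n u\in H_n$, the calculus bound $\norm{S^\rho P_n}_{\LinearOperators{H}}\le 2^{(n+1)\rho}$ delivers the stated estimate.

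The main obstacle is the interpolation step in part c): one must promote the coincidence $\D(S^\rho)=\D(A)$ of full domains to a coincidence $\EA=\D((\Id+A)^{1/2})\eqsim\D(S^{\rho/2})$ with quantitative control. This is the classical complex interpolation identity $[H,\D(T)]_{1/2}=\D(T^{1/2})$ for nonnegative selfadjoint $T$, applied to both $A$ and $S^\rho$, combined with the spectral equivalence $(1+\lambda^\rho)^{1/2}\eqsim 1+\lambda^{\rho/2}$. Once this interpolation identity is in place, everything else is bookkeeping within the Borel calculus of $S$.
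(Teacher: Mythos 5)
Your proposal is correct and follows essentially the same route as the paper: both rest on the Borel functional calculus of $S$, the commutation of $S$ with $(\Id+A)^{\pm 1/2}$, the bound $\norm{S^aP_n}_{\LinearOperators{H}}\le 2^{(n+1)a}$ from $\sup_\lambda|\lambda^a p_n(\lambda)|\le 2^{(n+1)a}$, and the interpolation identity $\D(S^{\nicefrac{\rho}{2}})=\D(A^{\nicefrac{1}{2}})=\EA$ with equivalent norms, which is precisely the paper's equation \eqref{energyS}. The only differences are cosmetic (e.g.\ closedness of $H_n$ via $\ker(\Id-P_n)$ rather than a sequential argument, and reaching $H_n\subset\EA$ through $\D(S^\rho)=\D(A)$ directly).
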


\begin{proof}
	Item~\eqref{PnOrthogonal} is a direct consequence of $[p_n(\lambda)]^2 =p_n(\lambda)$ for all $\lambda\in (0,\infty)$.
	Suppose that $P_n(H)\ni y_m\to y\in H$ for $m\to\infty$. Since item~\eqref{PnOrthogonal} implies $P_n|_{H_n}=\operatorname{I}_{H_n}$ and $\norm{P_n}_{\LinearOperators{H}}\le 1$, we infer  $P_n y_m=y_m\to y$ and $P_n y_m\to P_n y$ for $m\to\infty$. This implies $y=P_n y\in P_n(H)$ and thus, $P_n(H)$ is a closed subspace of $H$.
	From $\mathcal{D}(S^\rho)=\mathcal{D}(A)$ and the injectivity of $S$ (cf.\ Assumption~\ref{spaceAssumptions}) as well as complex interpolation, we infer $\mathcal{D}(S^{\nicefrac{\rho}{2}})=\mathcal{D}(A^{\nicefrac{1}{2}})=\EA$ and
	\begin{align}\label{energyS}
		\norm{\left(\Id+A\right)^{\nicefrac{1}{2}} x}_{H}\eqsim \norm{S^{\nicefrac{\rho}{2}}x}_{H},\qquad x\in \EA.
	\end{align} 
		Next note that for all $a>0$ it holds that $(0,\infty)\ni \lambda \mapsto \lambda^a p_n(\lambda)$ is a bounded function with
		\begin{align*}
		\sup_{\lambda\in (0,\infty)}\vert\lambda^a p_n(\lambda)\vert\le 2^{(n+1)a}.
		\end{align*}
		Hence, we get $\norm{S^a P_n}_{\LinearOperators{H}}\le 2^{(n+1)a}$. This and \eqref{energyS} prove
	\begin{align*}
	\norm{P_n x}_\normEA=\norm{\left(\Id+A\right)^\frac{1}{2} P_n x}_{H}\eqsim \norm{S^{\nicefrac{\rho}{2}} P_n x}_{H}\le 2^{(n+1)\nicefrac{\rho}{2}} \norm{x}_H<\infty  ,\qquad x\in H.
	\end{align*}
	This proves $H_n=P_n(H)\subset \EA$.
	 Due to $P_n=p_n(S)$, $P_n$ commutes with  $\left(\Id+A\right)^\frac{1}{2}$ and $\left(\Id+A\right)^{-\frac{1}{2}}$ since $S$ and $A$ commute by Assumption \ref{spaceAssumptions}. We obtain
	\begin{align}\label{PnEstimateHtoEA}
	\norm{P_n x}_\normEA=\norm{\left(\Id+A\right)^\frac{1}{2} P_n x}_{H}\le \norm{\left(\Id+A\right)^\frac{1}{2} x}_{H}=\norm{x}_\normEA,\qquad x\in \EA,
	\end{align} 
	which completes the proof of item~\eqref{RangePn}.
	The contractivity of $P_n$ and \eqref{energyS} further imply
	\begin{align*}
	\norm{P_n x}_\normEAdual=\norm{\left(\Id+A\right)^{-\frac{1}{2}} P_n x}_{H}\le \norm{\left(\Id+A\right)^{-\frac{1}{2}}x}_{H}=\norm{x}_\normEAdual,\qquad x\in H,
	\end{align*}
and 
	\begin{align*}
		\norm{P_n x}_H&=\norm{\left(\Id+A\right)^\frac{1}{2} P_n \left(\Id+A\right)^{-\frac{1}{2}} x}_{H}
		\eqsim\norm{S^{\nicefrac{\rho}{2}} P_n \left(\Id+A\right)^{-\frac{1}{2}} x}_{H}
		\\&\le 2^{(n+1){\nicefrac{\rho}{2}}} \norm{\left(\Id+A\right)^{-\frac{1}{2}} x}_{H}
		=2^{(n+1){\nicefrac{\rho}{2}}} \norm{x}_{\EAdual}, \qquad x\in H.
	\end{align*}
	By \eqref{PnEstimateHtoEA} and the two previous estimates, we can extend $P_n$ to an operator $P_n: \EAdual\to \EAdual$ with 
	\begin{align*}
		\norm{P_n}_{\LinearOperators{\EAdual}}\le 1,\qquad \norm{P_n}_{\LinearOperatorsTwo{H}{\EA}}\le 2^{(n+1){\nicefrac{\rho}{2}}},\qquad \text{and}\qquad \norm{P_n}_{\LinearOperatorsTwo{\EAdual}{H}}\le 2^{(n+1){\nicefrac{\rho}{2}}}.
	\end{align*}
	To show that $P_n(\EAdual)=H_n$, we take $y\in P_n(\EAdual)$ and note that there exists $(x_k)_{k\in\N}\subset H$ with 
	$P_n x_k\to y$ in $H$ . Thus, we obtain $y\in P_n(H)=H_n$ by item~\eqref{PnOrthogonal}. This implies $P_n(\EAdual)=H_n$.  
	 For $w\in H$ and $v\in \EAdual$ with $H\ni v_k\to v$ as $k\to\infty,$ we conclude
	\begin{align*}
	\duality{v}{P_n v}=\lim_{k\to\infty}\skpH{v_k}{P_n v_k}\in\R
	\end{align*} 
	and 
	\begin{align*}
	\duality{v}{P_n w}=\lim_{k\to\infty} \skpH{v_k}{P_n w}=\lim_{k\to\infty} \skpH{P_n v_k}{ w}=\skpH{P_n v}{w}.
	\end{align*}
	This shows item~\eqref{PnExtended}.
		The fact that $A P_nx=P_n Ax$ for $x\in \EA$ shows $A|_{{P_n(\EA)}}\subset P_n(\EAdual)=H_n$ and the estimate 
		\begin{align*}
		\norm{A P_n x}_{H}\lesssim \norm{S^\rho P_n x}_H=\norm{(S^\rho P_n) P_n x}_H\le 2^{(n+1)\rho} \norm{P_n x}_H, \qquad x\in H,
		\end{align*}
		proves $A|_{H_n}$ maps $H_n$ to $H_n$ boundedly. 
\end{proof}

Below, the operator $A|_{H_n}$ will typically also be denoted by $A$. We continue with the properties of the operators $S_n$, $n\in\N_0$.

\begin{Lemma}\label{PaleyLittlewoodLemma} 
	\begin{enumerate}[a)]
		\item\label{SnLzwei} For each $n\in\N_0$ the operator $S_n\colon H\to H$ is bounded with $\norm{S_n}_{{\mathcal{L}(H)}}\le 1$, selfadjoint, and satisfies $S_n(H)\subset H_n$. 
		\item\label{SnEnergy} For each $n\in\N_0$ the operator $S_n$ maps $\EA$ into itself and satisfies $\norm{S_n}_{\mathcal{L}(\EA)}\le 1$.
		\item\label{SnLp} For each $n\in\N_0$ the operator $S_n$ can be extended to a bounded operator $S_n\colon \LalphaPlusEins\to \LalphaPlusEins$ such that $\sup_{m\in\N_0} \norm{S_m}_{\mathcal{L}(L^{\alpha+1})}<\infty$.
		\end{enumerate}
\end{Lemma}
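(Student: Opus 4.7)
\smallskip
\noindent\textit{Proof plan.} The plan is to handle parts \eqref{SnLzwei} and \eqref{SnEnergy} directly from the Borel functional calculus of $S$, and to reduce part \eqref{SnLp} to a standard spectral multiplier theorem for operators with generalized Gaussian bounds.

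For \eqref{SnLzwei}, I would first note that in the construction of Lemma~6.1.7 in \cite{bergh1976interpolation}, the dyadic bump $\rho$ can be chosen nonnegative, and then the partition-of-unity identity forces $0\le s_n(\lambda)\le 1$ for all $\lambda>0$. Hence $s_n$ is a bounded real-valued Borel function on the spectrum of $S$, and the functional calculus of the selfadjoint operator $S$ immediately yields a selfadjoint $S_n$ on $H$ with $\norm{S_n}_{\mathcal{L}(H)}\le\norm{s_n}_\infty\le 1$. For the inclusion $S_n(H)\subset H_n$, I would exploit the pointwise identity $p_n\,s_n=s_n$, which follows from $\operatorname{supp}s_n\subset(0,2^{n+1})$ together with $p_n\equiv 1$ on that set; the multiplicativity of the functional calculus then gives $P_nS_n=S_n$, and hence $S_n(H)=P_n(S_n(H))\subset P_n(H)=H_n$ by Lemma~\ref{PnPropertiesLzwei}\,\eqref{RangePn}.

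For \eqref{SnEnergy}, the key observation is that $S$ commutes with $A$ by Assumption~\ref{spaceAssumptions}, hence also with $(\Id+A)^{1/2}$, and therefore so does $S_n=s_n(S)$. For $x\in\EA$ this gives
\begin{align*}
\norm{S_nx}_{\EA}=\norm{(\Id+A)^{1/2}S_nx}_H=\norm{S_n(\Id+A)^{1/2}x}_H\le\norm{(\Id+A)^{1/2}x}_H=\norm{x}_{\EA},
\end{align*}
where the inequality uses \eqref{SnLzwei}.

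For \eqref{SnLp}, the essential input is the generalized Gaussian $(p_0,p_0')$-bound on $S$ from Assumption~\ref{spaceAssumptions}. I would first record the dyadic self-similarity $s_n(\lambda)=s_0(2^{-n}\lambda)$, where $s_0$ is the fixed function in $C^\infty(0,\infty)$ with $s_0\equiv 1$ on $(0,1)$ and $s_0\equiv 0$ on $[2,\infty)$. The chain rule then gives $\lambda^k s_n^{(k)}(\lambda)=\mu^k s_0^{(k)}(\mu)\big|_{\mu=2^{-n}\lambda}$, so the family $\{s_n\}_{n\in\N_0}$ satisfies Mihlin-type derivative estimates uniformly in $n$. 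Since $\alpha\in(1,p_0'-1)$ the target exponent $\alpha+1$ lies in $(p_0,p_0')$, and a spectral multiplier theorem for selfadjoint operators with generalized Gaussian bounds (in the spirit of Blunck--Kunstmann and Duong--Ouhabaz--Sikora, as applied in \cite{ExistencePaper}) then supplies the uniform bound $\sup_{n\in\N_0}\norm{S_n}_{\mathcal{L}(L^{\alpha+1})}<\infty$. I expect the main obstacle here to lie in matching the precise regularity hypothesis of the cited multiplier theorem to our $s_0$, but this should be routine: $s_0-\mathbf{1}_{(0,1]}$ is smooth with compact support in $(0,\infty)$, and all Mihlin-type seminorms of $s_0$ are finite.
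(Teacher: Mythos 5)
Your proposal is correct and follows essentially the same route as the paper: parts a) and b) via the Borel functional calculus of $S$ (using $p_n s_n=s_n$ and the commutation of $S$ with $A$), and part c) by verifying a Mihlin condition uniformly in $n$ and invoking a spectral multiplier theorem under generalized Gaussian bounds (the paper cites Kunstmann--Uhl). The only cosmetic difference is that you obtain the uniform derivative bounds from the scaling identity $s_n(\lambda)=s_0(2^{-n}\lambda)$, whereas the paper computes $\sup_{\lambda\in[2^n,2^{n+1})}\vert\lambda^k\frac{\df^k}{\df\lambda^k}\rho(2^{-n}\lambda)\vert\le 2^k\norm{\rho^{(k)}}_\infty$ directly.
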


\begin{proof}
	\emph{Step 1.}
Since $s_n$ is real-valued and bounded by $1,$ the operator $S_n$ is selfadjoint and bounded with $\norm{S_n}_\LinearOperators{H}\le 1.$
The fact that for all $\lambda\in (0,\infty)$ we have $ [p_n(\lambda)] [s_n(\lambda)]=s_n(\lambda)$ ensures $ P_n S_n x=S_nx$ for all $x\in H$ and therefore, we get $S_n(H)\subset P_n(H)=H_n$.
Next observe that item~\eqref{SnEnergy} is a direct consequence of the assumption that 
 $S$ and $A$ commute.
	
	\emph{Step 2.} Next, we show item~\eqref{SnLp} based on a spectral multiplier theorem by Kunstmann and Uhl, \cite{kunstmannUhl}, for operators with generalized Gaussian bounds.
	In view of Theorem 5.3 in \cite{kunstmannUhl}, Lemma 2.19 and Fact 2.20 in \cite{Uhl}, it is sufficient to show that $s_n$ satisfies the Mihlin condition
	\begin{align}\label{MihlinFn}
	\sup_{\lambda>0} \vert \lambda^k s_n^{(k)}(\lambda)\vert\le C_k,\qquad k=0,\dots, \gamma,
	\end{align}
	for some $\gamma\in \N$ uniformly in $n\in\N_0.$  This  can be verified by the calculation
	\begin{align*}
	\sup_{\lambda>0} \vert \lambda^k s_n^{(k)}(\lambda)\vert=&\sup_{\lambda\in [2^{n},2^{n+1})} \vert \lambda^k s_n^{(k)}(\lambda)\vert=\sup_{\lambda\in [2^{n},2^{n+1})} \left\vert \lambda^k \frac{\df^k }{\df \lambda^k}\rho(2^{-n}\lambda)\right\vert\le 2^k \norm{\rho^{(k)}}_\infty
	\end{align*}
	for all $k\in \N_0.$ 
\end{proof}	

After having established some nice properties of each $P_n$ and $S_n$ for $n\in\N_0$, we continue with the limiting behaviour of these operators as $n\to \infty.$

\begin{Lemma}\label{convergenceProperty}
	Let $\theta\in (0,\infty)$ and $x\in X_\theta$. Then it holds that 
	\begin{align*}
		\norm{(I+A)^\theta (P_nx-x)}_H\to 0\qquad \text{and}\qquad \norm{(I+A)^\theta (S_n x-x)}_H\to 0.
	\end{align*}
\end{Lemma}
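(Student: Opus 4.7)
The plan is to reduce both assertions to the case $\theta=0$ by exploiting that bounded Borel functions of $S$ commute with Borel functions of $A$, and then to conclude by bounded convergence in the Borel functional calculus of $S$.

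First I would invoke Assumption~\ref{spaceAssumptions}~(i), which states that $S$ and $A$ commute. Via the joint spectral resolution of two commuting selfadjoint operators, this upgrades to the commutation of every bounded Borel function of $S$ with every Borel function of $A$; in particular $P_n=p_n(S)$ and $S_n=s_n(S)$ commute with $(I+A)^\theta$ on $X_\theta$. Both sides of the resulting identity are well defined, since by Lemma~\ref{PnPropertiesLzwei} the range $H_n$ is contained in $X_\theta$ for every $\theta>0$, and $s_n$ has the same bounded support structure as $p_n$. Given $x\in X_\theta$ and setting $y:=(I+A)^\theta x\in H$, this yields
\begin{align*}
(I+A)^\theta(P_n x-x) = P_n y-y,\qquad (I+A)^\theta(S_n x-x) = S_n y-y,
\end{align*}
so it suffices to prove $P_n y\to y$ and $S_n y\to y$ in $H$ for every $y\in H$.

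Second, I would appeal to the bounded convergence theorem in the Borel functional calculus of $S$. The multipliers $p_n,s_n\colon(0,\infty)\to[0,1]$ are uniformly bounded by $1$, and for each fixed $\lambda>0$ both $p_n(\lambda)$ and $s_n(\lambda)$ equal $1$ as soon as $n$ is large enough (namely $2^{n+1}>\lambda$ for $p_n$ and $2^n>\lambda$ for $s_n$). Since $S$ is strictly positive and selfadjoint, its spectral measure puts no mass on $\{0\}$, so dominated convergence in the spectral integral representation of the functional calculus gives $p_n(S)y\to y$ and $s_n(S)y\to y$ in $H$ for every $y\in H$.

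The only mildly delicate point is the commutation claim in the first step; once that is granted, the convergence reduces to a standard application of the spectral theorem, and I do not expect any further obstacle.
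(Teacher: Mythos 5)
Your proposal is correct and follows essentially the same route as the paper: pointwise convergence $p_n(\lambda),s_n(\lambda)\to 1$ with uniform bound $1$, dominated convergence in the spectral calculus of $S$ to get $P_ny\to y$ and $S_ny\to y$ in $H$, and commutation of $(I+A)^\theta$ with $P_n$ and $S_n$ to lift the result to $X_\theta$. The extra care you take with the joint spectral resolution and the absence of spectral mass at $0$ only makes explicit what the paper leaves implicit.
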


\begin{proof}
	Observe that for all $\lambda\in (0,\infty)$ it holds that $p_n(\lambda)\to 1$ and $s_n(\lambda)\to 1$ as well as $\vert p_n(\lambda)\vert \le 1$ and $\vert s_n(\lambda)\vert \le 1$.
	By the selfadjoint functional calculus of $S$, we thus get 
	\begin{align*}
		\norm{ (P_ny-y)}_H\to 0\qquad \text{and}\qquad \norm{ (S_n y-y)}_H\to 0
	\end{align*}
	for $y\in H$. The fact $(I+A)^\theta$ commutes with both $P_n$ and $S_n$ proves the assertion.
\end{proof}

	Using the operators $P_n$ and $S_n,$ $n\in\N_0,$ we approximate our original problem $\eqref{ProblemStratonovich}$ by the stochastic evolution equation in $H_n$ given by	
	\begin{equation*}
	\left\{
	\begin{aligned}
	\df u_n(t)&= \left(-\im A u_n(t)-\im P_n F\left( u_n(t)\right) \right) \df t-\im  S_n B(S_n u_n(t)) \circ \df W(t),
	\\
	u_n(0)&=S_n u_0.
	\end{aligned}\right.
	\end{equation*}
	We emphasize that the truncation of the initial value by $S_n$ could also be replaced by $P_n$, which has been used in \cite{ExistencePaper}. We choose $S_n$ here since this makes the energy estimates  slightly more natural in the defocusing case, see equation \eqref{energySn} below.  
	With the Stratonovich correction term
	\begin{align*}
	\mu_n := -\frac{1}{2} \sumM \left(S_n B_m S_n\right)^2,
	\end{align*}
	the approximated problem can also be written in the It\^o form 	
	\begin{equation}\label{galerkinEquation}
	\left\{
	\begin{aligned}
	\df u_n(t)&= \left(-\im A u_n(t)-i P_n F\left(  u_n(t)\right)+ \mu_n \left(u_n(t)\right) \right) \df t-\im  S_n B (S_n u_n(t)) \df W(t),
	\\
	u_n(0)&=S_n u_0.
	\end{aligned}\right.
	\end{equation}
	We remark that by the mapping properties of the operators $P_n$ and $S_n$ (cf.~Lemma~\ref{PnPropertiesLzwei} and Lemma~\ref{PaleyLittlewoodLemma}), \eqref{galerkinEquation} can be viewed as a stochastic differential equation in the Hilbert space $H_n=P_n(H)$ equipped with the scalar product inherited from $H$.  
	First, we are concerned with a local wellposedness result for $\eqref{galerkinEquation}.$ Note that this is not as simple as in \cite{ExistencePaper}, where the compactness of the embedding $\EA\hookrightarrow \LalphaPlusEins$ and the finite volume of $M$ ensured that $H_n$ was finite dimensional. The regularizing properties of $P_n$ from Lemma~\ref{PnPropertiesLzwei}, however, compensate this and ensure the local Lipschitz property of the truncated nonlinearity in the $H$-norm.

\begin{Prop}\label{localSolutionGalerkin}
	Fix $n\in\N_0.$ Then there is a unique maximal solution $\left(u_n,\left(\tau_{n,k}\right)_{k\in\N},\tau_n\right)$ of $\eqref{galerkinEquation}$ with continuous paths in $H_n,$ i.e. there is an increasing sequence $\left(\tau_{n,k}\right)_{k\in\N}$ of stopping times with $\tau_n=\sup_{k\in\N} \tau_{n,k}$
	and
	\begin{align}\label{integralFormulationGalerkin}
	u_n(t)= S_n u_0&+ \int_0^t \big[-\im A u_n(s)-\im P_n F\left( u_n(s)\right)+\mu_n(u_n(s))\big] \df s- \im \int_0^t S_n B (S_n u_n(s)) \df W(s)
	\end{align}
	almost surely on $\{t\le \tau_{n,k}\}$ for all $k\in \N.$
	Moreover, we have the blow-up criterion
	\begin{align}\label{BlowUpTime}
	\Prob( \tau_{n,k}<T\quad \forall k\in\N,\quad \sup_{t\in [0,\tau_n)}\norm{u_n(t)}_{H}<\infty)=0.
	\end{align}
\end{Prop}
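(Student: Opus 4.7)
The plan is to view \eqref{galerkinEquation} as a stochastic evolution equation in the separable real Hilbert space $H_n=P_n(H)$ equipped with the inner product inherited from $H$, and to invoke the standard existence/uniqueness theory for SDEs with locally Lipschitz coefficients on Hilbert spaces (truncation together with a Banach fixed-point argument on each stopped interval). The central observation is that, despite $H_n$ being infinite-dimensional in general, all relevant norms on $H_n$ are equivalent with constants depending only on $n$: from $P_n|_{H_n}=I_{H_n}$ and Lemma~\ref{PnPropertiesLzwei}~\eqref{PnExtended} we read off $\norm{u}_\EA \le 2^{(n+1)\rho/2}\norm{u}_H$ for $u\in H_n$, and combining this with the embedding $\EA\hookrightarrow X_\gamma\hookrightarrow\LalphaPlusEins$ from Assumption~\ref{spaceAssumptions}~(iii) also yields $\norm{u}_\LalphaPlusEinsKurz \lesssim_n \norm{u}_H$; the reverse inequalities are automatic from the Gelfand triple.

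Using this equivalence I verify that every coefficient of \eqref{galerkinEquation} is (locally) Lipschitz on $H_n$ in the $H$-norm. The map $u\mapsto -\im A u$ is a bounded linear operator on $H_n$ by Lemma~\ref{PnPropertiesLzwei}~\eqref{truncationA}. For the nonlinear drift, combining $\norm{P_n}_{\EAdual\to H}\le 2^{(n+1)\rho/2}$ with the embedding $\LalphaPlusEinsDual\hookrightarrow\EAdual$ and the local Lipschitz bound on $F\colon \LalphaPlusEins\to \LalphaPlusEinsDual$ (a consequence of Assumption~\ref{nonlinearAssumptions}~ii) and the mean value theorem stated right below it) gives, for $u,v\in H_n$,
\begin{align*}
\norm{P_n F(u)-P_n F(v)}_H \lesssim_n \big(\norm{u}_H+\norm{v}_H\big)^{\alpha-1}\norm{u-v}_H.
\end{align*}
The diffusion $u\mapsto -\im S_n B(S_n u)$ is linear, takes values in $\HS(Y,H_n)$ since $S_n(H)\subset H_n$ by Lemma~\ref{PaleyLittlewoodLemma}~\eqref{SnLzwei}, and is globally Lipschitz because
\begin{align*}
\sum_{m=1}^\infty \norm{S_n B_m S_n u}_H^2 \le \Big(\sum_{m=1}^\infty \norm{B_m}_{\mathcal{L}(H)}^2\Big)\norm{u}_H^2,
\end{align*}
which is finite by Assumption~\ref{stochasticAssumptions}~ii). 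Finally, $\mu_n$ is a bounded linear operator on $H$ and therefore on $H_n$.

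With these Lipschitz properties established, the classical iteration argument produces, for each $k\in\N$ and starting from $S_n u_0\in H_n$, a unique solution on the stochastic interval $[0,\tau_{n,k}]$, where $\tau_{n,k}:=\inf\{t\in[0,T]:\norm{u_n(t)}_H\ge k\}\wedge T$. Gluing these solutions consistently yields the maximal pair $(u_n,\tau_n)$ with $\tau_n=\sup_k\tau_{n,k}$ satisfying \eqref{integralFormulationGalerkin}. The blow-up criterion \eqref{BlowUpTime} then follows from maximality: on the event $\{\tau_{n,k}<T\ \forall k\}\cap\{\sup_{t<\tau_n}\norm{u_n(t)}_H<\infty\}$ the path of $u_n$ stays inside a fixed ball of $H_n$ up to $\tau_n$, so the locally Lipschitz theory would permit strictly extending the solution past $\tau_n$, a contradiction.

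The main obstacle is conceptual rather than technical: one must reconcile oneself to working in the infinite-dimensional closed subspace $H_n$ instead of a finite-dimensional Galerkin space as in \cite{ExistencePaper}. The smoothing property $P_n\colon\EAdual\to H_n$ from Lemma~\ref{PnPropertiesLzwei}~\eqref{PnExtended} makes the truncated nonlinearity $P_n F$ Lipschitz in the $H$-norm on bounded balls of $H_n$, and once this is recognised no further analytical difficulty arises; in particular no dispersive or Strichartz estimates are needed at this approximate level.
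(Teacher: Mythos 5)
Your proposal is correct and follows essentially the same route as the paper: reduce the claim to the standard locally-Lipschitz existence theory for SDEs in the Hilbert space $H_n$, with the key point being that the smoothing bounds $\norm{P_n}_{\LinearOperatorsTwo{\EAdual}{H}},\norm{P_n}_{\LinearOperatorsTwo{H}{\EA}}\le 2^{(n+1)\rho/2}$ from Lemma~\ref{PnPropertiesLzwei} together with $\EA\hookrightarrow\LalphaPlusEins$ make $P_nF$ Lipschitz on balls of $H_n$ in the $H$-norm, while $A|_{H_n}$, $\mu_n$ and the diffusion are bounded (the latter globally Lipschitz via $\sumM\norm{B_m}_{\mathcal{L}(H)}^2<\infty$). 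Your chain of estimates for $P_nF$ and your treatment of the maximal solution and blow-up criterion match the paper's argument.
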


\begin{proof}
	The assertion follows if we can show that the functions 
	$f_n: H_n \rightarrow H_n$ and $\sigma_n: H_n \rightarrow \HS(Y,H_n)$ defined by
	\begin{align*}
	f_n(x):=&-\im A x-i P_n F(x)+ \mu_n \left(x\right),\qquad 
	\sigma_n(x):=-\im  S_n B(S_n x),\qquad x\in H_n,
	\end{align*}
	are Lipschitz on balls in $H_n.$ Given $R>0$ and $x,y\in H_n$ with $\norm{x}_{H}\le R$ and $\norm{y}_{H}\le R$, we estimate
	\begin{align*}
	\norm{\mu_n(x)-\mu_n(y)}_H &\le \bigNorm{\sumM \left(S_n B_m S_n\right)^2 (x-y)}_{H}\\
	&\le \sumM \Vert B_m\Vert_{\LinearOperators{H}}^2 \Vert S_n\Vert_{\LinearOperators{H}}^4 \norm{x-y}_{H}
	\lesssim \norm{x-y}_H
	\end{align*}
	where we used \eqref{noiseBoundsEnergy}.
	The regularizing properties of $P_n$ from Lemma~\ref{PnPropertiesLzwei}, the local Lipschitz estimate of $F$ and the fact that $x=P_nx$ and $y=P_ny$ yield
	\begin{align*}
	&\norm{P_n F(x)-P_n F(y)}_{H}
	\lesssim_n \norm{ F(x)- F(y)}_{\EAdual}\lesssim			\norm{F(x)-F(y)}_{\LalphaPlusEinsDualNorm}\\
	&\lesssim 	  \left(\norm{x}_{L^{\alpha+1}}+\norm{y}_{L^{\alpha+1}}\right)^{\alpha-1} \norm{x-y}_{L^{\alpha+1}}	
	\lesssim  	  \left(\norm{x}_{\EA}+\norm{y}_{\EA}\right)^{\alpha-1} \norm{x-y}_{\EA}
	\\&=\left(\norm{P_n x}_{\EA}+\norm{P_n y}_{\EA}\right)^{\alpha-1} \norm{P_n(x-y)}_{\EA}
	\lesssim_n  	  \left(\norm{x}_{H}+\norm{y}_{H}\right)^{\alpha-1} \norm{x-y}_{H}
	\\&\le 	  2^{\alpha-1} R^{\alpha-1} \norm{x-y}_{H}.
	\end{align*}		
	Using also item~\eqref{truncationA} in Lemma~\ref{PnPropertiesLzwei}, we deduce
	\begin{align*}
	\norm{f_n(x)-f_n(y)}_{H}\lesssim_{n,R} \norm{x-y}_{H}.			
	\end{align*}
	 From \eqref{noiseBoundsEnergy}, we infer
	\begin{align*}
	\norm{\sigma_n(x)-\sigma_n(y)}_{\HS(Y,H_n)}^2&=\sumM \norm{S_nB_m S_n (x-y)}_{H}^2\\
	&\le \left(\sumM \norm{ B_m}_{\LinearOperators{H}}^2 \right) \Vert S_n\Vert_{\LinearOperators{H}}^4 \norm{ x-y}_{H}^2\lesssim \norm{x-y}_{H}^2.
	\end{align*}
\end{proof}

The following shows that the local solutions of the truncated equations in fact exist for all times due to mass conservation. We skip the proof since it is identical with the analogous result in \cite{ExistencePaper}, Proposition 5.4. 

\begin{Prop}\label{MassEstimateGalerkinSolution}
	For each $n\in\N_0,$ there is a unique global solution $u_n$ of $\eqref{galerkinEquation}$ with continuous paths in $H_n$ and we have the estimate
	\begin{align}\label{LzweiEstimate}
	\norm{u_n(t)}_{H}=\norm{S_n u_0}_{H}\le \norm{u_0}_{H}
	\end{align}
	almost surely for all $t\in [0,T].$
\end{Prop}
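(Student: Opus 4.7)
The plan is to use the local solution constructed in Proposition~\ref{localSolutionGalerkin} and close the argument via an a priori estimate on the $H$-norm, exploiting that the full truncated equation~\eqref{galerkinEquation} is the It\^o formulation of an equation whose Stratonovich formulation preserves the $L^2$-mass pathwise. Combined with the blow-up criterion~\eqref{BlowUpTime}, this will force $\tau_n = T$ almost surely.

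First, for each $k\in\N$, apply the It\^o formula to $t\mapsto \norm{u_n(t\wedge \tau_{n,k})}_H^2$ using \eqref{integralFormulationGalerkin}. Using that $(u_n,\mu_n(u_n),P_nF(u_n),S_nB(S_nu_n))$ all take values in $H_n\subset H$ and that the regularizing properties of $P_n$ (Lemma~\ref{PnPropertiesLzwei}) make all coefficients $H$-valued, this is justified. The resulting identity has four contributions, and the strategy is to show that each of them vanishes:
\begin{itemize}
\item The linear part gives $2\Real\skpH{u_n}{-\im A u_n}$. Since $A$ is selfadjoint on $H_n$ (Lemma~\ref{PnPropertiesLzwei}\eqref{truncationA}), $\skpH{u_n}{Au_n}\in\R$, and multiplication by $-\im$ kills the real part.
\item The nonlinear drift gives $2\Real\skpH{u_n}{-\im P_n F(u_n)}$. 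Using $P_nu_n=u_n$ and selfadjointness of $P_n$, this equals $2\Real\duality{-\im u_n}{F(u_n)}$. Assumption~\eqref{nonlinearityComplex} together with $M_n\uparrow M$ from Assumption~\ref{spaceAssumptions}(ii) and the continuity of $F\colon \LalphaPlusEins\to \LalphaPlusEinsDual$ gives $\Real\duality{\im u}{F(u)}=0$ after passing to the limit (using dominated convergence $\mathbf{1}_{M_n}u\to u$ in $\LalphaPlusEins$).
\item The Stratonovich correction contributes $2\Real\skpH{u_n}{\mu_n(u_n)}=-\sumM \Real\skpH{u_n}{(S_nB_mS_n)^2 u_n}$. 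Since each $S_nB_mS_n$ is selfadjoint (as $S_n$ and $B_m$ are selfadjoint on $H$), this equals $-\sumM \norm{S_nB_mS_n u_n}_H^2$, which exactly cancels the quadratic variation term $\sumM\norm{S_nB_mS_n u_n}_H^2$ arising from the It\^o correction.
\item The martingale part is $-2\Real\skpH{u_n}{\im S_nB(S_nu_n)\,\df W}$. Its $m$-th component is proportional to $\Real\bigl(-\im\skpH{u_n}{S_nB_mS_n u_n}\bigr)$; since $S_nB_mS_n$ is selfadjoint, $\skpH{u_n}{S_nB_mS_n u_n}\in\R$, so the whole quantity is purely imaginary and its real part vanishes.
\end{itemize}

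Thus $\norm{u_n(t\wedge \tau_{n,k})}_H^2 = \norm{S_n u_0}_H^2$ almost surely for every $t\in[0,T]$ and every $k\in\N$. In particular $\sup_{t\in[0,\tau_n)}\norm{u_n(t)}_H=\norm{S_nu_0}_H<\infty$ almost surely, which together with the blow-up criterion~\eqref{BlowUpTime} forces $\tau_{n,k}\to T$ almost surely, so $\tau_n=T$ almost surely and $u_n$ is a global solution. Sending $k\to\infty$ in the identity above and using $\norm{S_n}_{\LinearOperators{H}}\le 1$ (Lemma~\ref{PaleyLittlewoodLemma}\eqref{SnLzwei}) yields~\eqref{LzweiEstimate}.

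The main obstacle is the bookkeeping that establishes the exact cancellation between the Stratonovich correction term and the It\^o quadratic variation, which relies crucially on the symmetric placement of $S_n$ around $B_m$ in the noise truncation (so that $(S_nB_mS_n)^2$ matches $\sumM\norm{S_nB_mS_n\cdot}_H^2$ rather than a mixed expression). The nonlinear part is comparatively easy once~\eqref{nonlinearityComplex} is invoked correctly, and the rest is standard.
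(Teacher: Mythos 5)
Your proof is correct and follows exactly the standard argument that the paper itself omits, deferring to Proposition 5.4 of \cite{ExistencePaper}: It\^o's formula for $\norm{\cdot}_H^2$ on the $H_n$-valued process, cancellation of the drift and martingale contributions by selfadjointness and by \eqref{nonlinearityComplex} (after letting $M_n\uparrow M$), exact cancellation of the Stratonovich correction against the It\^o quadratic variation thanks to the symmetric truncation $S_nB_mS_n$, and globality via the blow-up criterion \eqref{BlowUpTime} together with $\norm{S_n}_{\LinearOperators{H}}\le 1$. No gaps.
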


	\begin{Definition}
		We define the energy $\energy(u)$ of $u\in \EA$ by
		\begin{align*}
		\energy(u):= \frac{1}{2} \Vert A^{\frac{1}{2}} u \Vert_{H}^2+\Fhat(u),\qquad u\in \EA.
		\end{align*}	
	\end{Definition}
	
Besides the $L^2$-conservation, the main structural feature of the deterministic NLS is the energy conservation. The stochastic noise, however, destroys this property. Fortunately, though, it is possible to generalize the energy conservation by a Gronwall type argument.
In order to use energy estimates to deduce uniform estimates in $\EA$, we distinguish between defocusing and focusing nonlinearities.
In fact, we proved the same  estimates already in \cite{ExistencePaper} and (with nonlinear noise and a slightly different truncated equation) in \cite{dissertationFH} without using the additional assumption on the compactness of the embedding $\EA\hookrightarrow \LalphaPlusEins$ which was only needed in different parts of the existence proofs there. However, we repeat the argument at least in the case of a defocusing nonlinearity in order to convince the reader that only the properties of $P_n$ and $S_n$ as stated in   Lemma~\ref{PnPropertiesLzwei} and in Lemma~\ref{PaleyLittlewoodLemma} are used.
	
	\begin{Prop}\label{EstimatesGalerkinSolutionDefocusing}
		Under Assumption \ref{focusing} i), the following assertions hold:
		\begin{enumerate}
			\item[a)] For all $q\in [1,\infty)$ there is  
			$C>0$ with
			\begin{align}\label{defocusingEnergyEstimate}
			\sup_{n\in\N_0}\E \Big[\sup_{t\in[0,T]} \left[\norm{u_n(t)}_{H}^2+\energy(u_n(t))\right]^q\Big]\le C.
			\end{align}
			In particular, for all $r\in[1,\infty)$ there is 
			$C_1>0$ with
			\begin{align}\label{defocusingHoneEstimate}
			\sup_{n\in\N_0}\E \Big[\sup_{t\in[0,T]} \norm{u_n(t)}_\EA^{r}\Big]\le C_1.
			\end{align}
			\item[b)] For all $r\in[1,\infty)$ there is 
			$C_2>0$ with
							\begin{align}\label{defocusingNonlinearityEstimate}
							\sup_{n\in\N_0}\E \Big[\sup_{t\in[0,T]} \norm{F(u_n(t))}_{\LalphaPlusEinsDualNorm}^{r}\Big]\le C_2.
							\end{align}
		\end{enumerate}
	\end{Prop}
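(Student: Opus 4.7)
The plan is an Itô-type energy estimate on the functional $\Phi(u):=\tfrac12\Vert A^{1/2}u\Vert_H^2 +\Fhat(u)=\energy(u)$, combined with Burkholder--Davis--Gundy and Gronwall. Since \eqref{galerkinEquation} lives in the (not finite dimensional, but for fixed $n$ ``nice'') Hilbert space $H_n\subset\EA$ with coefficients that are locally Lipschitz in the $H_n$-norm (as in the proof of Proposition~\ref{localSolutionGalerkin}), and since $\Fhat$ is $C^1$ with derivative given by \eqref{antiderivative}, an application of Itô's formula to $\Phi(u_n)$ is justified along the lines of the finite-dimensional proof in \cite{ExistencePaper}; the only non-quadratic contribution comes from $\Fhat$, and its chain rule can either be extracted from Assumption \ref{nonlinearAssumptions}(ii)--(iii) directly or obtained through mollification.

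For the drift, the key cancellations are $\Real\skpH{A^{1/2}u_n}{A^{1/2}(-\im A u_n)}=-\Real\skpH{Au_n}{\im Au_n}=0$ and $\Real\skpH{Au_n}{-\im P_nF(u_n)}+\Real\duality{F(u_n)}{-\im Au_n}=0$, which hold because $P_n$ is the orthogonal projection onto $H_n$ and commutes with $A$ (Lemma~\ref{PnPropertiesLzwei}), so $P_nAu_n=Au_n$, and because multiplication by $\im$ is skew-adjoint. For the stochastic contribution, the Itô correction coming from differentiating $\tfrac12\Vert A^{1/2}u_n\Vert_H^2$ against $-\im S_nB(S_nu_n)\,\df W$ is $\tfrac12\sum_m\Vert A^{1/2}S_nB_mS_nu_n\Vert_H^2\,\df t$, and this is exactly cancelled by the drift contribution $\Real\skpH{Au_n}{\mu_n(u_n)}$, using self-adjointness of $B_m$ and $S_n$ on $H$, the commutation $A^{1/2}S_n=S_nA^{1/2}$ (both are functions of $S$ by Assumption~\ref{spaceAssumptions}), and the identity $\Vert A^{1/2}S_nB_mS_nu_n\Vert_H^2=\Real\skpH{(S_nB_mS_n)^2u_n}{Au_n}$. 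The corresponding cancellation also holds for the $\tfrac12\Vert u_n\Vert_H^2$-part (which is already handled exactly by Proposition~\ref{MassEstimateGalerkinSolution}).

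What survives of the noise are (i) a martingale $M(t)$ of quadratic variation bounded by $\int_0^t\Vert u_n(s)\Vert_\EA^2\,\df s$ via \eqref{noiseBoundsEnergy}, and (ii) the second-order correction from $\Fhat$, estimated via \eqref{deriveNonlinearBound}, the uniform $\LalphaPlusEins$-boundedness of $S_n$ (Lemma~\ref{PaleyLittlewoodLemma}\eqref{SnLp}), \eqref{noiseBoundsEnergy}, and \eqref{boundantiderivative} by
\[
\tfrac12\sumM\bigl|\Fhat''[u_n][S_nB_mS_nu_n,S_nB_mS_nu_n]\bigr|
\lesssim \sumM\Vert B_m\Vert_{\mathcal{L}(\LalphaPlusEins)}^2\Vert u_n\Vert_{\LalphaPlusEins}^{\alpha+1}\lesssim \Fhat(u_n).
\]
Collecting everything and using \eqref{boundantiderivative} to bound $\Vert u_n\Vert_\EA^2+\Vert u_n\Vert_{\LalphaPlusEins}^{\alpha+1}\lesssim \Vert u_0\Vert_H^2+\Phi(u_n)$, together with the mass conservation from Proposition~\ref{MassEstimateGalerkinSolution}, yields
\[
\Phi(u_n(t))\le \Phi(S_nu_0)+C\int_0^t\bigl(1+\Phi(u_n(s))\bigr)\df s+M(t).
\]
Taking $\sup_{t\le T}$, raising to the $q$-th power, applying BDG to $M$, absorbing the resulting $\tfrac12\E\sup_t\Phi(u_n(t))^q$ into the left-hand side, and invoking the Gronwall lemma gives \eqref{defocusingEnergyEstimate}. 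Since $\Vert u_n\Vert_\EA^2\lesssim \Vert u_0\Vert_H^2+\energy(u_n)$, this immediately implies \eqref{defocusingHoneEstimate}. For part (b), one simply combines \eqref{nonlinearityEstimate} with \eqref{boundantiderivative} to obtain $\Vert F(u_n)\Vert_{\LalphaPlusEinsDualNorm}^{(\alpha+1)/\alpha}\lesssim \Vert u_n\Vert_{\LalphaPlusEins}^{\alpha+1}\lesssim \Fhat(u_n)\le \energy(u_n)$, so \eqref{defocusingNonlinearityEstimate} follows from part (a).

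The main obstacle is the bookkeeping in the Itô expansion: one must verify that the Stratonovich correction $\mu_n$, which is built with $S_n$ rather than the identity, \emph{exactly} cancels the Itô correction on the $A^{1/2}$-quadratic form. This is the reason for building $\mu_n$ symmetrically as $-\tfrac12\sum_m(S_nB_mS_n)^2$ and for insisting that $S_n$ commute with $A$; without this structural compatibility one would pick up an uncontrolled $\EA$-term from the noise. A minor technical point is the Itô formula for $\Fhat$, which is only Fréchet-$C^1$; this is resolved exactly as in Proposition~5.6 of \cite{ExistencePaper}, and does not use any compactness of the embedding $\EA\hookrightarrow\LalphaPlusEins$.
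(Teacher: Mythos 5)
Your overall scheme (It\^o formula for $\norm{u_n}_H^2+\energy(u_n)$, cancellation of the first-order drift terms, BDG plus Gronwall, then part (b) from \eqref{nonlinearityEstimate} and \eqref{boundantiderivative}) is the one the paper uses, and the cosmetic difference of raising a pathwise inequality to the $q$-th power instead of applying It\^o a second time with $\varPhi(x)=(x+\delta)^q$ is harmless. But the step you single out as ``the main obstacle'' is resolved incorrectly: the It\^o correction $\tfrac12\sum_m\Vert A^{1/2}S_nB_mS_nu_n\Vert_H^2$ is \emph{not} exactly cancelled by $\Real\skpH{Au_n}{\mu_n(u_n)}$. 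The identity $\Vert A^{1/2}S_nB_mS_nu\Vert_H^2=\Real\skpH{(S_nB_mS_n)^2u}{Au}$ that you invoke requires commuting $A$ past $B_m$, and $A$ does not commute with $B_m$ (multiplication by $e_m$ does not commute with the Laplacian). The discrepancy between the two terms is $\tfrac12\sum_m\Real\skpH{S_n[A,B_m]S_nu}{\,S_nB_mS_nu}_{\text{(suitably interpreted)}}$, and already for $A=-\Delta$, $B_m=e_m\cdot$, $S_n=\Id$ this equals $\tfrac12\sum_m\int|\nabla e_m|^2|u|^2\,\df x\neq 0$ --- this is precisely the well-known non-conservation of the $\dot H^1$-energy under linear multiplicative Stratonovich noise. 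The exact cancellation you describe holds only for the $\Vert\cdot\Vert_H^2$ part (which is why mass is conserved), because there the relevant quantity is $\Real\skpH{v}{[A,B_m]v}$ with the \emph{same} vector on both sides, which vanishes by selfadjointness; for the gradient part the two slots carry different vectors and the real part survives.

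The gap is repairable, and the repair is exactly what the paper does: do not cancel, but estimate the two terms separately, as in \eqref{energyEstimateDrei} and \eqref{energyEstimateFunf}, each by $C\,Y(s)\,\Vert B_m\Vert_{\mathcal{L}(\EA)}^2$ with $Y(s)=\delta+\norm{u_n(s)}_H^2+\energy(u_n(s))$; summability over $m$ then follows from \eqref{noiseBoundsEnergy} and the terms are absorbed by Gronwall. This also corrects your reading of why the truncation is built as $S_nB_mS_n$: the symmetric structure is needed for the mass identity and for the pairing estimates, not to produce an exact cancellation in the energy. A second, minor imprecision: the quadratic variation of the surviving martingale is controlled by $\int_0^tY(s)^2\,\df s$ (quadratic in the energy), not by $\int_0^t\Vert u_n(s)\Vert_\EA^2\,\df s$; this is why the $\varepsilon$-absorption after BDG, which you do perform, is genuinely needed.
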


	\begin{proof}
It\^o's formula and Proposition $\ref{MassEstimateGalerkinSolution}$ lead to the identity
		\begin{align}\label{ItoEnergyStart}
		\norm{u_n(t)}_{H}^2+\energy\left(u_n(t)\right)=&\norm{S_n u_0}_{H}^2+\energy\left(S_n u_0\right)
		\nonumber\\
		&+ \int_0^t\Real \duality{A u_n(s)+F(u_n(s))}{ -\im A u_n(s)-\im P_n F(u_n(s))}\df s\nonumber \\
		&+\int_0^t
		\Real \duality{A u_n(s)+F(u_n(s))}{ \mu_n(u_n(s))} \df s\nonumber \\
		&+\int_0^t \Real \duality{A u_n(s)+F(u_n(s))}{ -\im S_n B \left(S_n u_n(s)\right)\df W(s)}\nonumber \\
		&+\frac{1}{2}\sumM \int_0^t  \Vert \sqrtA S_n B_m S_n u_n(s)\Vert_{H}^2\df s\nonumber\\
		&+\frac{1}{2} \int_0^t \sumM \Real \duality{F'[u_n(s)] \left(S_n B_m S_n u_n(s)\right)}{ S_n B_m S_n u_n(s)} \df s
		\end{align}
		almost surely for all $t\in [0,T].$
		Lemma~\ref{PnPropertiesLzwei} c) \& d) now proves for all $v\in H_n$ that
		\begin{align*}
		\Real \duality{F(v)}{ -\im P_n F(v)}=\Real \left[\im \duality{F(v)}{  P_n F(v)}\right]=0;
		\end{align*}
		\begin{align*}
		\Real \left[\duality{A v}{ -\im P_n F(v)}+\duality{F(v)}{ -\im A v}\right]
		&=\Real \left[-\duality{A v}{ \im F(v)}+\overline{\duality{  A v}{\im F(v)}}\right]=0;
		\end{align*}
		\begin{align*}
		\Real \skpH{A v}{ -\im A v}=\Real \left[\im \norm{A v}_{H}^2\right]=0.
		\end{align*}
		 These identities simplify $\eqref{ItoEnergyStart}$ and we get				
		\begin{align}\label{ItoEnergyWithoutExponent}
		\norm{u_n(t)}_{H}^2+\energy\left(u_n(t)\right)=&\norm{S_n u_0}_{H}^2+\energy\left(S_n u_0\right)
		+\int_0^t \Real \duality{A u_n(s)+F(u_n(s))}{ \mu_n(u_n(s))} \df s\nonumber \\
		&+\int_0^t \Real \duality{A u_n(s)+F(u_n(s))}{ -\im S_n B \left(S_n u_n(s)\right)\df W(s)}\nonumber \\
		&+\frac{1}{2}\sumM \int_0^t  \Vert \sqrtA S_n B_m S_n u_n(s)\Vert_{H}^2\df s\nonumber\\
		&+\frac{1}{2}\int_0^t \sumM \Real \duality{F'[u_n(s)] \left(S_n B_m S_n u_n(s)\right)}{ S_n B_m S_n u_n(s)} \df s
		\end{align}	
		almost surely for all $t\in [0,T].$ 												
		Next, we fix $\delta>0,$ $q>1$ and apply \dela{It\^{o}'s}{the It\^o} formula to the process on the LHS of $\eqref{ItoEnergyWithoutExponent}$ and the function $\varPhi: (-\frac{\delta}{2},\infty)\to \R$ defined by $\varPhi(x):=\left(x+\delta\right)^q.$ 
		With the short notation
		\begin{align*}
		Y(s):=\delta+\norm{u_n(s)}_{H}^2+\energy\left(u_n(s)\right),\qquad s\in [0,T],
		\end{align*}
		we obtain
		\begin{align}\label{ItoEnergy}
		Y(t)^{q}=&\left[\delta+\norm{S_n u_0}_{H}^2+\energy\left(S_n u_0\right)\right]^q
		+q\int_0^t Y(s)^{q-1} \Real \duality{A u_n(s)+F(u_n(s))}{ \mu_n(u_n(s))} \df s\nonumber \\
		&+q\int_0^t Y(s)^{q-1} \Real \duality{A u_n(s)+F(u_n(s))}{ -\im S_n B \left(S_n u_n(s)\right)\df W(s)}\nonumber \\
		&+\frac{q}{2}\sumM \int_0^t  Y(s)^{q-1} \Vert \sqrtA S_n B_m S_n u_n(s)\Vert_{H}^2\df s\nonumber\\
		&+\frac{q}{2} \sumM \int_0^t  Y(s)^{q-1} \Real \duality{F'[u_n(s)] \left(S_n B_m S_n u_n(s)\right)}{ S_n B_m S_n u_n(s)} \df s\nonumber\\
		&+\frac{q}{2}(q-1)\sumM \int_0^t Y(s)^{q-2} \left[\Real \duality{A u_n(s)+F(u_n(s))}{-\im S_n B_m S_n u_n(s)}\right]^2 \df s
		\end{align}					
		almost surely for all $t\in [0,T].$
		In order to treat the stochastic integral, we  use Lemma  $\ref{PaleyLittlewoodLemma}$ and Proposition $\ref{MassEstimateGalerkinSolution}$ to estimate for fixed $s\in [0,T]$
		\begin{align}\label{energyEstimateEins}
		\vert\skpH{A u_n(s)}{-\im   S_n B_m S_n u_n(s)}\vert
		&\le \Vert \sqrtA u_n(s)\Vert_{H} \Vert \sqrtA S_n B_m S_n u_n(s) \Vert_{H}\nonumber\\
		&\le \Vert \sqrtA u_n(s)\Vert_{H} \Vert   S_n B_m S_n u_n(s)\Vert_\EA\nonumber\\
		&\le \Vert \sqrtA u_n(s) \Vert_{H} \norm{S_n}_{\mathcal{L}(\EA)}^2\Vert B_m\Vert_{{\mathcal{L}(\EA)}} \Vert  u_n(s)\Vert_\EA\nonumber\\
		&\le \left( \norm{u_n(s)}_{H}^2+\Vert \sqrtA u_n(s) \Vert_{H}^2\right) \Vert B_m\Vert_{{\mathcal{L}(\EA)}}\nonumber\\
		&\lesssim Y(s) \Vert B_m\Vert_{{\mathcal{L}(\EA)}}
		\end{align}
		and $\eqref{nonlinearityEstimate},$ $\eqref{boundantiderivative}$ and Lemma $\ref{PaleyLittlewoodLemma}$ to estimate
		\begin{align}\label{energyEstimatezwei}
		\vert \duality{F(u_n(s))}{ -\im S_n B_m S_n u_n(s) }\vert
		&\le \Vert F(u_n(s))\Vert_{\LalphaPlusEinsDualNorm} \Vert  S_n B_m S_n u_n(s) \Vert_\LalphaPlusEinsKurz\nonumber\\
		&\le   \Vert u_n(s)\Vert_\LalphaPlusEinsKurz^{\alpha+1}  \Vert S_n\Vert_{\mathcal{L}(L^{\alpha+1})}^2 \Vert B_m\Vert_{\mathcal{L}(L^{\alpha+1})}\nonumber\\
		&\lesssim \Fhat(u_n(s))   \Vert B_m\Vert_{\mathcal{L}(L^{\alpha+1})}
		\nonumber\\
		&\lesssim Y(s)   \Vert B_m\Vert_{\mathcal{L}(L^{\alpha+1})}.
		\end{align}
		The Burkholder-Gundy-Davis inequality, the estimates $\eqref{energyEstimateEins}$ and $\eqref{energyEstimatezwei},$ Assumption $\ref{stochasticAssumptions}$ and Lemma 5.6 in \cite{ExistencePaper}  prove for any $\varepsilon>0$
		\begin{align}\label{burkholderEstimate1}
		\E \Big[ &\sup_{s\in[0,t]} \left\vert \int_0^s Y(r)^{q-1}\Real \duality{A u_n(r)+F(u_n(r))}{-\im S_n B  \left(S_n u_n(r)\right) \df W(r)}\right\vert\Big]\nonumber\\
		&\lesssim\E \Big[  \left(\int_0^t \sumM \left\vert Y(r)^{q-1} \Real \duality{A u_n(r)+F(u_n(r))}{-\im S_n B_m S_n u_n(r)}\right\vert^2 \df r\right)^{\frac{1}{2}}\Big]\nonumber\\
		&\lesssim \E \Bigg[  \Bigg(\int_0^t Y(r)^{2q}  \df r\Bigg)^{\frac{1}{2}}\Bigg]	
		\le \varepsilon \E\Big[ \sup_{s\in[0,{t}]} Y(s)^q\Big]+ \frac{1}{4 \varepsilon} \int_0^t \E \Big[\sup_{r\in[0,s]} Y(r)^q\Big] \df s.		
		\end{align}
		To estimate the integrands of the deterministic integrals,  we fix $s\in [0,T]$ and get
		\begin{align}\label{energyEstimateDrei}
		\Real \skpH{A u_n(s)}{ \left(S_n B_m S_n \right)^2 u_n(s)}
		&\le \Vert \sqrtA u_n(s)\Vert_{H} \Vert \sqrtA \left(S_n B_m S_n \right)^2 u_n(s)\Vert_{H}\nonumber\\
		&\le \Vert \sqrtA u_n(s)\Vert_{H} \Vert  \left(S_n B_m S_n\right)^2 u_n(s)\Vert_\EA\nonumber\\
		&\le \Vert \sqrtA u_n(s) \Vert_{H} \Vert S_n\Vert_{{\mathcal{L}(\EA)}}^4 \Vert B_m\Vert_{{\mathcal{L}(\EA)}}^2 \Vert  u_n(s)\Vert_\EA\nonumber\\
		&\le \left(\Vert u_n(s) \Vert_{H}^2+\Vert \sqrtA u_n(s) \Vert_{H}^2\right) \Vert B_m\Vert_{{\mathcal{L}(\EA)}}^2\nonumber\\
		&\lesssim Y(s) \Vert B_m\Vert_{{\mathcal{L}(\EA)}}^2; 
		\end{align}
		\begin{align}\label{energyEstimateVier}
		\Real \duality{F(u_n(s))}{ \left(S_n B_m S_n \right)^2 u_n(s)}
		&\le \Vert F(u_n(s))\Vert_{\LalphaPlusEinsDualNorm} \Vert \left(S_n B_m S_n\right)^2 u_n(s) \Vert_\LalphaPlusEinsKurz\nonumber\\
		&\lesssim  \Vert u_n(s)\Vert_\LalphaPlusEinsKurz^{\alpha+1} \Vert S_n\Vert_{\mathcal{L}(L^{\alpha+1})}^4 \Vert B_m\Vert_{\mathcal{L}(L^{\alpha+1})}^2\nonumber\\
		&\lesssim \Fhat(u_n(s))   \Vert B_m\Vert_{\mathcal{L}(L^{\alpha+1})}^2
		\lesssim Y(s) \Vert B_m\Vert_{\mathcal{L}(L^{\alpha+1})}^2;
		\end{align}				
		\begin{align}\label{energyEstimateFunf}
		\Vert \sqrtA S_n B_m S_n u_n(s)\Vert_{H}^2
		&\le\Vert  S_n B_m S_n u_n(s)\Vert_\EA^2
		\le \Vert S_n\Vert_{{\mathcal{L}(\EA)}}^4 \Vert B_m\Vert_{{\mathcal{L}(\EA)}}^2 \Vert u_n(s)\Vert_\EA^2\nonumber\\
		&\le \Vert B_m\Vert_{{\mathcal{L}(\EA)}}^2 \left(\Vert  u_n(s) \Vert_{H}^2+\Vert \sqrtA u_n(s)\Vert_{H}^2\right)\nonumber\\
		&\lesssim \Vert B_m\Vert_{{\mathcal{L}(\EA)}}^2 Y(s)
		\end{align}
		for $m\in\N$ and $s\in[0,T].$
		Moreover, note that
		\begin{align}\label{energyEstimateSechs}
		\Real \duality{F'[u_n(s)] \left(S_n B_m S_n u_n(s)\right)}{ S_n B_m S_n u_n(s)}
		&\lesssim \norm{F'[u_n(s)]}_{\LinearOperatorsTwo{\LalphaPlusEinsKurz}{\LalphaPlusEinsDualNorm}}  \Vert S_n B_m S_n  u_n(s)\Vert_\LalphaPlusEinsKurz^2\nonumber\\
		&\lesssim  \Vert u_n(s)\Vert_\LalphaPlusEinsKurz^{\alpha+1}  \Vert S_n\Vert_{\mathcal{L}(L^{\alpha+1})}^4 \Vert B_m\Vert_{\mathcal{L}(L^{\alpha+1})}^2\nonumber\\
		&\lesssim \Fhat(u_n(s))   \Vert B_m\Vert_{\mathcal{L}(L^{\alpha+1})}^2
		\lesssim Y(s)   \Vert B_m\Vert_{\mathcal{L}(L^{\alpha+1})}^2.
		\end{align}
		Substituting the inequalities $\eqref{burkholderEstimate1}$ to $\eqref{energyEstimateSechs},$   into the identity $\eqref{ItoEnergy},$ we get for each $t\in [0,T]$
		\begin{align}\label{GronwallInequality}
		\E \big[\sup_{s\in[0,t]}Y(s)^q\big]\lesssim_q& \left[\delta+\norm{S_n u_0}_{H}^2+\energy(S_n u_0)\right]^q		
		+\E \int_0^t  \sumM \norm{B_m}_{{\mathcal{L}(\EA)}}^2 Y(s)^q\df s\nonumber\\
		&+\E \int_0^t \sumM\norm{B_m}_{{\mathcal{L}(L^{\alpha+1})}}^2Y(s)^q \df s \nonumber\\
		&+\varepsilon \E  \Big[ \sup_{r\in[0,t]}Y(s)^q\Big] +\frac{1}{4 \varepsilon} \int_0^t \E\Big[\sup_{s\in[0,r]}  Y(s)^q\Big]  \df r\nonumber\\
		&+\E \sumM \int_0^t  \norm{B_m}_{{\mathcal{L}(\EA)}}^2 Y(s)^q \df s + \E \int_0^t\sumM\norm{B_m}_{{\mathcal{L}(L^{\alpha+1})}}^2Y(s)^q \df s\nonumber\\
		&+\E \int_{0}^t Y(s)^{q} \sumM \max\{\norm{B_m}_{{\mathcal{L}(\EA)}}^2,\norm{B_m}_{\mathcal{L}(L^{\alpha+1})}^2\} \df s\nonumber\\
		\lesssim& \left[\delta+\norm{u_0}_{H}^2+\energy(S_n u_0)\right]^q		
		+\E \int_0^t   Y(s)^q\df s\nonumber\\
		&+\varepsilon \E  \Big[ \sup_{r\in[0,t]}Y(s)^q\Big] +\frac{1}{4 \varepsilon} \int_0^t \E\Big[\sup_{s\in[0,r]}  Y(s)^q\Big]  \df r\nonumber\\
		\lesssim_T& \left[\delta+\norm{u_0}_{H}^2+\energy(S_n u_0)\right]^q
		+\varepsilon \E  \Big[ \sup_{r\in[0,t]}Y(s)^q\Big] + \int_0^t \E \Big[\sup_{s\in[0,r]} Y(s)^q\Big]  \df r.
		\end{align}
		Choosing $\varepsilon>0$ small enough in inequality $\eqref{GronwallInequality}$, the Gronwall lemma yields
		\begin{align*}
		\E \big[\sup_{s\in[0,t]}Y(s)^q\big]\le C\left[\delta+\norm{u_0}_{H}^2+\energy(S_n u_0)\right]^q e^{C t},\qquad t\in [0,T],
		\end{align*}
		with a constant $C>0$, which is uniform in $n\in\N_0.$ Since we are in the defocusing case, we obtain
		\begin{align}\label{energySn}
		\energy(S_n u_0)\lesssim \norm{\sqrtA S_n u_0}_{H}^2+\norm{S_n u_0}_\LalphaPlusEinsKurz^{\alpha+1}\lesssim \norm{\sqrtA u_0}_{H}^2+\norm{ u_0}_\LalphaPlusEinsKurz^{\alpha+1}\lesssim \energy(u_0),
		\end{align}
		and thus, we have proved the estimate \eqref{defocusingEnergyEstimate}. Since the assumption that $F$ is defocusing, i.e. $\Fhat(w)\ge 0$ for all $w\in\EA$, implies
		\begin{align*}
			\norm{w}_\EA^2\lesssim \norm{w}_{H}^2+\energy(w),\qquad w\in \EA,
		\end{align*}
		it follows that we also have \eqref{defocusingHoneEstimate}. To prove b), we employ Assumption~\ref{nonlinearAssumptions} and \eqref{defocusingHoneEstimate} to deduce
		\begin{align*}
		\sup_{n\in\N_0}\E \Big[\sup_{t\in[0,T]} \norm{F(u_n(t))}_{\LalphaPlusEinsDualNorm}^{r}\Big]
		\lesssim \sup_{n\in\N_0}\E \Big[\sup_{t\in[0,T]}  \norm{u_n(t)}_\LalphaPlusEinsKurz^{\alpha r}\Big]
		\lesssim \sup_{n\in\N_0}\E \Big[\sup_{t\in[0,T]}  \norm{u_n(t)}_\EA^{\alpha r}\Big]
		\lesssim 1.
		\end{align*}
	\end{proof}

As announced above, we skip the proof of the following result in the focusing case since it is similar to Proposition 5.8 in \cite{ExistencePaper}.
	
		\begin{Prop}\label{EstimatesGalerkinSolutionFocusing}
			Under Assumption $\ref{focusing}$ i'), the following assertions hold:
			\begin{enumerate}
				\item[a)]
				For all $r\in[1,\infty)$ there is 
$C_1>0$ with
				\begin{align*}
				\sup_{n\in\N_0}\E \Big[\sup_{t\in[0,T]} \norm{u_n(t)}_\EA^{r}\Big]\le C_1.
				\end{align*}
			\item[b)] For all $r\in[1,\infty)$ there is 
$C_2>0$ with
			\begin{align*}
			\sup_{n\in\N_0}\E \Big[\sup_{t\in[0,T]} \norm{F(u_n(t))}_{\LalphaPlusEinsDualNorm}^{r}\Big]\le C_2.
			\end{align*}
			\end{enumerate}
		\end{Prop}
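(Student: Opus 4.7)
The plan is to mimic the defocusing argument of Proposition \ref{EstimatesGalerkinSolutionDefocusing} as far as possible, and only modify the parts where the sign of $\Fhat$ was used. First observe that the It\^o identity \eqref{ItoEnergyWithoutExponent} for $\norm{u_n(t)}_{H}^2+\energy(u_n(t))$ was derived without using defocusing; in particular it remains valid under Assumption \ref{focusing} i'). The crucial new ingredient is the combination of the mass conservation \eqref{LzweiEstimate} with the interpolation \eqref{boundantiderivativeFocusing2}: since $\beta_2\in(0,2)$ and $\norm{u_n(t)}_H\le \norm{u_0}_H$, Young's inequality yields, for some constant $K=K(\norm{u_0}_H)>0$,
\begin{align}\label{PlanFocusingCoercivity}
-\Fhat(u_n(t))\lesssim \norm{u_n(t)}_{L^{\alpha+1}}^{\alpha+1}\lesssim \norm{u_0}_H^{\beta_1}\norm{u_n(t)}_{\EA}^{\beta_2}\le \tfrac{1}{4}\norm{A^{1/2}u_n(t)}_H^2+K,
\end{align}
hence $\norm{u_n(t)}_{\EA}^2\le K'+ 4\,\energy(u_n(t))$ for a possibly larger constant $K'=K'(\norm{u_0}_H)$. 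This coercivity replaces the direct bound $\Fhat\ge 0$ of the defocusing case.

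Next, choose a shift constant $C_*=C_*(\norm{u_0}_H)$ so large that
\begin{align*}
Y(t):=C_*+\norm{u_n(t)}_{H}^2+\energy(u_n(t))\ge 1+\tfrac{1}{8}\norm{u_n(t)}_{\EA}^2\ge 1
\end{align*}
uniformly in $n$ and $t$. Applying It\^o's formula to $\Phi(x)=x^q$ on $(0,\infty)$ (which is admissible since $Y>0$) gives an analogue of \eqref{ItoEnergy} with $Y$ in place of the shifted quantity used before. All integrand estimates \eqref{energyEstimateEins}--\eqref{energyEstimateSechs} carry over word for word once we re-prove the controls involving $\Fhat$: on the right-hand sides of \eqref{energyEstimatezwei}, \eqref{energyEstimateVier}, and \eqref{energyEstimateSechs} the defocusing step $\norm{u_n}_{L^{\alpha+1}}^{\alpha+1}\lesssim \Fhat(u_n)\lesssim Y$ is replaced by the interpolation estimate combined with mass conservation and the coercivity bound above:
\begin{align*}
\norm{u_n(s)}_{L^{\alpha+1}}^{\alpha+1}\lesssim \norm{u_0}_H^{\beta_1}\norm{u_n(s)}_{\EA}^{\beta_2}\lesssim 1+\norm{u_n(s)}_{\EA}^2\lesssim Y(s).
\end{align*}
The rest of the argument --- the Burkholder--Davis--Gundy bound \eqref{burkholderEstimate1}, the absorption of $\varepsilon\,\E\sup_{[0,t]} Y^q$ into the left-hand side, and Gronwall's lemma --- is unchanged and yields
\begin{align*}
\sup_{n\in\N_0}\E\Big[\sup_{t\in[0,T]} Y(t)^q\Big]\le C(q,T,\norm{u_0}_{\EA})<\infty,
\end{align*}
where finiteness of the initial data term uses $\energy(S_n u_0)\lesssim \norm{A^{1/2}S_n u_0}_H^2+\norm{S_n u_0}_{L^{\alpha+1}}^{\alpha+1}\lesssim 1+\norm{u_0}_{\EA}^2$ from Lemma \ref{PaleyLittlewoodLemma}. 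Recalling $Y\ge \tfrac{1}{8}\norm{u_n}_{\EA}^2$ gives a).

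For b), Assumption \ref{nonlinearAssumptions} i) provides $\norm{F(u_n(t))}_{\LalphaPlusEinsDualNorm}\lesssim \norm{u_n(t)}_{\LalphaPlusEinsKurz}^{\alpha}\lesssim \norm{u_n(t)}_{\EA}^{\alpha}$ via the continuous embedding $\EA\hookrightarrow \LalphaPlusEins$ (consequence of Assumption \ref{spaceAssumptions} iii) and interpolation), so b) follows from a) applied with $r$ replaced by $\alpha r$. The main obstacle is verifying the coercivity bound \eqref{PlanFocusingCoercivity} with the correct dependence on $\norm{u_0}_H$ so that the shift $C_*$ is truly uniform in $n$; every other ingredient of the defocusing proof transfers without modification once $Y$ is kept strictly positive.
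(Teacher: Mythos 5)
Your proposal is correct and follows exactly the route the paper intends: it omits the proof with a reference to Proposition 5.8 in \cite{ExistencePaper}, and the standard adaptation there is precisely your combination of mass conservation \eqref{LzweiEstimate} with the Gagliardo--Nirenberg type bound \eqref{boundantiderivativeFocusing2} and Young's inequality (using $\beta_2<2$) to recover coercivity of $\norm{u_n}_{H}^2+\energy(u_n)$ over $\norm{u_n}_{\EA}^2$, after which the defocusing Gronwall argument goes through unchanged. The only cosmetic imprecision is the bound $\energy(S_n u_0)\lesssim 1+\norm{u_0}_{\EA}^2$; the correct statement is simply that $\vert\Fhat(S_n u_0)\vert\lesssim \norm{S_n u_0}_{\LalphaPlusEinsKurz}^{\alpha+1}\lesssim \norm{u_0}_{\EA}^{\alpha+1}$ is a finite constant depending on $u_0$, which is all the argument needs.
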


Since the Arzela-Ascoli theorem is a powerful tool to prove compactness, equicontinuity typically plays an important role in compactness arguments. We present a variant for stochastic processes which is formulated via the notion of stochastic convergence.
	
\begin{Definition}[Aldous condition]\label{DefinitionAldous}
	Let $(X_n)_{n\in\N_0}$ be a sequence of adapted stochastic processes in a Banach space $E.$ Assume that for every $\varepsilon>0$ and $\eta>0$ there is $\delta>0$ such that for every sequence $(\tau_n)_{n\in\N_0}$ of $[0,T]$-valued stopping times one has
	\begin{align*}
	\sup_{n\in\N_0} \sup_{0<\theta \le \delta} \Prob \left\{ \Vert X_n((\tau_n+\theta)\land T)-X_n(\tau_n)\Vert_E\ge \eta \right\}\le \varepsilon.
	\end{align*}
	In this case, we say that $(X_n)_{n\in\N_0}$
	satisfies the \emph{Aldous condition}  in $E$.
\end{Definition}

\begin{Prop}\label{AldousConditionTrunction}
	 The sequence $\big((u_n,F(u_n))\big)_{n\in\N_0}$ satisfies the Aldous condition  in $X_\gamma\times \LalphaPlusEinsDual.$
\end{Prop}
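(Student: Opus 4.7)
My plan is to verify the Aldous condition for each coordinate separately and then combine them by a union bound, using that $\|(v,w)\|_{X_\gamma\times\LalphaPlusEinsDual}\eqsim\|v\|_{X_\gamma}+\|w\|_{\LalphaPlusEinsDual}$. Fix a sequence $(\tau_n)$ of $[0,T]$-valued stopping times and set $\sigma_n:=(\tau_n+\theta)\wedge T$ for $\theta\in(0,\delta]$. For the $u_n$-coordinate I would exploit the complex interpolation identity $X_\gamma=[\EAdual,\EA]_{1/2+\gamma}$, which is available since $\gamma\in[0,1/2)$, to obtain
\[
\|u_n(\sigma_n)-u_n(\tau_n)\|_{X_\gamma}\lesssim \|u_n(\sigma_n)-u_n(\tau_n)\|_{\EAdual}^{1/2-\gamma}\,\|u_n(\sigma_n)-u_n(\tau_n)\|_{\EA}^{1/2+\gamma}.
\]
The $\EA$-factor is controlled by $2\sup_{t\in[0,T]}\|u_n(t)\|_{\EA}$, which is uniformly bounded in $L^r(\Omega)$ for every finite $r$ by Propositions~\ref{EstimatesGalerkinSolutionDefocusing} and~\ref{EstimatesGalerkinSolutionFocusing}. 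Hence, after H\"older's inequality with conjugate exponents $1/(1/2-\gamma)$ and $1/(1/2+\gamma)$, the task reduces to proving an estimate of the form $\E\|u_n(\sigma_n)-u_n(\tau_n)\|_{\EAdual}\lesssim\theta^{1/2}$ uniformly in $n$, after which Chebyshev's inequality yields the Aldous condition for $(u_n)$ in $X_\gamma$.

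To obtain the $\EAdual$-estimate I would plug in the integral formulation~\eqref{integralFormulationGalerkin} and handle the four summands individually. The drift $Au_n$ is $\EAdual$-bounded by $\|u_n\|_\EA$ since $A\in\mathcal L(\EA,\EAdual)$; the truncated nonlinearity is handled via $\|P_n\|_{\mathcal L(\EAdual)}\le 1$ from Lemma~\ref{PnPropertiesLzwei} together with the embedding $\LalphaPlusEinsDual\hookrightarrow\EAdual$ dual to $\EA\hookrightarrow\LalphaPlusEins$, feeding on the uniform $\LalphaPlusEinsDual$-moment bound of $F(u_n)$; the Stratonovich correction $\mu_n(u_n)$ is $H$-bounded by $\|u_0\|_H$ via $\|S_n\|_{\mathcal L(H)}\le 1$, the summability~\eqref{noiseBoundsH}, and the mass conservation~\eqref{LzweiEstimate}. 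Each deterministic integral thus contributes $O(\theta)$. The stochastic integral yields $O(\theta^{1/2})$ through the It\^o isometry in $\EAdual$, using $\|S_nB_mS_n u_n\|_H\le \|B_m\|_{\mathcal L(H)}\|u_n\|_H$, \eqref{noiseBoundsH}, mass conservation, and Jensen's inequality. Dominating $\theta$ by $\theta^{1/2}$ for $\theta\le 1$ completes the bound.

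The $F(u_n)$-coordinate is then deduced from the one just established using the local Lipschitz estimate from Assumption~\ref{nonlinearAssumptions}~ii),
\[
\|F(u_n(\sigma_n))-F(u_n(\tau_n))\|_{\LalphaPlusEinsDualNorm}\lesssim\bigl(\|u_n(\sigma_n)\|_{\LalphaPlusEinsKurz}+\|u_n(\tau_n)\|_{\LalphaPlusEinsKurz}\bigr)^{\alpha-1}\|u_n(\sigma_n)-u_n(\tau_n)\|_{\LalphaPlusEinsKurz},
\]
combined with the continuous embedding $X_\gamma\hookrightarrow\LalphaPlusEins$ from Assumption~\ref{spaceAssumptions}~(iii) to dominate the increment, and with the uniform $\EA$-moment bound to control the prefactor. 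Truncating on the good event $\{\sup_{t\in[0,T]}\|u_n(t)\|_{\EA}\le R\}$ and applying Chebyshev on the complement, the Aldous condition for $(F(u_n))$ in $\LalphaPlusEinsDual$ follows from the one already proved for $(u_n)$. The main technical obstacle is keeping the $\EAdual$-estimate completely free of any $n$-dependent constants; this is precisely what forces the use of the uniform contractivity bounds $\|P_n\|_{\mathcal L(\EAdual)}, \|S_n\|_{\mathcal L(H)}, \|S_n\|_{\mathcal L(\EA)}\le 1$ from Lemmas~\ref{PnPropertiesLzwei} and~\ref{PaleyLittlewoodLemma} rather than any of the smoothing estimates, whose constants would blow up like $2^{n\rho/2}$.
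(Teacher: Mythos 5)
Your proposal is correct and follows essentially the same route as the paper: interpolation $X_\gamma=[\EAdual,\EA]_{\gamma+1/2}$ combined with the uniform $\EA$-moment bounds and a $\theta^{1/2}$-increment estimate in $\EAdual$, then the local Lipschitz property of $F$ together with $X_\gamma\hookrightarrow\LalphaPlusEins$ for the second coordinate, and Tschebyscheff plus a union bound at the end. The only differences are cosmetic: the paper simply cites \cite{ExistencePaper} for the $\EAdual$-increment estimate that you sketch in detail (your term-by-term treatment is the correct one), and it uses second moments with Cauchy--Schwarz where you use first moments and a truncation on the good event.
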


\begin{proof}
	As in \cite{ExistencePaper}, one can show that there is $C_1>0$ which satisfies 
	\begin{align*}
	\big(\E \big[\Vert u_n((\tau_n+\theta)\land T)-u_n(\tau_n)\Vert_\EAdual^2\big]\big)^{\nicefrac{1}{2}}
	\le C_1 \theta^{\nicefrac{1}{2}}
	\end{align*}
	for all $n\in\N_0$.
	Proposition~\ref{EstimatesGalerkinSolutionDefocusing} and Proposition~\ref{EstimatesGalerkinSolutionFocusing} imply that there is $C_2>0$ such that 
	\begin{align*}
	\big(\E \big[\Vert u_n((\tau_n+\theta)\land T)-u_n(\tau_n)\Vert_\EA^2\big]\big)^{\nicefrac{1}{2}}
	\le 2 \big(\E \big[\sup_{t\in [0,T]}\Vert u_n(t)\Vert_\EA^2\big]\big)^{\nicefrac{1}{2}}\le C_2
	\end{align*}	
	for all $n\in\N_0$. Using this and the fact that $X_\gamma=[\EAdual,\EA]_{\gamma+1/2}$, we obtain
	\begin{align*}
	&\big(\E \big[\Vert u_n((\tau_n+\theta)\land T)-u_n(\tau_n)\Vert_{X_\gamma}^2\big]\big)^{\nicefrac{1}{2}}
	\\&\le \big(\E \big[\Vert u_n((\tau_n+\theta)\land T)-u_n(\tau_n)\Vert_\EAdual^2\big]\big)^{\nicefrac{1}{2}(1/2-\gamma)} \big(\E \big[\Vert u_n((\tau_n+\theta)\land T)-u_n(\tau_n)\Vert_\EA^2\big]\big)^{\nicefrac{1}{2}(1/2+\gamma)}
	\\&\le C_1^{1/2-\gamma}\, C_2^{1/2+\gamma}\, \theta^{\nicefrac{1}{2}(1/2-\gamma)}
	\end{align*}	
	and the embedding $X_\gamma\hookrightarrow \LalphaPlusEins$ yields
	\begin{align*}
		&\E \big[\Vert F(u_n((\tau_n+\theta)\land T))-F(u_n(\tau_n))\Vert_{L^{\nicefrac{(\alpha+1)}{\alpha}}}\big]
		\\&\lesssim \E\big[\big(\norm{u_n((\tau_n+\theta)\land T)}_{L^{\alpha+1}}^{\alpha-1}+\norm{u_n(\tau_n)}_{L^{\alpha+1}}^{\alpha-1}\big) \Vert u_n((\tau_n+\theta)\land T))-u_n(\tau_n)\Vert_{L^{\alpha+1}}\big]
		\\&\le \big(\E\big[\big(\norm{u_n((\tau_n+\theta)\land T)}_{L^{\alpha+1}}^{\alpha-1}+\norm{u_n(\tau_n)}_{L^{\alpha+1}}^{\alpha-1}\big)^2\big]\big)^{\nicefrac{1}{2}} 
		\big(\E\big[\Vert u_n((\tau_n+\theta)\land T))-u_n(\tau_n)\Vert_{L^{\alpha+1}}^2\big]\big)^{\nicefrac{1}{2}} 
		\\&\le 2 \Big(\E\Big[\sup_{t\in [0,T]}\norm{u_n(t)}_{\EA}^{2(\alpha-1)}\Big]\Big)^{\nicefrac{1}{2}} 
		\big(\E\big[\Vert u_n((\tau_n+\theta)\land T))-u_n(\tau_n)\Vert_{X_\gamma}^2\big]\big)^{\nicefrac{1}{2}}\lesssim \theta^{\nicefrac{1}{2}(1/2-\gamma)}. 
	\end{align*}
	By the Tschebyscheff inequality, we obtain for all $\delta, \eta>0$
	\begin{align*}
	&\sup_{n\in\N_0} \sup_{0<\theta \le \delta}\Prob \left\{\Vert u_n((\tau_n+\theta)\land T)-u_n(\tau_n)\Vert_{X_\gamma}\ge \eta\right\}
	\\&\le \frac{1}{\eta^2} \sup_{n\in\N_0} \sup_{0<\theta \le \delta} \E \big[\Vert u_n((\tau_n+\theta)\land T)-u_n(\tau_n)\Vert_{X_\gamma}^2\big]
	\lesssim \frac{\delta^{1/2-\gamma}}{\eta^2}
	\end{align*}
	and 
		\begin{align*}
		&\sup_{n\in\N_0} \sup_{0<\theta \le \delta}\Prob \left\{\Vert F(u_n((\tau_n+\theta)\land T))-F(u_n(\tau_n))\Vert_{L^{\nicefrac{(\alpha+1)}{\alpha}}}\ge \eta\right\}
		\\&\le \frac{1}{\eta} \sup_{n\in\N_0} \sup_{0<\theta \le \delta} \E \big[\Vert F(u_n((\tau_n+\theta)\land T))-F(u_n(\tau_n))\Vert_{L^{\nicefrac{(\alpha+1)}{\alpha}}}\big]
		\lesssim \frac{\delta^{\nicefrac{1}{2}(1/2-\gamma)}}{\eta}.
		\end{align*}
	Therefore, we obtain that for all $\varepsilon,\eta>0$ there exists $\delta>0$ such that
	\begin{align*}
		&\sup_{n\in\N_0} \sup_{0<\theta \le \delta}\Prob \left\{\big\Vert \big(u_n((\tau_n+\theta)\land T),F(u_n((\tau_n+\theta)\land T))\big)-\big(u_n(\tau_n),F(u_n(\tau_n))\big)\big\Vert_{X_\gamma\times L^{\nicefrac{(\alpha+1)}{\alpha}}}\ge \eta\right\}
		\\&\le \sup_{n\in\N_0} \sup_{0<\theta \le \delta}\Prob \left\{\Vert u_n((\tau_n+\theta)\land T)-u_n(\tau_n)\Vert_{X_\gamma}\ge \eta\right\}
		\\&\quad+ \sup_{n\in\N_0} \sup_{0<\theta \le \delta}\Prob \left\{\Vert F(u_n((\tau_n+\theta)\land T))-F(u_n(\tau_n))\Vert_{L^{\nicefrac{(\alpha+1)}{\alpha}}}\ge \eta\right\}
		\\&\le \varepsilon.
	\end{align*}	
	This completes the proof Proposition~\ref{AldousConditionTrunction}.
\end{proof}
\section{About compactness in an appropriate locally convex space}\label{CompactnessSection}

In the previous section, we have shown that the solutions to the truncated equations satisfy uniform estimates in $\EA$ as well as the stochastic equicontinuity condition called Aldous condition. Now we will study a locally convex space in which these properties lead to tightness and whose topology is strong enough to pass to the limit in equation \eqref{galerkinEquation}. 


\begin{Definition}\label{DefinitionWeakTopologySpaces}
	Let $X$ be a Banach space with separable dual $X^*.$ 
Then, we define 
		\begin{align*}
		C_w([0,T],X):=\left\{ u: [0,T]\to X\colon [0,T]\ni t\to \duality{u(t)}{x^*}\in \C \text{ is cont. for all $x^*\in X^*$} \right\}.
		\end{align*}

\end{Definition}

Let $U$ be a separable Hilbert space which is compactly embedded and dense in $\EA$ (cf., e.g., Lemma C.1 in \cite{BrzezniakMotyl}). We point out that by the embeddings  $\LalphaPlusEinsDual\hookrightarrow \EAdual$ and $X_\gamma\hookrightarrow \EAdual$, we can deduce that also the embeddings $\LalphaPlusEinsDual\hookrightarrow U^*$ and $X_\gamma\hookrightarrow U^*$ are compact and dense.
Furthermore, we set
\begin{align}
Z= C_w([0,T],\EA\times \LalphaPlusEinsDual)\cap C([0,T],X_\gamma\times \LalphaPlusEinsDual)
\end{align}
and define a locally convex topology $\mathcal{Z}$ on $Z$ by the seminorms
\begin{align}\label{seminorms}
P=\{p_U\}\cup \{p_{h}\colon  h=(h_1,h_2)\in \EAdual\times \LalphaPlusEins\}
\end{align}
which satisfy for all $(u,G)\in Z$,  $h=(h_1,h_2)\in \EAdual\times \LalphaPlusEins$ that
\begin{align}\label{seminormU}
p_U(u,G)=\sup_{t\in [0,T]}\max\big\{\norm{u(t)}_{U^*},\norm{G}_{U^*}\big\}
\end{align}
and 
\begin{align}\label{seminormsWeakConvergence}
p_{h}(u,G)=\sup_{t\in [0,T]}\vert \duality{u(t)}{h_1}_{\EA,\EAdual}+ \duality{G(t)}{h_2}_{L^{(\alpha+1)/\alpha},L^{\alpha+1}}\vert.
\end{align}

We proceed with some properties of a function space which is closely connected to our truncated problem \ref{galerkinEquation}.
\begin{Lemma}\label{Lemma:galerkinSpaceClosed} Let $n\in\N$.
	\begin{enumerate}[a)]
		\item We have
		\begin{align*}
			C([0,T],H_n\times \LalphaPlusEinsDual)=\big\{(u,G)\in Z\colon \forall t\in [0,T]\colon u(t)=P_nu(t) \big\}.
		\end{align*}
		\item $C([0,T],H_n\times \LalphaPlusEinsDual)$ is a closed subset of $(Z,\mathcal{Z})$.
	\end{enumerate}
\end{Lemma}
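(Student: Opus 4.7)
The key observation for both parts is that on the closed subspace $H_n=P_n(H)$ the $H$-norm and the $\EA$-norm are equivalent. The inequality $\norm{\cdot}_H\le\norm{\cdot}_\EA$ is immediate on $\EA$, while the reverse bound $\norm{y}_\EA\le 2^{(n+1)\rho/2}\norm{y}_H$ on $H_n$ follows directly from Lemma~\ref{PnPropertiesLzwei}\eqref{PnExtended} via $\norm{y}_\EA=\norm{P_ny}_\EA$. Consequently, on $H_n$ all of the topologies induced by $H$, $\EA$, and $X_\gamma$ coincide, and $H_n$ is closed in each of them.

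For part~(a), I would verify the two inclusions separately. For ``$\subseteq$'', given $(u,G)$ with $u\in C([0,T],H_n)$, Lemma~\ref{PnPropertiesLzwei}\eqref{PnOrthogonal} forces $u(t)=P_nu(t)$, while the $H/\EA$-equivalence upgrades $H$-continuity of $u$ to $\EA$-continuity, which then implies continuity into $X_\gamma$ and weak continuity in $\EA$; hence $(u,G)\in Z$. For ``$\supseteq$'', I would start from $(u,G)\in Z$ with $u(t)=P_nu(t)$, note that $P_n$ maps into $H_n$ so automatically $u(t)\in H_n$, and then combine continuity of $u\colon[0,T]\to X_\gamma$ with the boundedness of $P_n\colon X_\gamma\to H$ (using $X_\gamma\hookrightarrow H$ since $\gamma\ge 0$ together with $P_n\in\mathcal{L}(H)$) to conclude that $u=P_nu$ is continuous as a map into $H_n$.

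For part~(b), part~(a) reduces closedness to the statement that the pointwise identity $u(t)=P_nu(t)$ is preserved under $\mathcal{Z}$-convergence. Given a sequence $(u_k,G_k)\to(u,G)$ in $\mathcal{Z}$ with each $u_k(t)\in H_n$, the seminorms $p_{(h_1,0)}$ from \eqref{seminormsWeakConvergence} applied with arbitrary $h_1\in\EAdual$ yield $\duality{u_k(t)}{h_1}\to\duality{u(t)}{h_1}$ uniformly in~$t$, and in particular $u_k(t)\rightharpoonup u(t)$ in $\EA$ for each fixed~$t$. Since $P_n\in\mathcal{L}(\EA)$ is weakly sequentially continuous, passing to the weak limit in $u_k(t)=P_nu_k(t)$ gives $u(t)=P_nu(t)$, and part~(a) closes the argument.

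I do not anticipate a serious obstacle here; the proof is essentially a diagram chase. The only point that requires some care is the bookkeeping between the three distinct topologies ($H$, $\EA$, $X_\gamma$) and the timely invocation of the smoothing bound $\norm{P_n}_{\mathcal{L}(H,\EA)}\le 2^{(n+1)\rho/2}$ from Lemma~\ref{PnPropertiesLzwei} to collapse them on $H_n$.
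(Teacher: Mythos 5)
Your proposal is correct and follows essentially the same route as the paper: part (a) reduces to the identification $H_n=\{v\in H\colon v=P_nv\}$ together with the continuity bookkeeping between $H$, $\EA$, and $X_\gamma$ on $H_n$, and part (b) passes to the limit in $u_k(t)=P_nu_k(t)$ using the pointwise weak convergence $u_k(t)\rightharpoonup u(t)$ in $\EA$ supplied by the seminorms \eqref{seminormsWeakConvergence} and the weak sequential continuity of $P_n\in\mathcal{L}(\EA)$. If anything, your write-up is slightly more explicit than the paper's about why $H$-continuity upgrades to $\EA$-continuity on $H_n$ via the smoothing bound from Lemma~\ref{PnPropertiesLzwei}, a point the paper leaves implicit.
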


\begin{proof}
	\emph{Step 1.} First, we show that $H_n=\{v\in H\colon v=P_n v\}$. The inclusion $\supset$ is an immediate consequence of $H_n=P_n(H)$. Let $v\in H_n$. Then, there exists $w\in H$ such that $v=P_n w$. Hence, we get $P_n v=P_n^2 w=P_n w=v$. This proves the inclusion $\subset$. 
	
	\emph{Step 2.} From the first step, we infer 
	\begin{align}\label{galerkinClosed:Inclusion}
		\big\{(u,G)\in Z\colon \forall t\in [0,T]\colon u(t)=P_nu(t) \big\}
		=\big\{(u,G)\in Z\colon \forall t\in [0,T]\colon u(t)\in H_n \big\}:=A
	\end{align}
	Furthermore, we observe $C([0,T],H_n\times \LalphaPlusEinsDual)\subset A$ due to the fact that $H_n\subset \EA$ by Lemma~\ref{PnPropertiesLzwei}. The inclusion $A\subset C([0,T],H_n\times \LalphaPlusEinsDual)$ is an immediate consequence of \eqref{galerkinClosed:Inclusion}.
	
	\emph{Step 3.} Let $\big((u_k,G_k)\big)_{k\in\N}\subset C([0,T],H_n\times \LalphaPlusEins)$ and $(u,G)\in Z$ satisfy that $(u_k,G_k)\to (u,G)$ in $Z$ as $k\to\infty$. 
	In particular, we obtain that $u\in C([0,T],H\times \LalphaPlusEins)$ and $u_k(t)\rightharpoonup u(t)$ in $\EA$ for all $t\in [0,T]$. 
	As a consequence of item~a) and  $P_n\in \LinearOperators{\EA}$ (cf. Lemma~\ref{PnPropertiesLzwei}), we deduce $u_k(t)=P_n u_k(t)\rightharpoonup P_n u(t)$ in $\EA$ for all $t\in [0,T]$. This implies $u(t)=P_nu(t)$ for all $t\in [0,T]$ and thus, by a), $(u,G)\in C([0,T],H_n\times \LalphaPlusEinsDual)$. The proof of item~b) is therefore completed.
\end{proof}

We next want to ensure that the quantities which occur in the uniform estimates from the previous section actually make sense as functions on $Z$. 

\begin{Lemma}\label{Lemma:LSC}
	\begin{enumerate}[(a)]
		\item For all $(u,G)\in Z$ it holds that 
		\begin{align*}
			\sup_{t\in [0,T]}\norm{u(t)}_{\EA}<\infty \qquad \text{and}\qquad \sup_{t\in [0,T]}\norm{G(t)}_{\LalphaPlusEinsDualNorm}<\infty.
		\end{align*}
		\item Let  $\Phi_j\colon Z\to [0,\infty)$, $j\in\{1,2\}$, be the functions which satisfy for all $(u,G)\in Z$ that 
		\begin{align*}
		\Phi_1(u,G)=\sup_{t\in [0,T]}\norm{u(t)}_{\EA}\qquad \text{and}\qquad \Phi_2(u,G)=\sup_{t\in [0,T]}\norm{G(t)}_{\LalphaPlusEinsDualNorm}.
		\end{align*}
		Then, it holds that $\Phi_1$ and $\Phi_2$ are lower semicontinuous.
	\end{enumerate}	
\end{Lemma}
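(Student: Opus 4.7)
The plan is to reduce both statements to two classical tools applied to the seminorm structure of the topology $\mathcal{Z}$, namely the uniform boundedness principle for part~(a) and the fact that a supremum of a family of continuous seminorms is lower semicontinuous for part~(b). The key observation is that the Hilbert space $\EA$ and the reflexive space $\LalphaPlusEinsDual$ (since $(\alpha+1)/\alpha\in(1,2)$) are both reflexive, so that their norms may be represented as dual pairings against the test spaces $\EAdual$ and $\LalphaPlusEins$ appearing in the definition \eqref{seminormsWeakConvergence} of the seminorms $p_h$.

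For part~(a), let $(u,G)\in Z$. By the definition of $Z$, we have $u\in C_w([0,T],\EA)$, so for every fixed $h_1\in \EAdual$ the scalar function $t\mapsto \duality{u(t)}{h_1}_{\EA,\EAdual}$ is continuous on the compact interval $[0,T]$ and hence bounded. Viewing each $u(t)$ as an element of the bidual $(\EAdual)^*=\EA$, the family $\{u(t):t\in[0,T]\}$ is pointwise bounded on $\EAdual$. The uniform boundedness principle (Banach-Steinhaus) then yields $\sup_{t\in[0,T]}\norm{u(t)}_\EA<\infty$. The same reasoning, with $\LalphaPlusEins$ in place of $\EAdual$ and $\LalphaPlusEinsDual$ in place of $\EA$, gives $\sup_{t\in[0,T]}\norm{G(t)}_{\LalphaPlusEinsDualNorm}<\infty$.

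For part~(b), the strategy is to write each $\Phi_j$ as a supremum over a family of continuous functions on $(Z,\mathcal{Z})$. Using the norming property of the dual unit balls in the reflexive spaces $\EA$ and $\LalphaPlusEinsDual$, I would write
\begin{align*}
\Phi_1(u,G)&=\sup_{t\in[0,T]}\sup_{\norm{h_1}_{\EAdual}\le 1}\bigl|\duality{u(t)}{h_1}_{\EA,\EAdual}\bigr|
=\sup_{\norm{h_1}_{\EAdual}\le 1} p_{(h_1,0)}(u,G),\\
\Phi_2(u,G)&=\sup_{\norm{h_2}_{\LalphaPlusEinsKurz}\le 1} p_{(0,h_2)}(u,G),
\end{align*}
where the outer suprema are exchanged with the sup over $t$ in the obvious way. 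Because the topology $\mathcal{Z}$ is by construction generated by the seminorms in \eqref{seminorms}, each $p_{(h_1,0)}$ and each $p_{(0,h_2)}$ is a continuous seminorm on $(Z,\mathcal{Z})$. Therefore $\Phi_1$ and $\Phi_2$ are suprema of continuous functions, hence lower semicontinuous.

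I do not expect any serious obstacle: the only points that require a small amount of care are verifying that swapping the two suprema in the representation of $\Phi_j$ is legitimate (it is, since both are unrestricted suprema of a nonnegative quantity), and checking reflexivity of $\LalphaPlusEinsDual$ to justify the Hahn-Banach style representation of its norm via pairings with $\LalphaPlusEins$. Both are routine, so the proof is essentially a bookkeeping exercise assembling the definitions of $Z$, $\mathcal{Z}$ and the seminorms \eqref{seminormsWeakConvergence}.
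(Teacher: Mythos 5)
Your proof is correct. Part~(a) is essentially the paper's argument verbatim: weak continuity on the compact interval gives pointwise boundedness against the dual, and Banach--Steinhaus upgrades this to a uniform norm bound; the only ingredient you need (and correctly invoke) is that the pairings against $\EAdual$ and $\LalphaPlusEins$ are norming for $\EA$ and $\LalphaPlusEinsDual$, which follows from Hilbert space duality and $L^p$--$L^{p'}$ duality (reflexivity is not even needed). For part~(b) you take a slightly different and in fact more direct route than the paper. The paper first passes to countable sequences $(v_n)\subset U$ and $(w_n)\subset U$ dense in the unit balls of $\EAdual$ and $\LalphaPlusEins$, rewrites $\Phi_j$ as a countable supremum of functions of the form $\sup_{t}|\duality{\cdot}{v_n}_{U^*,U}|$, and verifies continuity of each of these via the seminorm $p_U$, i.e.\ via convergence in $C([0,T],U^*)$. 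You instead observe that $\Phi_1=\sup_{\norm{h_1}_{\EAdual}\le 1}p_{(h_1,0)}$ and $\Phi_2=\sup_{\norm{h_2}_{\LalphaPlusEins}\le 1}p_{(0,h_2)}$, where the $p_h$ are among the seminorms generating $\mathcal{Z}$ and are therefore continuous by construction; an arbitrary (even uncountable) supremum of continuous functions is lower semicontinuous. Both arguments are valid; yours avoids the detour through $U$ entirely, while the paper's version has the minor side benefit of exhibiting $\Phi_j$ as a countable supremum, though that is not needed for lower semicontinuity or for the Borel measurability used later.
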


\begin{proof}
	\emph{Step 1.} For item a), it is sufficient to prove that for a Banach space $E$ and $x\in C_w([0,T],E)$ it holds that $\sup_{t\in [0,T]}\norm{x(t)}_E<\infty.$ 
	From $x\in C_w([0,T],E)$, we infer that for all $x^*\in E^*$, we have $\sup_{t\in [0,T]} \vert \duality{x(t)}{x^*}\vert<\infty$. The uniform boundedness principle thus proves 
	\begin{align*}
		\sup_{t\in [0,T]}\norm{x(t)}_E=\sup_{t\in [0,T]} \sup_{\norm{x^*}_{E^*}\le 1}\vert \duality{x(t)}{x^*}\vert<\infty.
	\end{align*}
	\emph{Step 2.} By the dense embeddings $U\hookrightarrow \EA\hookrightarrow \EAdual$ and $U\hookrightarrow \EA\hookrightarrow \LalphaPlusEinsDual$ there exist sequences $(v_n)_{n\in\N}, (w_n)_{n\in\N}\subset U$  such that $(v_n)_{n\in\N}$ is dense in $\{v\in \EAdual\colon \norm{v}_\EAdual\le 1 \}$ and $(w_n)_{n\in\N}$ is dense in $\{w\in \LalphaPlusEins\colon \norm{w}_\LalphaPlusEins\le 1 \}$. This yields for all $(u,G)\in Z$
		\begin{align*}
		\Phi_1(u,G)=\sup_{t\in [0,T]}\sup_{n\in\N} \big\vert \duality{u(t)}{v_n}_{\EA,\EAdual}\big\vert=\sup_{n\in\N} \sup_{t\in [0,T]}\big\vert \duality{u(t)}{v_n}_{U^*,U}\big\vert. 
		\end{align*}
				\begin{align*}
				\text{and}\qquad \Phi_2(u,G)=\sup_{t\in [0,T]}\sup_{n\in\N} \big\vert \duality{G(t)}{w_n}_{L^{(\alpha+1)/\alpha},L^{\alpha+1}}\big\vert=\sup_{n\in\N} \sup_{t\in [0,T]}\big\vert \duality{G(t)}{w_n}_{U^*,U}\big\vert. 
				\end{align*}
	Now let $ (u_k,G_k) \to (u,G)$ in $Z$ as $k\to\infty$. Then, we deduce for each $n\in\N$ 
	\begin{align*}
		\sup_{t\in [0,T]}\big\vert \duality{u_k(t)-u(t)}{v_n}_{U^*,U}\big\vert \le \norm{u_k-u}_{C([0,T],U^*)} \norm{v_n}_U\to 0
	\end{align*}
	\begin{align*}
	\text{and}\qquad \sup_{t\in [0,T]}\big\vert \duality{G_k(t)-G(t)}{w_n}_{U^*,U}\big\vert \le \norm{G_k-G}_{C([0,T],U^*)} \norm{w_n}_U\to 0
	\end{align*}
	as $k\to \infty$. Combining this with the fact that the supremum of continuous functions is lower semicontinuous proves the assertion.
\end{proof}

\begin{Prop}\label{CompactnessDeterministic}
	Let $K$ be a subset of $Z$ and $r>0$ such that
	\begin{enumerate}
		\item[a)] $
		\sup_{(u,G)\in K} \sup_{t\in [0,T]} \max\Big\{ \Vert u(t)\Vert_{\EA}, \Vert G(t)\Vert_{\LalphaPlusEinsDualNorm}\Big\}\le r ;
		$
		\item[b)] $K$ is equicontinuous in $C([0,T],X_\gamma\times \LalphaPlusEinsDual),$ i.e.
		\begin{align*}
		\lim_{\delta \to 0} \sup_{(u,G)\in K} \sup_{\vert t-s\vert\le \delta} \max\Big\{\Vert u(t)-u(s)\Vert_{X_\gamma},\Vert G(t)-G(s)\Vert_{\LalphaPlusEinsDualNorm}\Big\}=0.
		\end{align*}
	\end{enumerate}
	Then, $K$ is sequentially relatively compact in $(Z,\mathcal{Z}).$
\end{Prop}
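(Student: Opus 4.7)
The plan is a two-stage argument: first apply Arzel\`a-Ascoli in the auxiliary space $C([0,T],U^*\times U^*)$ to extract a uniformly convergent subsequence, and then upgrade this convergence using reflexivity, the lower semicontinuity already discussed in Lemma~\ref{Lemma:LSC}, and a density/equicontinuity argument to obtain convergence with respect to all the seminorms defining $\mathcal{Z}$. Given $((u_n,G_n))_{n\in\N}\subset K$, I would first use the continuous embedding $\EA\hookrightarrow X_\gamma$ (since $\gamma<1/2$) together with the compact embeddings $X_\gamma\hookrightarrow U^*$ and $\LalphaPlusEinsDual\hookrightarrow U^*$ mentioned before Definition~\ref{DefinitionWeakTopologySpaces} to conclude that for every $t\in[0,T]$ the set $\{(u_n(t),G_n(t))\colon n\in\N\}$ is relatively compact in $U^*\times U^*$. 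Assumption (b) transports equicontinuity from $X_\gamma\times \LalphaPlusEinsDual$ to $U^*\times U^*$, so Arzel\`a-Ascoli yields a (not relabeled) subsequence with $(u_n,G_n)\to(u,G)$ in $C([0,T],U^*\times U^*)$, i.e.\ convergence with respect to the seminorm $p_U$.

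Next I would identify the limit pointwise. For fixed $t$, $(u_n(t))_n$ is bounded in the reflexive space $\EA$, so every subsequence has a weakly convergent sub-subsequence in $\EA$; the weak limit must coincide with the $U^*$-limit $u(t)$ via the continuous embedding $\EA\hookrightarrow U^*$ (through the Gelfand triple $\EA\hookrightarrow H\hookrightarrow\EAdual\hookrightarrow U^*$). Hence $u(t)\in \EA$ with $\norm{u(t)}_\EA\le r$ and $u_n(t)\rightharpoonup u(t)$ in $\EA$; the analogous statement holds for $G_n(t)\rightharpoonup G(t)$ in the reflexive space $\LalphaPlusEinsDual$. A Strauss-type argument then shows $u\in C_w([0,T],\EA)$: for $t_k\to t$, boundedness of $(u(t_k))_k$ in $\EA$ combined with uniqueness of the $U^*$-limit $u(t)$ gives $u(t_k)\rightharpoonup u(t)$ in $\EA$, and analogously for $G$. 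Thus $(u,G)\in Z$.

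It remains to verify $p_{h}(u_n-u, G_n-G)\to 0$ for every $h=(h_1,h_2)\in\EAdual\times\LalphaPlusEins$, which I regard as the main technical point. I would set $f_n(t):=\duality{u_n(t)-u(t)}{h_1}_{\EA,\EAdual}+\duality{G_n(t)-G(t)}{h_2}_{L^{(\alpha+1)/\alpha},L^{\alpha+1}}$; by the pointwise weak convergence above, $f_n(t)\to 0$ for every $t$, and $(f_n)$ is uniformly bounded by $2r(\norm{h_1}_\EAdual+\norm{h_2}_\LalphaPlusEins)$. To pass from pointwise to uniform convergence I would show equicontinuity: because $\EA\hookrightarrow X_\gamma$ is dense (they sit in the same scale of fractional domains), duality yields $X_\gamma^*\hookrightarrow \EAdual$ densely, so for any $\varepsilon>0$ I pick $\tilde h_1\in X_\gamma^*$ with $\norm{h_1-\tilde h_1}_\EAdual<\varepsilon/(4r)$ and estimate
\begin{align*}
|\duality{u_n(t)-u_n(s)}{h_1}|
\le \norm{u_n(t)-u_n(s)}_{X_\gamma}\norm{\tilde h_1}_{X_\gamma^*}+2r\norm{h_1-\tilde h_1}_\EAdual,
\end{align*}
so the equicontinuity in $X_\gamma$ from assumption (b) controls the first term uniformly in $n$, while the second is $<\varepsilon/2$. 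The $G_n$-part is controlled directly by the equicontinuity in $\LalphaPlusEinsDual$ times $\norm{h_2}_\LalphaPlusEins$. Hence $(f_n)$ is uniformly bounded, equicontinuous, and pointwise null, and Arzel\`a-Ascoli (or a direct $3\varepsilon$ argument using a finite cover of $[0,T]$) gives $f_n\to 0$ uniformly on $[0,T]$. This yields $p_{h}(u_n-u,G_n-G)\to 0$ and completes the proof; the delicate point is precisely this density-and-equicontinuity interplay, since the topology of $\mathcal{Z}$ asks for uniform-in-time testing against spaces dual to the strong ones in which we have compactness.
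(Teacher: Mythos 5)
Your argument is essentially sound and reaches the same conclusion by a closely related but differently organized route. The paper proceeds by a diagonalization over a countable dense set of times followed by a $3\varepsilon$ Cauchy argument that treats the seminorms $p_U$ and $p_h$ simultaneously, whereas you first extract the limit via Arzel\`a--Ascoli in $C([0,T],U^*\times U^*)$, identify it pointwise through reflexivity of $\EA$ and $\LalphaPlusEinsDual$, and only then upgrade to the weak seminorms by an equicontinuity-plus-density argument. The substantive ingredients coincide: both proofs rest on the compactness of $X_\gamma\hookrightarrow U^*$ and $\LalphaPlusEinsDual\hookrightarrow U^*$, on assumption b), and on approximating $h_1\in\EAdual$ by an element of $X_{-\gamma}$ so that the equicontinuity in $X_\gamma$ can be brought to bear on the pairing $\duality{u_n(t)}{h_1}$. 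Your ``limit-first'' organization is arguably cleaner in that it separates the identification of the limit from the verification of convergence, at the cost of invoking reflexivity and a Strauss-type argument where the paper simply records weak Cauchyness.

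There is one omission you should repair: you conclude ``$(u,G)\in Z$'' after establishing only $(u,G)\in C_w([0,T],\EA\times\LalphaPlusEinsDual)$, but $Z$ is the \emph{intersection} of this space with $C([0,T],X_\gamma\times\LalphaPlusEinsDual)$, and sequential relative compactness in $(Z,\mathcal{Z})$ requires the limit to lie in $Z$. The missing continuity is exactly what Step 2 of the paper's proof supplies: since $u_n(t)-u_n(s)\rightharpoonup u(t)-u(s)$ in $X_\gamma$ (and likewise for $G$) and norms are weakly lower semicontinuous, the limit inherits the uniform modulus of continuity from assumption b),
\begin{align*}
\Vert u(t)-u(s)\Vert_{X_\gamma}\le \liminf_{n\to\infty}\Vert u_n(t)-u_n(s)\Vert_{X_\gamma}
\le \sup_{(v,F)\in K}\sup_{\vert t-s\vert\le\delta}\Vert v(t)-v(s)\Vert_{X_\gamma},
\end{align*}
which tends to $0$ as $\delta\to 0$; hence $(u,G)\in C([0,T],X_\gamma\times\LalphaPlusEinsDual)$. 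This is a short addition using only facts you have already established, so the gap is easily closed, but as written the proof does not verify one of the two defining properties of $Z$.
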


\begin{proof}

	\emph{Step 1:}
	Let $K$ be a subset of $Z$ such that the assumptions $a)$ and $b)$ are fulfilled. 
	Let us choose a sequence $\left((z_n,G_n)\right)_{n\in\N}\subset K.$ 
	
	Let $\left(t_j\right)_{j\in\N}\subset [0,T]\setminus I$ be a sequence that is dense in $[0,T].$ 	
	From the compactness of the embedding $U\hookrightarrow \EA $ and the continuity of the embeddings $\LalphaPlusEinsDual \hookrightarrow\EAdual$ and $X_\gamma\hookrightarrow \EAdual$ we infer that the embeddings $X_\gamma\hookrightarrow U^*$ and $\LalphaPlusEinsDual\hookrightarrow U^*$ are compact. 
	Combining this with Assumption $a)$, we can choose for each $j\in\N$ a  subsequence of $\left((z_n(t_j),G_n(t_j))\right)_{n\in\N}$  again denoted by $\left((z_n(t_j),G_n(t_j))\right)_{n\in\N}$ which converges strongly in $U^*\times U^*$ and weakly in $\EA\times \LalphaPlusEinsDual$. By a diagonalisation argument, one obtains a common subsequence $\left((z_n(t_j),G_n(t_j))\right)_{n\in\N}$ which converges strongly in $U^*\times U^*$ and weakly in $\EA\times \LalphaPlusEinsDual$.\\
	Let $\varepsilon>0$, $h_1\in \EAdual$,  and $h_2\in \LalphaPlusEins$. By density, we can choose $g_1\in X_{-\gamma}$ with $\Vert h_1-g_1 \Vert_{\EAdual}\le \varepsilon/(4r)$.
	Assumption $b)$ yields $\delta>0$ with
	\begin{align}\label{EquicontArzelaAscoli}
	&\sup_{(u,G)\in K} \sup_{\vert t-s\vert\le \delta} \max\Big\{\Vert u(t)-u(s)\Vert_{X_\gamma},\Vert G(t)-G(s)\Vert_{\LalphaPlusEinsDualNorm}\Big\} \nonumber\\&\le\frac{\varepsilon}{6}\min\big\{1,(\Vert{g_1}\Vert_{X_{-\gamma}}+\Vert{h_2}\Vert_{\LalphaPlusEinsKurz})^{-1}\big\}.
	\end{align}
	Let us choose finitely many open balls $U_\delta^1,\dots, U_\delta^L$ of radius $\delta$ covering $[0,T].$ By density, each of these balls contains an element of the sequence $\left(t_j\right)_{j\in\N},$ say $t_{j_l}\in U_\delta^l$ for $l\in \left\{1,\dots, L\right\}.$ In particular, the sequence $\left((z_{n}(t_{j_l}),G_n(t_{j_l}))\right)_{n\in\N}$ is strongly Cauchy in $U^*\times U^*$ and weakly Cauchy in $\EA\times \LalphaPlusEinsDual$ for all $l\in \left\{1,\dots, L\right\}.$
	Hence, there is $n_0\in\N$ such that for $n,m\in \N$ with $n,m\ge n_0$ we have
	\begin{align}\label{CauchyArzelaAscoli}
	\forall l=1,\dots,L\colon\quad \max\Big\{\Vert z_n(t_{j_l})-z_m(t_{j_l})\Vert_{U^*},\Vert G_n(t_{j_l})-G_m(t_{j_l})\Vert_{U^*}\Big\}
	\le \frac{\varepsilon}{6}
	\end{align}
	and 
	\begin{align}\label{CauchyWeak}
		\forall l=1,\dots,L\colon\quad \vert\Real \duality{z_n(t_{j_l})-z_m(t_{j_l})}{h_1}+\Real \duality{G_n(t_{j_l})-G_m(t_{j_l})}{h_2}\vert
		\le \frac{\varepsilon}{6}.
	\end{align}
	Now, we fix $t\in[0,T]$ and take $l\in \{1,\dots, L\}$ with $\vert t_{j_l}-t\vert\le \delta.$ 
	We  use  \eqref{EquicontArzelaAscoli} and \eqref{CauchyArzelaAscoli} to get for $n,m\ge n_0$
	\begin{align*}
	&\max\Big\{\Vert z_n(t)-z_m(t)\Vert_{U^*},\Vert G_n(t)-G_m(t)\Vert_{U^*}\Big\}
		\nonumber\\&\le \max\Big\{\Vert z_n(t)-z_n(t_{j_l})\Vert_{U^*},\Vert G_n(t)-G_n(t_{j_l})\Vert_{U^*}\Big\} 
		\nonumber\\&\quad +\max\Big\{\Vert z_n(t_{j_l})-z_m(t_{j_l})\Vert_{U^*},\Vert G_n(t_{j_l})-G_m(t_{j_l})\Vert_{U^*}\Big\} 
		\nonumber\\&\quad +\max\Big\{\Vert z_m(t_{j_l})-z_m(t)\Vert_{U^*},\Vert G_m(t_{j_l})-G_m(t)\Vert_{U^*}\Big\} 
	\nonumber\\&\lesssim \max\Big\{\Vert z_n(t)-z_n(t_{j_l})\Vert_{X_\gamma},\Vert G_n(t)-G_n(t_{j_l})\Vert_{\LalphaPlusEinsDualNorm}\Big\} 
	\nonumber\\&\quad +\max\Big\{\Vert z_n(t_{j_l})-z_m(t_{j_l})\Vert_{U^*},\Vert G_n(t_{j_l})-G_m(t_{j_l})\Vert_{U^*}\Big\} 
	\nonumber\\&\quad +\max\Big\{\Vert z_m(t_{j_l})-z_m(t)\Vert_{X_\gamma},\Vert G_m(t_{j_l})-G_m(t)\Vert_{\LalphaPlusEinsDualNorm}\Big\} 
	\nonumber\\&\le \frac{\varepsilon}{6}+\frac{\varepsilon}{6}+\frac{\varepsilon}{6}=\frac{\varepsilon}{2}.
	\end{align*}
		The estimates \eqref{EquicontArzelaAscoli} and \eqref{CauchyWeak} further yield for $n,m\ge n_0$
	\begin{align*}
		&\big\vert \Real \duality{z_n(t)-z_m(t)}{g_1}+\Real \duality{G_n(t)-G_m(t)}{h_2}\big\vert
		\\&\le \big\vert \Real \duality{z_n(t)-z_n(t_{j_l})}{g_1}+\Real \duality{G_n(t)-G_n(t_{j_l})}{h_2}\big\vert
		\\&\quad+\big\vert \Real \duality{z_n(t_{j_l})-z_m(t_{j_l})}{g_1}+\Real \duality{G_n(t_{j_l})-G_m(t_{j_l})}{h_2}\big\vert
		\\&\quad+\big\vert \Real \duality{z_m(t_{j_l})-z_m(t)}{g_1}+\Real \duality{G_m(t_{j_l})-G_m(t)}{h_2}\big\vert
				\\&\le  \Vert z_n(t)-z_n(t_{j_l})\Vert_{X_\gamma} \Vert{g_1}\Vert_{X_{-\gamma}}+\Vert{G_n(t)-G_n(t_{j_l})}\Vert_{\LalphaPlusEinsDualNorm} \Vert{h_2}\Vert_{\LalphaPlusEinsKurz}
				\\&\quad+\big\vert \Real \duality{z_n(t_{j_l})-z_m(t_{j_l})}{g_1}+\Real \duality{G_n(t_{j_l})-G_m(t_{j_l})}{h_2}\big\vert
				\\&\quad+\Vert z_m(t_{j_l})-z_m(t)\Vert_{X_\gamma} \Vert{g_1}\Vert_{X_{-\gamma}}+\Vert{G_m(t_{j_l})-G_m(t)}\Vert_{\LalphaPlusEinsDualNorm} \Vert{h_2}\Vert_{\LalphaPlusEinsKurz}
								\\&\le \frac{\varepsilon}{6}\min\big\{1,(\Vert{g_1}\Vert_{X_{-\gamma}}+\Vert{h_2}\Vert_{\LalphaPlusEinsKurz})^{-1}\big\}  \big(\Vert{g_1}\Vert_{X_{-\gamma}}+\Vert{h_2}\Vert_{\LalphaPlusEinsKurz}\big)
								\\&\quad+\frac{\varepsilon}{6}
								+\frac{\varepsilon}{6}\min\big\{1,(\Vert{g_1}\Vert_{X_{-\gamma}}+\Vert{h_2}\Vert_{\LalphaPlusEinsKurz})^{-1}\big\}  \big(\Vert{g_1}\Vert_{X_{-\gamma}}+\Vert{h_2}\Vert_{\LalphaPlusEinsKurz}\big)
								\\&\le \frac{\varepsilon}{2}.
	\end{align*}
	We conclude 	for $n,m\ge n_0$
	\begin{align*}
		&\big\vert \Real \duality{z_n(t)-z_m(t)}{h_1}+\Real \duality{G_n(t)-G_m(t)}{h_2}\big\vert
		\nonumber\\&\le \big\vert \Real \duality{z_n(t)-z_m(t)}{g_1}+\Real \duality{G_n(t)-G_m(t)}{h_2}\big\vert+\big\vert \Real \duality{z_n(t)-z_m(t)}{h_1-g_1}\big\vert
		\nonumber\\&\le\frac{\varepsilon}{2}+2 \Vert{h_1-g_1}\Vert_\EAdual\, \sup_{n\in\N}\sup_{t\in [0,T]}\Vert z_n(t)\Vert_\EA 
		\nonumber\\&\le\frac{\varepsilon}{2}+\frac{\varepsilon}{4r}2 r\le \varepsilon.
	\end{align*}
	In conclusion, we deduce for $n,m\ge n_0$
		\begin{align*}\label{Arzela2}
		&p_U(z_n-z_m,G_n-G_m)=\sup_{t\in [0,T]}\max\Big\{\Vert z_n(t)-z_m(t)\Vert_{U^*},\Vert G_n(t)-G_m(t)\Vert_{U^*}\Big\}\lesssim \varepsilon
		\end{align*}
	and 
		\begin{align*}
		&p_{(h_1,h_2)}(z_n-z_m,G_n-G_m)=\sup_{t\in [0,T]}\big\vert \Real \duality{z_n(t)-z_m(t)}{h_1}+\Real \duality{G_n(t)-G_m(t)}{h_2}\big\vert\le \varepsilon.
		\end{align*}
	This means that $\left((z_n,G_n)\right)_{n\in\N}$ is a Cauchy sequence in $C([0,T],U^*\times U^*)$ and in $C_w([0,T],\EA\times \LalphaPlusEinsDual)$.
	Hence, we obtain $(z,G)\in C([0,T],U^*\times U^*)$   and $(w,\mathcal{G})\in C_w([0,T],\EA\times \LalphaPlusEinsDual)$ with
	\begin{align*}
		p_{U}(z_n-z,G_n-G)\to 0 \qquad \text{and}\qquad p_{h}(z_n-w,G_n-\mathcal{G})\to 0, \qquad \forall \,h\in \EAdual\times \LalphaPlusEins
	\end{align*}
	 as $n\to \infty$.
	
	\emph{Step 2:} It is now sufficient to show that $(w,\mathcal{G})\in C([0,T],X_\gamma\times \LalphaPlusEinsDual)$ and $(z,G)=(w,\mathcal{G})$. Steps 1 and 2 prove that there is a subsequence $\left((z_n,G_n)\right)_{n\in\N}$ such that for all $t\in [0,T]$ we have 
	$(z_n(t),G_n(t))\rightharpoonup(z(t),G(t))$ in $U^*\times U^*$ and $(z_n(t),G_n(t))\rightharpoonup(w(t),\mathcal{G}(t))$ in $\EA\times \LalphaPlusEinsDual$. 
	We infer $(w(t),\mathcal{G}(t))=(z(t),G(t))\in \EA\times \LalphaPlusEinsDual$ for all $t\in [0,T].$ Moreover, we deduce for all $t,s\in [0,T]$ that 
	\begin{align*}
		(z_n(t)-z_n(s),G_n(t)-G_n(s))\rightharpoonup(w(t)-w(s),\mathcal{G}(t)-\mathcal{G}(s))
	\end{align*}
	in $X_\gamma\times \LalphaPlusEinsDual$. Consequently, for all $t,s\in [0,T]$ with $\vert t-s\vert\le \delta$ we have 
	\begin{align*}
		&\max\Big\{\Vert w(t)-w(s)\Vert_{X_\gamma},\Vert \mathcal{G}(t)-\mathcal{G}(s)\Vert_{\LalphaPlusEinsDualNorm}\Big\}
		\\&\le \liminf_{n\to\infty}\Big[\max\Big\{\Vert z_n(t)-z_n(s)\Vert_{X_\gamma},\Vert G_n(t)-G_n(s)\Vert_{\LalphaPlusEinsDualNorm}\Big\}\Big]
		\\&\le \sup_{(v,F)\in K} \sup_{\vert t-s\vert\le \delta} \max\Big\{\Vert v(t)-v(s)\Vert_{X_\gamma},\Vert F(t)-F(s)\Vert_{\LalphaPlusEinsDualNorm}\Big\}\le \frac{\varepsilon}{6}.
	\end{align*}
	This yields the continuity of $t\mapsto (w(t),\mathcal{G}(t))$ in $X_\gamma\times \LalphaPlusEins$,
which finally proves the assertion.
\end{proof}

As a consequence of the deterministic compactness criterion from Proposition~\ref{CompactnessDeterministic}, we get the following tightness result.

\begin{Korollar}\label{TightnessCriterion}
	Let $\big((u_n,G_n)\big)_{n\in\N}$ be a sequence of random variables in $(Z,\mathcal{Z})$ satisfying the Aldous condition in $X_\gamma\times \LalphaPlusEinsDual$ and
	\begin{align*}
	\sup_{n\in\N} \E \left[\Vert (u_n,G_n)\Vert_{L^\infty(0,T;\EA\times \LalphaPlusEinsDualNorm)}^2\right] <\infty.
	\end{align*}
	Then the sequence of laws $\left({\Prob}^{(u_n,G_n)}\right)_{n\in\N}$ is tight in $(Z,\mathcal{Z}),$ i.e. for every $\varepsilon>0$ there is a sequentially compact set $K_\varepsilon\subset Z$ with
	\begin{align*}
	\forall \, n\in\N\colon\quad \Prob^{(u_n,G_n)}(K_\varepsilon)\ge 1- \varepsilon.
	\end{align*}
\end{Korollar}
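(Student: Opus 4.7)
My plan is to construct, for each $\varepsilon>0$, a subset $K_\varepsilon\subset Z$ which satisfies the hypotheses of the deterministic compactness criterion in Proposition~\ref{CompactnessDeterministic} and for which $\Prob\{(u_n,G_n)\in K_\varepsilon\}\ge 1-\varepsilon$ uniformly in $n\in\N$. Given Proposition~\ref{CompactnessDeterministic}, such a $K_\varepsilon$ is automatically sequentially compact in $(Z,\mathcal{Z})$, which is exactly the required tightness.

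The first ingredient is a uniform boundedness set. By the Chebyshev inequality and the assumed second moment bound there exists $R_\varepsilon>0$ with
\begin{align*}
\sup_{n\in\N}\Prob\bigl\{\Vert(u_n,G_n)\Vert_{L^\infty(0,T;\EA\times\LalphaPlusEinsDualNorm)}>R_\varepsilon\bigr\}\le \frac{\varepsilon}{2}.
\end{align*}
This handles assumption (a) of Proposition~\ref{CompactnessDeterministic} on the set $B_\varepsilon:=\{(u,G)\in Z:\sup_{t\in[0,T]}\max(\Vert u(t)\Vert_\EA,\Vert G(t)\Vert_\LalphaPlusEinsDualNorm)\le R_\varepsilon\}$.

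The second and main ingredient is equicontinuity. The key step is a classical Aldous-type bootstrap: passing from the condition stated at \emph{individual} stopping times to a bound on the full modulus of continuity
\begin{align*}
w_{T,X_\gamma\times\LalphaPlusEinsDualNorm}(u,G;\delta):=\sup_{\substack{s,t\in[0,T]\\ |t-s|\le\delta}}\max\bigl\{\Vert u(t)-u(s)\Vert_{X_\gamma},\Vert G(t)-G(s)\Vert_{\LalphaPlusEinsDualNorm}\bigr\}.
\end{align*}
The standard statement (see, e.g., M\'etivier's book or Lemma 9 in Joffe--M\'etivier) says that the Aldous condition in $X_\gamma\times\LalphaPlusEinsDual$ together with tightness of the one-dimensional marginals implies that for every $\eta>0$,
\begin{align*}
\lim_{\delta\to 0}\sup_{n\in\N}\Prob\bigl\{w_{T,X_\gamma\times\LalphaPlusEinsDualNorm}(u_n,G_n;\delta)>\eta\bigr\}=0.
\end{align*}
The one-dimensional marginal tightness follows from the moment bound combined with the compactness of $\EA\times\LalphaPlusEinsDual\hookrightarrow X_\gamma\times\LalphaPlusEinsDual$ (actually only continuous, so one embeds via a compact Hilbert $U^{*}$ and uses Chebyshev). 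I expect this step to be the main technical hurdle, but since the argument is entirely classical and does not depend on the NLS structure, it can be lifted verbatim from the cited literature.

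Equipped with this, I choose sequences $\eta_k\downarrow 0$ and $\delta_k\downarrow 0$ such that
\begin{align*}
\sup_{n\in\N}\Prob\bigl\{w_{T,X_\gamma\times\LalphaPlusEinsDualNorm}(u_n,G_n;\delta_k)>\eta_k\bigr\}\le \frac{\varepsilon}{2^{k+1}}
\end{align*}
for every $k\in\N$, set
\begin{align*}
A_\varepsilon:=\bigcap_{k\in\N}\bigl\{(u,G)\in Z\colon w_{T,X_\gamma\times\LalphaPlusEinsDualNorm}(u,G;\delta_k)\le \eta_k\bigr\},
\end{align*}
and finally take $K_\varepsilon:=B_\varepsilon\cap A_\varepsilon$. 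By construction $K_\varepsilon$ is uniformly bounded in the sense of assumption (a), while the defining bounds with $\eta_k\to 0$ force the uniform modulus on $K_\varepsilon$ to vanish as $\delta\to 0$, which is precisely assumption (b). Proposition~\ref{CompactnessDeterministic} then yields sequential compactness of $K_\varepsilon$ in $(Z,\mathcal{Z})$, and a union bound gives $\sup_n\Prob\{(u_n,G_n)\notin K_\varepsilon\}\le\varepsilon/2+\sum_{k}\varepsilon/2^{k+1}\le\varepsilon$, completing the proof.
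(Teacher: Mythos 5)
Your proposal is correct and follows essentially the same route as the paper: Chebyshev for the uniform-boundedness set, the standard Aldous-to-equicontinuity bootstrap (the paper cites Lemma 4.4 of \cite{ExistencePaper} for exactly this step, so no marginal tightness is needed — the Aldous condition alone yields the high-probability equicontinuity set), and then Proposition~\ref{CompactnessDeterministic} on the intersection. The only cosmetic difference is that the paper takes the closure of $A\cap B$ in $Z$ to pass from sequential relative compactness to sequential compactness, a point you should add (or argue that your $K_\varepsilon$ is already closed via the lower semicontinuity of the norms and of the modulus of continuity under convergence in $\mathcal{Z}$).
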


\begin{proof}
	Let $\varepsilon>0$, let 
	\begin{align*}
	R_1:= \left(\frac{2}{\varepsilon} \sup_{n\in\N} \E \left[ \Vert (u_n,G_n)\Vert_{L^\infty(0,T;\EA\times \LalphaPlusEinsDualNorm)}^2\right]\right)^{\frac{1}{2}},
	\end{align*}
	and let 
	\begin{align*}
	B:= \left\{(u,G)\in L^\infty(0,T;\EA\times \LalphaPlusEinsDual): \Vert (u,G)\Vert_{L^\infty(0,T;\EA\times \LalphaPlusEinsDualNorm)}\le R_1 \right\}.
	\end{align*}
	Using the Tschebyscheff inequality, we obtain
	\begin{align*}
	\Prob\left\{ \Vert (u_n,G_n)\Vert_{L^\infty(0,T;\EA\times \LalphaPlusEinsDualNorm)}> R_1\right\}\le \frac{1}{R_1^2}\E \left[\Vert (u_n,G_n)\Vert_{L^\infty(0,T;\EA\times \LalphaPlusEinsDualNorm)}^2\right]\le \frac{\varepsilon}{2}.
	\end{align*}	
	By Lemma 4.4 in \cite{ExistencePaper}, one can use the Aldous condition  to get a Borel subset $A$ of $C([0,T],X_\gamma\times \LalphaPlusEinsDual)$ with
	$\inf_{n\in\N}\Prob^{(u_n,G_n)}\left(A\right)\ge 1-\frac{\varepsilon}{2}$ and
	\begin{align*}
	\lim_{\delta\to 0} \sup_{u\in A} \sup_{\vert t-s\vert\le \delta}\max\Big\{ \Vert u(t)-u(s)\Vert_{X_\gamma}, \Vert G(t)-G(s)\Vert_{\LalphaPlusEinsDualNorm}\Big\}=0.
	\end{align*}
	From the Strauss Lemma (cf., e.g., Lemma A.3 in \cite{ExistencePaper}) and $X_\gamma\hookrightarrow \EA,$ we infer
	\begin{align*}
	A\cap B\subset C([0,T],X_\gamma\times \LalphaPlusEinsDual)\cap C_w([0,T],\EA\times \LalphaPlusEinsDual)=Z.
	\end{align*}
	We define $K:= \overline{A\cap B}$ where the closure is understood in $Z.$ The set $K$ is compact in $Z$ by Proposition $\ref{CompactnessDeterministic}$ and we can estimate
	\begin{align*}
	\Prob^{(u_n,G_n)}(K)\ge \Prob^{(u_n,G_n)}\left(A\cap B\right)\ge \Prob^{(u_n,G_n)}\left(A\right)-\Prob^{(u_n,G_n)}\left( B^c\right)\ge 1-\frac{\varepsilon}{2}-\frac{\varepsilon}{2}=1-\varepsilon,\qquad n\in\N.
	\end{align*}
	This completes the proof of Corollary~\ref{TightnessCriterion}.
\end{proof}

\section{Passing to the limit in the truncated equation}

Based on the results in the previous sections, we would like to pass to the limit in equation \eqref{galerkinEquation} in order to prove our main result.
Crucial for this strategyv is Jakubowski's version of the Prokhorov-Skorohod procedure in nonmetric spaces. We state this result in the form of \cite{OndrejatWave} and refer to  \cite{Jakubowski}  for the original source.  Starting from \cite{BrzezniakOndrejat}, this theorem has been frequently to prove existence theorems for SPDE.

\begin{Prop}[Skorohod-Jakubowski]\label{SkohorodJakubowski}\index{Skorohod-Jakubowski-Theorem}
	Let $\mathcal{X}$ be a topological space such that there is a sequence of continuous functions $f_m: \mathcal{X}\to \R$ that separates points of $\mathcal{X}.$ Let $\mathcal{A}$ be the $\sigma$-algebra  generated by $\left(f_m\right)_m$. Then, 
	\begin{enumerate}[(i)]
		\item every compact subset of $\mathcal{X}$ is metrizable,
		\item every Borel subset of a $\sigma$-compact set in $\mathcal{X}$ belongs to $\mathcal{A}$, 
		\item every probability measure supported by a $\sigma$-compact set in $\mathcal{X}$ has a unique Radon extension to the Borel $\sigma$-algebra on $\mathcal{X}$, and
		\item for every tight sequence $\left(\mu_n\right)_{n\in\N}$  of probability measures on $\left(\mathcal{X}, \mathcal{A}\right)$ there exist a subsequence $\left(\mu_{n_k}\right)_{k\in\N},$ a probability space $(\tilde{\Omega},\tilde{\Filtration},\tildeProb)$, and Borel measurable random variables $X_k, X\colon \tilde{\Omega}\to \mathcal{X}$, $k\in\N$, with $\tildeProb^{X_k}=\mu_{n_k}$ for $k\in\N$ and $X_k \to X$ $\tildeProb$-almost surely for $k\to \infty.$
	\end{enumerate}
\end{Prop}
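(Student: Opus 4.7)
The strategy is to use the point-separating sequence $(f_m)_{m\in\N}$ to build a continuous injection $\Phi\colon\mathcal{X}\to\R^{\N}$, $\Phi(x):=(f_m(x))_{m\in\N}$, and to reduce each of the four assertions to a classical fact on the Polish space $\R^\N$. By construction the pullback $\Phi^{-1}(\mathcal{B}(\R^\N))$ coincides with $\mathcal{A}$, so $\Phi$ is $\mathcal{A}$-Borel measurable. All subsequent arguments revolve around transferring objects between $\mathcal{X}$ and $\R^\N$ through $\Phi$.

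Items (i)--(iii) follow quickly from this set-up. For (i), the $f_m$ separate points, so any compact $K\subset\mathcal{X}$ is Hausdorff, and $\Phi|_K$ is a continuous bijection onto the compact Hausdorff set $\Phi(K)\subset\R^\N$; such a map is automatically a homeomorphism, and $K$ inherits a metric from $\R^\N$. For (ii), cover a $\sigma$-compact set by compacts $(K_j)_{j\in\N}$; on each $K_j$ the Borel $\sigma$-algebra coincides with $\mathcal{A}\cap K_j$ via the homeomorphism in (i), and the countable union lies in $\mathcal{A}$. Item (iii) is then the standard extension of a measure already defined on the Borel sets of its $\sigma$-compact support to a unique Radon Borel measure via inner regularity on compacts.

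The real work, and the main obstacle, is (iv). Push the tight sequence forward by $\Phi$: the measures $\nu_n:=\mu_n\circ\Phi^{-1}$ are Borel probability measures on $\R^\N$, and tightness of $(\mu_n)_n$ combined with continuity of $\Phi$ transfers to tightness of $(\nu_n)_n$ on the Polish space $\R^\N$. The classical Prokhorov and Skorohod representation theorems then yield a subsequence, a probability space $(\tilde\Omega,\tilde{\mathcal{F}},\tilde{\Prob})$, and $\R^\N$-valued random variables $Y_k\to Y$ $\tilde{\Prob}$-almost surely with $\tilde{\Prob}^{Y_k}=\nu_{n_k}$. The delicate step, and the heart of the obstacle, is the measurable lifting back to $\mathcal{X}$: by tightness one selects compact sets $K_j\subset\mathcal{X}$ with $\sup_n\mu_n(\mathcal{X}\setminus K_j)\le 2^{-j}$, so that $\Phi(K_j)$ is compact (hence closed) in $\R^\N$, the portmanteau theorem applied to each closed set $\Phi(K_j)$ gives $\tilde{\Prob}(Y\in\Phi(K_j))\ge 1-2^{-j}$, and all $Y_k$ take values in $\bigcup_j\Phi(K_j)\subset\Phi(\mathcal{X})$ almost surely. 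By (i), $\Phi^{-1}$ is continuous on each $\Phi(K_j)$, hence Borel measurable on the $F_\sigma$-set $\bigcup_j\Phi(K_j)$; setting $X_k:=\Phi^{-1}(Y_k)$ and $X:=\Phi^{-1}(Y)$ produces $\mathcal{X}$-valued random variables with the required laws, and a diagonal/subsequence argument based on continuity of $\Phi^{-1}$ on each compact piece upgrades the convergence $Y_k\to Y$ in $\R^\N$ to $X_k\to X$ in $\mathcal{X}$ almost surely. The non-trivial point here is precisely this measurable selection of the inverse, which is what makes items (i)--(iii) genuinely used rather than cosmetic.
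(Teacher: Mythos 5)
The paper does not prove this proposition at all: it is quoted verbatim from Ondrej\'at's formulation of Jakubowski's theorem, with the proof delegated to \cite{Jakubowski}. So there is no in-paper argument to compare against, and your proposal has to be judged on its own. Your overall strategy --- embed $\mathcal{X}$ into $\R^{\N}$ via $\Phi=(f_m)_{m}$, observe $\mathcal{A}=\Phi^{-1}(\mathcal{B}(\R^\N))$, and work on the Polish image --- is exactly the right framework, and your treatment of (i)--(iii) is essentially correct (continuous bijections from compact to Hausdorff are homeomorphisms; $\mathcal{B}(K_j)=\mathcal{B}(\mathcal{X})\cap K_j=\mathcal{A}\cap K_j$; inner regularity on compacts for the Radon extension).

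The gap is in the last step of (iv), and it is precisely the non-trivial content of Jakubowski's theorem. You have $Y_k\to Y$ almost surely in $\R^\N$, with all values in $L:=\bigcup_j\Phi(K_j)$, and you set $X_k=\Phi^{-1}(Y_k)$, $X=\Phi^{-1}(Y)$. But $\Phi^{-1}$, while continuous on each compact piece $\Phi(K_j)$, need not be sequentially continuous on the union $L$: if $Y_k(\omega)\in\Phi(K_{j_k})$ with $j_k\to\infty$, then $Y_k(\omega)\to Y(\omega)$ in $\R^\N$ gives no control on $X_k(\omega)$ in the (possibly much finer) topology of $\mathcal{X}$. Your ``diagonal/subsequence argument'' cannot repair this, because (iv) demands almost sure convergence of the \emph{full} subsequence $X_k\to X$, and passing to $\omega$-dependent further subsequences is not available. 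The correct fix --- and the heart of Jakubowski's proof --- is to build the representation so that, almost surely, there is a single index $j(\omega)$ with $Y(\omega)$ and \emph{all} $Y_k(\omega)$ (at least eventually) contained in the same $\Phi(K_{j(\omega)})$; only then does the homeomorphism property on that one compact set transfer the convergence to $\mathcal{X}$. Achieving this requires a bespoke layered construction (conditioning the laws on the nested sets $\Phi(K_j)$ and coupling the layer choice across $k$ via a common uniform variable), not an off-the-shelf application of the classical Skorokhod representation followed by composition with $\Phi^{-1}$.
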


As in the previous section, $U$ is a separable Hilbert space which is compactly embedded and dense in $\EA$, 
 we denote
\begin{align}
Z= C_w([0,T],\EA\times \LalphaPlusEinsDual)\cap C([0,T],X_\gamma\times \LalphaPlusEinsDual),
\end{align}
and we define the topology $\mathcal{Z}$ on $Z$ by the family of seminorms $P$ from \eqref{seminorms}.

		\begin{Prop} \label{PropAlmostSureConvergence}
			Let $\left(u_n\right)_{n\in\N}$ be the sequence of solutions to the Galerkin equation $\eqref{galerkinEquation}.$
Then, there are a subsequence $\left(u_{n_k}\right)_{k\in\N}$, a probability space $(\tilde{\Omega},\tilde{\F},\tilde{\Prob})$ and Borel measurable random variables
				\begin{align}
					\big((v_k,\tilde{F}_k)\big)_{k\in\N}, (v,\tilde{F}):\tilde{\Omega} \rightarrow Z
				\end{align}
				which satisfy that 
				\begin{align}
					\tilde{\Prob}^{(v_k,\tilde{F}_k)}=\Prob^{(u_{n_k},F(u_{n_k}))}\qquad \text{and}\qquad (v_k,\tilde{F}_k)\to (v,\tilde{F})\quad \text{$\tilde{\Prob}$-a.s.  in $Z$}.
				\end{align}
%
		\end{Prop}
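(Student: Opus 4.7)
The plan is to apply the Skorohod--Jakubowski theorem (Proposition~\ref{SkohorodJakubowski}) to the sequence of laws $\bigl(\Prob^{(u_n,F(u_n))}\bigr)_{n\in\N}$ on $(Z,\mathcal{Z})$. Two preparations are needed: first, tightness of this sequence, and second, the existence of a countable family of continuous functions separating points of $(Z,\mathcal{Z})$.

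For tightness, I would invoke Corollary~\ref{TightnessCriterion}. Its hypotheses are precisely what the preceding sections supply. The uniform $L^2(\Omega;L^\infty(0,T;\EA\times L^{(\alpha+1)/\alpha}))$-bound for $(u_n,F(u_n))$ follows by combining the defocusing energy estimate in Proposition~\ref{EstimatesGalerkinSolutionDefocusing} (or the focusing analogue Proposition~\ref{EstimatesGalerkinSolutionFocusing}) with the nonlinearity bound $\norm{F(u_n)}_{L^{(\alpha+1)/\alpha}}\lesssim \norm{u_n}_{L^{\alpha+1}}^{\alpha}\lesssim \norm{u_n}_{\EA}^{\alpha}$ from Assumption~\ref{nonlinearAssumptions}(i) and the embedding $\EA\hookrightarrow L^{\alpha+1}$. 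The Aldous condition in $X_\gamma\times L^{(\alpha+1)/\alpha}$ is Proposition~\ref{AldousConditionTrunction}. Corollary~\ref{TightnessCriterion} then produces, for each $\varepsilon>0$, a sequentially compact set $K_\varepsilon\subset Z$ with $\Prob^{(u_n,F(u_n))}(K_\varepsilon)\ge 1-\varepsilon$ for all $n\in\N$.

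For the separability hypothesis of Jakubowski's theorem, I would construct the separating family explicitly. Since $U$ is separable, $C([0,T],U^*\times U^*)$ is a Polish space; pick a countable dense family $(g_j)_{j\in\N}$ in its dual and set $\Phi_j(u,G):=\duality{(u,G)}{g_j}$. This already separates points whose $C([0,T],U^*\times U^*)$-images differ. To separate the remaining points (recalling $Z$ carries weak-star type information), use that $\EAdual$ and $L^{\alpha+1}$ are separable, pick countable dense sequences $(h^{(1)}_j)_j\subset \EAdual$ and $(h^{(2)}_j)_j\subset L^{\alpha+1}$, fix a countable dense $(t_k)\subset [0,T]$, and consider the continuous functionals $(u,G)\mapsto \duality{u(t_k)}{h^{(1)}_j}_{\EA,\EAdual}$ and $(u,G)\mapsto \duality{G(t_k)}{h^{(2)}_j}_{L^{(\alpha+1)/\alpha},L^{\alpha+1}}$. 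Together with the $\Phi_j$'s, this yields a countable family of $\mathcal{Z}$-continuous, point-separating functions on $Z$.

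Having verified both hypotheses, Proposition~\ref{SkohorodJakubowski}(iv) produces a subsequence $(u_{n_k},F(u_{n_k}))_{k\in\N}$, a probability space $(\tilde\Omega,\tilde\F,\tilde\Prob)$, and Borel random variables $(v_k,\tilde F_k),(v,\tilde F)\colon\tilde\Omega\to Z$ with $\tilde\Prob^{(v_k,\tilde F_k)}=\Prob^{(u_{n_k},F(u_{n_k}))}$ and $(v_k,\tilde F_k)\to(v,\tilde F)$ $\tilde\Prob$-a.s.\ in $(Z,\mathcal{Z})$. The main subtlety — and the step to treat carefully — is the verification of the separability hypothesis for the nonmetric space $(Z,\mathcal Z)$, since the weak topology on the bounded set in $\EA\times L^{(\alpha+1)/\alpha}$ is not a priori metrizable; the argument above sidesteps this by replacing the topology with countably many continuous seminorms which still separate points of $Z$.
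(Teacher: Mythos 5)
Your proposal is correct and follows essentially the same route as the paper: tightness of $\big(\Prob^{(u_n,F(u_n))}\big)_n$ via Corollary~\ref{TightnessCriterion} (fed by Propositions~\ref{EstimatesGalerkinSolutionDefocusing}, \ref{EstimatesGalerkinSolutionFocusing}, \ref{AldousConditionTrunction}), a countable point-separating family of $\mathcal{Z}$-continuous functionals, and then item~(iv) of Proposition~\ref{SkohorodJakubowski}. The paper's separating family is simply $\Real\duality{(u(t_n),G(t_n))}{\psi_k}_{U^*\times U^*,U\times U}$ for dense $(t_n)\subset[0,T]$ and $(\psi_k)$ dense in the unit ball of $U\times U$; your extra appeal to a ``countable dense family in the dual of $C([0,T],U^*\times U^*)$'' is not literally available (that dual is not norm-separable) but is also unnecessary, since the explicit time-evaluation pairings you list already separate points of $Z$ by continuity into $U^*\times U^*$.
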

\begin{proof}
	Let $(\psi_k)_{k\in\N}\subset U\times U$ be a sequence which is dense in the unit ball of $U\times U$ and $(t_n)_{n\in\N}\subset [0,T]$ be a sequence which is dense in $[0,T]$. 
	Then, we define $f_{n,k}\colon Z\to \R$ by 
	\begin{align*}
		f_{n,k}(u,G)=\Real \big\langle{(u(t_n), G(t_n))},{\psi_k}\big\rangle_{U^*\times U^*, U\times U}
	\end{align*}
	for $(u,G)\in Z$, $n,k\in\N$. These functions are continuous by the choice of seminorms on $Z$. 
	Let us take $(u_1,G_1), (u_2,G_2)\in Z$ with $f_{n,k}(u_1,G_1)=f_{n,k}(u_2,G_2)$. The density of $(\psi_k)_{k\in\N}$ yields for all $n\in\N$, $\psi \in U\times U$ with $\norm{\psi}_{U\times U}\le 1$ that 
	\begin{align*}
		\Real \big\langle{(u_1(t_n)-u_2(t_n), G_1(t_n)-G_2(t_n))},{\psi}\big\rangle_{U^*\times U^*, U\times U}=0.
	\end{align*}
	Hence, we obtain for all $n\in\N$ that 
	\begin{align*}
		\big\Vert(u_1(t_n)-u_2(t_n), G_1(t_n)-G_2(t_n))\big\Vert_{U^*\times U^*}=0.
	\end{align*}
	Since $(u_1,G_1)$ and $(u_2,G_2)$ are both contained in $C([0,T],U^*\times U^*)$, we infer $(u_1(t),G_1(t))=(u_2(t),G_2(t))$ for all $t\in [0,T]$. We have thus proved that the sequence $(f_{n,k})_{n,k\in\N}$ separates points of $Z$. In particular, item (i) in Proposition~\ref{SkohorodJakubowski} implies that sequential compactness and compactness coincide in $Z$. 	
	Combining this with the  Propositions~\ref{EstimatesGalerkinSolutionDefocusing}, \ref{EstimatesGalerkinSolutionFocusing}, and \ref{AldousConditionTrunction} and Corollary~\ref{TightnessCriterion} shows that the law of $(u_n,F(u_n))_{n\in\N}$ is tight in $(Z,\mathcal{Z})$.  Item (iv) in Proposition~\ref{SkohorodJakubowski} then proves the assertion.
\end{proof}


As a Corollary of the previous result and the uniform estimates for $u_n$, $n\in\N$, we get: 
		\begin{Prop} \label{NewAprioriBounds}
			\begin{enumerate}
				\item[a)] We have $(v_k,\tilde{F}_k) \in  C([0,T],H_{n_k}\times \LalphaPlusEinsDual)$ $\tilde{\Prob}$-a.s.\ and for all $r\in [1,\infty),$ there is
				$C>0$
				with
				\begin{align*}
				\sup_{k\in\N} \Etilde \Big[ \sup_{t\in [0,T]}\norm{v_k(t)}_\EA^r\Big]\le C\qquad \text{and}\qquad \sup_{k\in\N}\Etilde \Big[ \sup_{t\in [0,T]}\norm{\tilde{F}_k(t)}_{\LalphaPlusEinsDualNorm}^{r}\Big]\le C.
				\end{align*}
				\item[b)] For all $r\in [1,\infty),$ we have
				\begin{align*}
				\Etilde \left[ \norm{v}_\LinftyEA^r\right]\le C
				\qquad \text{and}\qquad \Etilde \Big[ \norm{\tilde{F}}_{L^\infty(0,T;\LalphaPlusEinsDualNorm)}^{r}\Big]\le C
				\end{align*}
				with the same constant $C>0$ as in $a).$
			\end{enumerate}
		\end{Prop}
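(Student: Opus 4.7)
The plan is to deduce both parts from three ingredients already prepared: the equality in law $\tilde{\Prob}^{(v_k,\tilde{F}_k)}=\Prob^{(u_{n_k},F(u_{n_k}))}$ from Proposition~\ref{PropAlmostSureConvergence}, the lower semicontinuity on $(Z,\mathcal{Z})$ of the suprema $\Phi_1,\Phi_2$ from Lemma~\ref{Lemma:LSC}, and the almost sure convergence $(v_k,\tilde{F}_k)\to(v,\tilde{F})$ in $Z$. No new estimate needs to be derived; the content is purely in transporting the bounds from Propositions~\ref{EstimatesGalerkinSolutionDefocusing} and \ref{EstimatesGalerkinSolutionFocusing} through the Jakubowski construction.

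For the support statement in a), I would first note that the original truncated solution satisfies $u_{n_k}(\omega,t)\in H_{n_k}$ for every $(\omega,t)$ by Proposition~\ref{MassEstimateGalerkinSolution}, and that $F(u_{n_k})$ lies in $L^{(\alpha+1)/\alpha}(M)$ pointwise by Assumption~\ref{nonlinearAssumptions}. Hence $(u_{n_k},F(u_{n_k}))\in C([0,T],H_{n_k}\times \LalphaPlusEinsDual)$ almost surely. Since by Lemma~\ref{Lemma:galerkinSpaceClosed} this set is a closed (and in particular Borel measurable) subset of $(Z,\mathcal{Z})$, the equality of laws transfers this property to $(v_k,\tilde{F}_k)$.

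For the uniform bounds in a), the Borel measurability of $\Phi_1$ and $\Phi_2$ (a consequence of lower semicontinuity from Lemma~\ref{Lemma:LSC}b) allows a change of variables:
\[
\tilde{\E}\bigl[\Phi_j(v_k,\tilde{F}_k)^r\bigr]=\E\bigl[\Phi_j(u_{n_k},F(u_{n_k}))^r\bigr],\qquad j\in\{1,2\},\quad k\in\N.
\]
The right-hand sides are bounded uniformly in $k$ by the estimates \eqref{defocusingHoneEstimate}, \eqref{defocusingNonlinearityEstimate} (defocusing case) or the corresponding bounds in Proposition~\ref{EstimatesGalerkinSolutionFocusing} (focusing case), yielding the constant $C$. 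For part b), I would then combine the almost sure convergence $(v_k,\tilde{F}_k)\to(v,\tilde{F})$ in $Z$ with lower semicontinuity to get $\Phi_j(v,\tilde{F})\le\liminf_{k\to\infty}\Phi_j(v_k,\tilde{F}_k)$ pointwise on $\tilde{\Omega}$. Fatou's lemma then gives
\[
\tilde{\E}\bigl[\Phi_j(v,\tilde{F})^r\bigr]\le \liminf_{k\to\infty}\tilde{\E}\bigl[\Phi_j(v_k,\tilde{F}_k)^r\bigr]\le C,
\]
so the constant is inherited from part a), as required.

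I do not anticipate a genuine obstacle here, since all nontrivial work has been done beforehand; the only point that could trip one up is a measurability check, namely that $\Phi_1,\Phi_2$ are Borel on the non-metrizable space $(Z,\mathcal{Z})$. This is, however, already packaged in Lemma~\ref{Lemma:LSC}b, where the suprema were exhibited as countable suprema of continuous functionals by approximating the unit balls of $\EAdual$ and $\LalphaPlusEins$ by sequences in the separable space $U$.
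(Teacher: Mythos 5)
Your argument is correct. Part a) follows the paper's own proof essentially verbatim: closedness of $C([0,T],H_{n_k}\times \LalphaPlusEinsDual)$ in $(Z,\mathcal{Z})$ from Lemma~\ref{Lemma:galerkinSpaceClosed} plus equality of laws for the support statement, and measurability of $\Phi_1,\Phi_2$ via Lemma~\ref{Lemma:LSC} plus the change of variables for the uniform moment bounds. For part b), however, you take a genuinely different and more elementary route. The paper first upgrades the almost sure convergence $v_k\to v$ in $C([0,T],U^*)$ to convergence in $L^r(\tilde{\Omega},L^p(0,T;U^*))$ via Vitali, then extracts weak-star limits $\hat v,\hat F$ in $L^r(\tilde{\Omega},L^\infty(0,T;\cdot))$ by Banach--Alaoglu with the norm bound preserved under weak-star convergence, and finally identifies $v=\hat v$, $\tilde F=\hat F$ by uniqueness of weak-star limits. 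You instead apply the lower semicontinuity of $\Phi_j$ on $(Z,\mathcal{Z})$ directly along the almost surely convergent sequence and conclude by Fatou; since $\sup_{t}\norm{v(t)}_{\EA}$ dominates the $L^\infty(0,T;\EA)$ norm, this yields the claimed bounds with the same constant. Your version is shorter and avoids the duality/measurability bookkeeping of the Bochner spaces $L^r(\tilde\Omega,L^\infty(0,T;\cdot))$; the paper's version has the side benefit of producing the weak-star convergences $v_k\rightharpoonup^* v$ and $\tilde F_k\rightharpoonup^*\tilde F$, which are not needed for the statement of the proposition itself. Both are valid proofs of the stated result.
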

\begin{proof}
	\emph{Step 1.}
	From Lemma~\ref{Lemma:galerkinSpaceClosed}, we infer that $C([0,T],H_{n_k}\times \LalphaPlusEinsDual)$ is a closed subset of $(Z,\mathcal{Z})$. Combining this with $\tilde{\Prob}^{(v_k,\tilde{F}_k)}=\Prob^{(u_{n_k},F(u_{n_k}))}$ (cf. Proposition~\ref{PropAlmostSureConvergence}) and the fact that $(u_{n_k},F(u_{n_k}))\in C([0,T],H_{n_k}\times \LalphaPlusEinsDual)$ $\Prob$-almost surely proves that $(v_k,\tilde{F}_k)\in C([0,T],H_{n_k}\times \LalphaPlusEinsDual)$ $\tilde{\Prob}$-almost surely.
	Let us recall from Propositions~\ref{EstimatesGalerkinSolutionDefocusing} and \ref{EstimatesGalerkinSolutionFocusing} that there exists $C>0$ which satisfies
	\begin{align*}
	\sup_{n\in\N}\E \Big[\sup_{t\in[0,T]} \norm{u_n(t)}_\EA^{r}\Big]\le C\qquad \text{and}\qquad \sup_{n\in\N}\E \Big[\sup_{t\in[0,T]} \norm{F(u_n(t))}_{\LalphaPlusEinsDualNorm}^{r}\Big]\le C.
	\end{align*}
	Next note that Lemma~\ref{Lemma:LSC} ensures that the functions $\Phi_j\colon Z\to [0,\infty)$, $j\in\{1,2\}$, given by 
		\begin{align*}
		\Phi_1(u,G)=\sup_{t\in [0,T]}\norm{u(t)}_{\EA}\qquad \text{and}\qquad \Phi_2(u,G)=\sup_{t\in [0,T]}\norm{G(t)}_{\LalphaPlusEinsDualNorm}
		\end{align*}
	for $(u,G)\in Z$ are lower semicontinuous. In particular $\Phi_1$ and $\Phi_2$ are measurable  and thus, the 
	 identity $\tilde{\Prob}^{(v_k,\tilde{F}_k)}=\Prob^{(u_{n_k},F(u_{n_k}))}$ for all $k\in\N$ yields
	\begin{align*}
		\sup_{k\in\N}\Etilde \Big[\sup_{t\in[0,T]} \norm{v_k(t)}_\EA^{r}\Big]=\sup_{k\in\N}\E \Big[\sup_{t\in[0,T]} \norm{u_{n_k}(t)}_\EA^{r}\Big]\le C
	\end{align*}
	\begin{align*}
		\sup_{k\in\N}\Etilde \Big[\sup_{t\in[0,T]} \norm{\tilde{F}_k(t)}_{\LalphaPlusEinsDualNorm}^{r}\Big]=\sup_{k\in\N}\E \Big[\sup_{t\in[0,T]} \norm{F(u_{n_k}(t))}_{\LalphaPlusEinsDualNorm}^{r}\Big]\le C.
	\end{align*}
	\emph{Step 2.}
	Recall that we have $(v_k,\tilde{F}_k)\to (v,\tilde{F})$ almost surely  in $Z$ as $k\to\infty$. In particular, we obtain $v_k\to v$ and $\tilde{F}_k\to \tilde{F}$ in $C([0,T],U^*)$ as $k\to\infty$.
	By a), we further conclude for all $r,p\in [1,\infty)$ that
	\begin{align*}
		\norm{v_k}_{L^r(\tilde{\Omega}, L^p(0,T;U^*))}^r\lesssim \sup_{k\in\N} \Etilde \left[ \norm{v_k}_\LinftyEA^r\right]<\infty
	\end{align*}
		\begin{align*}
		\text{and}\qquad \norm{\tilde{F}_k}_{L^r(\tilde{\Omega}, L^p(0,T;U^*))}^r\lesssim \sup_{k\in\N} \Etilde \Big[ \norm{\tilde{F}_k}_{L^\infty(0,T;\LalphaPlusEinsDualNorm)}^r\Big]<\infty.
		\end{align*} 
	Vitali yields $v_k\to v$ in $L^r(\tilde{\Omega}, L^p(0,T;U^*))$ and $\tilde{F}_k\to \tilde{F}$ in $L^r(\tilde{\Omega}, L^p(0,T;U^*))$ for all $r,p\in [1,\infty)$. By a) and the Banach-Alaoglu-Theorem, there exist $\hat{v}\in L^r(\tilde{\Omega}, L^\infty(0,T;\EA))$ and $\hat{F}\in L^r(\tilde{\Omega}, L^\infty(0,T;L^{(\alpha+1)/\alpha}(M)))$ such that
	\begin{align*}
		v_k\rightharpoonup^* \hat{v} \quad \text{in $L^r(\tilde{\Omega}, L^\infty(0,T;\EA))$\quad and}\quad \Etilde \left[ \norm{\hat{v}}_\LinftyEA^r\right]\le C,
	\end{align*}
	\begin{align*}
	\tilde{F}_k\rightharpoonup^* \hat{F} \quad \text{in $L^r(\tilde{\Omega}, L^\infty(0,T;L^{(\alpha+1)/\alpha}(M)))$\quad and}\quad \Etilde \Big[ \norm{\hat{F}}_{L^\infty(0,T;L^{(\alpha+1)/\alpha})}^r\Big]\le C.
	\end{align*}
	 By the uniqueness of weak-star limits, we deduce $v=\hat{v}$ and $\tilde{F}=\hat{F}$, which proves the assertion of item~b). 
\end{proof}

		\begin{Lemma}\label{LemmaConvergences}
			Let $(n_k)_{k\in\N}\subset \N$ be a sequence with $n_k\to \infty$ as $k\to \infty$ and
			let $(z,G),(z_k,G_k)\in Z$ satisfy for $k\in\N$ that 
			$(z_k,G_k) \to (z,G)$ in $Z$ for $k\to \infty$. Then, for $t\in[0,T]$ it holds that
			\begin{align}
				z_k(t)&\rightharpoonup z(t) \quad \text{in $\EA$}\label{LemmaConvergences:One}\\
				Az_k(t)&\rightharpoonup Az(t) \quad \text{in $\EAdual$}\label{LemmaConvergences:Two}\\
				\mu_{n_k}(z_k(t))&\rightharpoonup \mu(z(t)) \quad \text{in $H$}\label{LemmaConvergences:Three}\\
				P_{n_k} G_k(t)&\rightharpoonup G(t) \quad \text{in $\EAdual$}\label{LemmaConvergences:Four}
			\end{align}		
			as $k\to \infty$. 	
		\end{Lemma}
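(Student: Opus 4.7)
The plan is to read off \eqref{LemmaConvergences:One}--\eqref{LemmaConvergences:Four} by combining the topological information encoded in $\mathcal{Z}$ with two auxiliary facts: weak--strong duality pairings, and the strong convergence $S_{n_k},P_{n_k}\to\operatorname{Id}$ on individual vectors supplied by Lemma~\ref{convergenceProperty}. First I would note that \eqref{LemmaConvergences:One} is immediate: applying the seminorm $p_{(h_1,0)}$ with $h_1\in E_A^*$ to the convergent sequence $(z_k,G_k)-(z,G)$ gives $\sup_t|\langle z_k(t)-z(t),h_1\rangle_{E_A,E_A^*}|\to 0$, which is exactly pointwise weak convergence in $E_A$. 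The same argument with $p_{(0,h_2)}$ produces weak convergence $G_k(t)\rightharpoonup G(t)$ in $L^{(\alpha+1)/\alpha}$ at each fixed $t$, whence the uniform boundedness principle yields $\sup_k\|z_k(t)\|_{E_A}<\infty$ and $\sup_k\|G_k(t)\|_{L^{(\alpha+1)/\alpha}}<\infty$. For \eqref{LemmaConvergences:Two} I would invoke the self-adjointness of $A$ in the Gelfand triple $E_A\hookrightarrow H\hookrightarrow E_A^*$: for $\phi\in E_A$ one has $A\phi\in E_A^*$, and
\[
\langle Az_k(t),\phi\rangle_{E_A^*,E_A}=\langle z_k(t),A\phi\rangle_{E_A,E_A^*}\to\langle z(t),A\phi\rangle=\langle Az(t),\phi\rangle
\]
directly from \eqref{LemmaConvergences:One}.

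For \eqref{LemmaConvergences:Four}, fix $\phi\in E_A$. Lemma~\ref{convergenceProperty} with $\theta=1/2$ yields $P_{n_k}\phi\to\phi$ in $E_A$, and hence in $L^{\alpha+1}$ by the embedding. Extending the identity in Lemma~\ref{PnPropertiesLzwei}\eqref{PnExtended} from $w\in H$ to $w\in E_A$ via density of $H$ in $E_A^*$, one obtains the symmetry $\langle P_nv,w\rangle_{E_A^*,E_A}=\langle v,P_nw\rangle_{E_A^*,E_A}$ for all $v\in E_A^*$, $w\in E_A$. I would then write
\[
\langle P_{n_k}G_k(t)-G(t),\phi\rangle=\langle G_k(t)-G(t),\phi\rangle+\langle G_k(t)-G(t),P_{n_k}\phi-\phi\rangle+\langle G(t),P_{n_k}\phi-\phi\rangle,
\]
where the first summand tends to zero by the weak convergence of $G_k(t)$, and the remaining two by H\"older together with the uniform bound on $\|G_k(t)\|_{L^{(\alpha+1)/\alpha}}$ and $P_{n_k}\phi\to\phi$ in $L^{\alpha+1}$.

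The main obstacle is \eqref{LemmaConvergences:Three}, where the operators $\mu_{n_k}$ themselves vary with $k$. My strategy is to peel off that extra dependence by exploiting the self-adjointness of $\mu_{n_k}$ on $H$. For $h\in H$ I would use the decomposition
\[
(\mu_{n_k}(z_k(t))-\mu(z(t)),h)_H=(z_k(t),\mu_{n_k}(h)-\mu(h))_H+(z_k(t)-z(t),\mu(h))_H.
\]
The second term vanishes because $\mu(h)\in H$ and $z_k(t)\rightharpoonup z(t)$ in $H$ (transferred from \eqref{LemmaConvergences:One} through the Gelfand triple), while the first is bounded by $\sup_k\|z_k(t)\|_H\cdot\|\mu_{n_k}(h)-\mu(h)\|_H$. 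The auxiliary claim is therefore the strong convergence $\mu_{n_k}(h)\to\mu(h)$ in $H$ for every fixed $h\in H$. To establish it I would argue term by term in the series defining $\mu_{n_k}$: the contractivity $\|S_{n_k}\|_{\mathcal{L}(H)}\le 1$ together with the strong convergence $S_{n_k}\to\operatorname{Id}$ on $H$ from Lemma~\ref{convergenceProperty} propagates through the triple product so that $(S_{n_k}B_mS_{n_k})^2 h\to B_m^2 h$ in $H$ for each $m$; the summand is dominated by $2\|B_m\|_{\mathcal{L}(H)}^2\|h\|_H$, which is summable by Assumption~\ref{stochasticAssumptions}, and dominated convergence in the series finishes the claim and hence \eqref{LemmaConvergences:Three}.
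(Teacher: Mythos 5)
Your proposal is correct and follows essentially the same route as the paper: pointwise weak convergence read off from the seminorms, weak continuity of $A$, the selfadjointness trick to move $\mu_{n_k}$ and $P_{n_k}$ onto the test function, and strong convergence $\mu_{n_k}(h)\to\mu(h)$ in $H$ via termwise convergence of $(S_{n_k}B_mS_{n_k})^2h$ plus dominated convergence of the series. The only cosmetic differences are your three-term splitting for \eqref{LemmaConvergences:Four} (the paper uses two terms) and the fact that the symmetry identity from Lemma~\ref{PnPropertiesLzwei} already applies to $w\in E_A\subset H$, so no density extension is needed.
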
	
		
\begin{proof}
	From the definition of the topology $\mathcal{Z}$ on $Z$, we infer for all $t\in [0,T]$
		\begin{align}\label{LemmaConvergences:ConvergencesZ}
		z_k(t)\rightharpoonup z(t)\quad \text{in $\EA$}\qquad \text{and}\qquad G_k(t)\rightharpoonup G(t) \quad \text{in $L^{(\alpha+1)/\alpha}(M)$}. 
		\end{align}
	This, $A\in \LinearOperatorsTwo{\EA}{\EAdual}$, and the fact that bounded linear operators are weakly continuous show \eqref{LemmaConvergences:One} and \eqref{LemmaConvergences:Two}.
	Using \eqref{LemmaConvergences:ConvergencesZ}, Lemma~\ref{convergenceProperty}, and item~\eqref{PnExtended} in Lemma~\ref{PnPropertiesLzwei}, we deduce for all $\psi\in \EA$
	\begin{align*}
		\vert\dualityReal{P_{n_k} G_k(t)-G(t)}{\psi}\vert&=\vert\dualityReal{ G_k(t)}{P_{n_k}\psi}-\dualityReal{ G(t)}{\psi}\vert
		\\&\le \vert\dualityReal{ G_k(t)-G(t)}{\psi}\vert+\vert\dualityReal{ G_k(t)}{P_{n_k}\psi-\psi}\vert
		\\&\le \vert\dualityReal{ G_k(t)-G(t)}{\psi}\vert+\sup_{k\in\N}\norm{G_k(t)}_{L^{(\alpha+1)/\alpha}} \norm{P_{n_k}\psi-\psi}_\EA
		\\&\to 0 \qquad \text{as $k\to\infty$}.
	\end{align*}
Lemma~\ref{convergenceProperty} proves for all $m\in \N$, $\varphi\in H$ that
\begin{align*}
	\Vert S_n B_m S_n^2 B_m (S_n-I) \varphi\Vert_H\le \Vert B_m\Vert_{\LinearOperators{H}}^2 \Vert(S_n-I) \varphi\Vert_H\to 0,
\end{align*}
\begin{align*}
\Vert S_n B_m (S_n^2-I) B_m \varphi\Vert_H &\le \Vert  B_m\Vert_{\LinearOperators{H}} \Vert(S_n+I)(S_n-I) B_m \varphi\Vert_H
\\&\le 2 \Vert  B_m\Vert_{\LinearOperators{H}} \Vert(S_n-I) B_m \varphi\Vert_H\to 0,
\end{align*}
\begin{align*}
\text{and}\qquad \Vert (S_n-I) B_m^2\varphi\Vert_H\to 0.
\end{align*}
Consequently, we obtain for each $m\in \N$, $\varphi\in H$ that
\begin{align*}
	&\Vert (S_n B_m S_n)^2\varphi- B_m^2\varphi\Vert_H
	\\&\le \Vert S_n B_m S_n^2 B_m (S_n-I) \varphi\Vert_H+ \Vert S_n B_m (S_n^2-I) B_m \varphi\Vert_H+ \Vert (S_n-I) B_m^2\varphi\Vert_H 
	\to 0.
\end{align*}
Next note that $\Vert(S_n B_m S_n)^2\varphi \Vert_H\le \Vert  B_m\Vert_{\LinearOperators{H}}^2 \norm{\varphi}_H$ for $m,n\in\N$, the hypothesis $\sumM \norm{B_m}_{\LinearOperators{H}}^2<\infty$ (cf. Assumption~\ref{stochasticAssumptions}), and Lebesgue's dominated convergence theorem ensure for all $\varphi\in H$ 
	\begin{align*}
	&\norm{\mu_{n}(\varphi)-\mu(\varphi)}_H
	\le \tfrac{1}{2}\sumM \big\Vert (S_n B_m S_n)^2\varphi- B_m^2\varphi\big\Vert_H	\to 0.
	\end{align*}
Moreover, observe that by $\sumM \norm{B_m}_{\LinearOperators{H}}^2<\infty$ (cf. Assumption~\ref{stochasticAssumptions}) and the fact that $S_n$, $n\in\N$, are contractive selfadjoint operators (cf. Lemma~\ref{PaleyLittlewoodLemma}), we conclude that $\mu_{n_k}$ and $\mu$ are bounded selfadjoint operators on $H$.
Thus, we get	 for all $\varphi\in H$ 
	\begin{align*}
		\vert \skpHReal{\mu_{n_k}(z_k(t))-\mu(z(t))}{\varphi}\vert&=\vert \skpHReal{z_k(t)}{\mu_{n_k}(\varphi)}-\skpHReal{z(t)}{\mu(\varphi)}\vert
		\\&\le \vert \skpHReal{z_k(t)-z(t)}{\mu(\varphi)}\vert+\vert \skpHReal{z_k(t)}{\mu_{n_k}(\varphi)-\mu(\varphi)}\vert
		\\&\le \vert \skpHReal{z_k(t)-z(t)}{\mu(\varphi)}\vert+\sup_{k\in\N}\norm{z_k(t)}_H\norm{\mu_{n_k}(\varphi)-\mu(\varphi)}_H
		\\&\to 0 \qquad \text{as $k\to\infty$}.
	\end{align*}
We have therefore proved \eqref{LemmaConvergences:Four}, which completes the proof of Lemma~\ref{LemmaConvergences}.
\end{proof}

	Consider the map $\mathcal{M}_k\colon Z\to C([0,T],\EAdual)$ which satisfies for all $u\in Z$, $t\in [0,T]$ that
	\begin{align*}
	\big(\mathcal{M}_k(u,G)\big)(t):=-u(t)+ S_{n_k} u_0&+ \int_0^t \left[-\im A u(s)-\im P_{n_k} G(s)+\mu_{n_k}(u(s))\right] \df s.
	\end{align*}
Note that $\mathcal{M}_k$ maps to $C([0,T],\EAdual)$ as a consequence of Lemma~\ref{Lemma:LSC}.
\begin{Lemma}\label{continuity:MartingaleMaps}
		Let $0\le s\le t\le T$, $\varphi_1, \varphi_2\in H$, $\psi\in \EA$. Let $\Phi_k\colon Z\to \R$, $k\in\N$, and $\Psi_k\colon Z\to \R$, $k\in\N$, be the functions which satisfy 
		\begin{align*}
		\Phi_k(u,G)=\Real \duality{\mathcal{M}_k(u,G)(t)}{\psi}
		\end{align*}
		\begin{align*}
		\text{and}\qquad\Psi_k(u,G)=\sumM \int_s^t  \skpHReal{\im S_{n_k} B_m S_{n_k} u(r)}{\varphi_1} \skpHReal{\im S_{n_k} B_m S_{n_k} u(r)}{\varphi_2} \df r
		\end{align*}
		for all $k\in\N$. Then, for all $k\in\N$ it holds that $\Phi_k$ and $\Psi_k$ are measurable.
\end{Lemma}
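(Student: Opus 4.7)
The plan is to show that both $\Phi_k$ and $\Psi_k$ are Borel measurable by expressing them as countable pointwise limits of continuous functions on $(Z,\mathcal{Z})$. The crucial observation is that for each fixed $t\in[0,T]$ and $h=(h_1,h_2)\in\EAdual\times\LalphaPlusEins$, the bound $|\Real\duality{u(t)}{h_1}+\Real\duality{G(t)}{h_2}|\le p_h(u,G)$ implies that the pointwise linear evaluations induced by elements of $\EAdual$ (on $u$) and $\LalphaPlusEins$ (on $G$) are continuous on $(Z,\mathcal{Z})$.

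For $\Phi_k$ I would decompose $\duality{\mathcal{M}_k(u,G)(t)}{\psi}$ into its five summands. The terms $\duality{-u(t)}{\psi}$ and $\duality{S_{n_k}u_0}{\psi}$ are immediately continuous in $(u,G)$ (the latter trivially), using $\psi\in\EA\hookrightarrow\EAdual$. For the three time-integrals I would, at fixed $s\in[0,t]$, shift the deterministic operator from $u(s)$ or $G(s)$ onto $\psi$ via selfadjointness: $\duality{-\im Au(s)}{\psi}$ becomes a pairing of $u(s)$ against $\im A\psi\in\EAdual$; $\duality{-\im P_{n_k}G(s)}{\psi}$ becomes a pairing of $G(s)$ against $\im P_{n_k}\psi\in\EA\hookrightarrow\LalphaPlusEins$ (using Lemma~\ref{PnPropertiesLzwei}(c) together with Assumption~\ref{spaceAssumptions}(iii)); and $\duality{\mu_{n_k}(u(s))}{\psi}=\Real\skpH{u(s)}{\mu_{n_k}(\psi)}$ becomes a pairing of $u(s)$ against $\mu_{n_k}(\psi)\in H\subset\EAdual$. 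Thus each integrand is continuous in $(u,G)$ for every fixed $s$.

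To upgrade pointwise-in-$s$ continuity to measurability of the $s$-integrals I would approximate them by Riemann sums along dyadic partitions of $[0,t]$. Each Riemann sum is a finite linear combination of continuous functions on $(Z,\mathcal{Z})$, hence continuous. For fixed $(u,G)\in Z$, weak continuity of $s\mapsto u(s)\in\EA$ together with norm continuity of $s\mapsto G(s)\in\LalphaPlusEinsDual$ makes each integrand continuous in $s$, so the Riemann sums converge pointwise in $(u,G)$ to the integral. The integrals are therefore pointwise limits of continuous functions, hence Borel measurable, and measurability of $\Phi_k$ follows.

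The argument for $\Psi_k$ follows the same scheme. By selfadjointness of $S_{n_k}$ and $B_m$, the linear functional $v\mapsto\Real\skpH{\im S_{n_k}B_mS_{n_k}v}{\varphi_j}$ on $H$ is represented by the fixed element $-\im S_{n_k}B_mS_{n_k}\varphi_j\in H\subset\EAdual$, so $(u,G)\mapsto\Real\skpH{\im S_{n_k}B_mS_{n_k}u(r)}{\varphi_j}$ is continuous on $Z$ for each fixed $r,m,j$. The product of the two such factors is continuous, and the $r$-integral is Borel measurable by the same Riemann-sum argument. Finally, the sum over $m$ is the pointwise limit of its partial sums (each measurable); absolute convergence is guaranteed by $\sumM\Vert B_m\Vert_{\LinearOperators{H}}^2<\infty$ combined with the uniform $H$-bound on $u(r)$ supplied by Lemma~\ref{Lemma:LSC} via $\EA\hookrightarrow H$. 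The main delicate point is the Riemann-sum convergence on the non-metrizable space $(Z,\mathcal{Z})$; it is precisely the interplay between the $C_w$-factor in $Z$ and the explicit test elements in the seminorm family $P$ that makes this step work.
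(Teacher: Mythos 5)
Your proof is correct, and it takes a genuinely different route from the paper's. The paper does not use Riemann sums at all: it introduces the closed sets
$Z_R=\{(u,G)\in Z\colon\max\{\sup_t\norm{u(t)}_{\EA},\sup_t\norm{G(t)}_{\LalphaPlusEinsDualNorm}\}\le R\}$
(closedness follows from the lower semicontinuity established in Lemma~\ref{Lemma:LSC}), writes $\Phi_k=\lim_{R\to\infty}\mathbf{1}_{Z_R}\Phi_k$, and proves that $\Phi_k$ restricted to each $Z_R$ is \emph{continuous}. Continuity on $Z_R$ is obtained by taking a sequence $(u_n,G_n)\to(u,G)$ in $Z_R$, invoking Lemma~\ref{LemmaConvergences} for the pointwise-in-$s$ convergence of the integrands, and then applying Lebesgue's dominated convergence theorem, where the cutoff $R$ supplies the dominating function. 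For $\Psi_k$ the paper packages the inner products into a map $\mathfrak{T}\colon Z_R\to L^2([s,t]\times\N)$, proves $\mathfrak{T}(u_n,G_n)\to\mathfrak{T}(u,G)$ in $L^2$ again via domination, and finishes with the continuity of the $L^2$-inner product. Your argument instead shifts each operator onto the fixed test element ($A\psi$, $P_{n_k}\psi$, $\mu_{n_k}\psi$, $-\im S_{n_k}B_mS_{n_k}\varphi_j$), observes that the resulting pointwise-in-time evaluation is controlled by a single seminorm $p_h$ and hence continuous on \emph{all} of $Z$, and then uses Riemann sums to convert continuity of the integrand into Borel measurability of the integral; the Riemann sums converge pointwise in $(u,G)$ because $u\in C_w([0,T],\EA)$ and $G\in C([0,T],\LalphaPlusEinsDualNorm)$ give time-continuity of the integrand, and Lemma~\ref{Lemma:LSC} gives boundedness. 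What your approach buys is that it never needs to pass to the cutoff sets $Z_R$ or apply dominated convergence along abstract nets/sequences in $Z$, since the Riemann sums are manifestly continuous on the whole space; what the paper's approach buys is reuse of Lemma~\ref{LemmaConvergences} (which is needed later anyway) and a cleaner statement that $\Phi_k|_{Z_R}$ is actually continuous, not merely Borel. One cosmetic remark: the delicate step in your proof is not really the non-metrizability of $(Z,\mathcal{Z})$ — Riemann-sum convergence is a pointwise statement and does not interact with the topology on $Z$ — but rather the correct identification of which fixed test element each operator moves onto, so that the resulting functional is dominated by a single $p_h$-seminorm.
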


\begin{proof}
	\emph{ad $\Phi_k$:} For $R\in \N$, we define 
	\begin{align*}
		Z_R:=\Big\{(u,G)\in Z\colon \max\big\{	\sup_{t\in [0,T]}\norm{u(t)}_{\EA},\sup_{t\in [0,T]}\norm{G(t)}_{\LalphaPlusEinsDualNorm}\big\}  \le R \Big\}.
	\end{align*}
	Lemma~\ref{Lemma:LSC} then implies that $Z_R$ is a closed set in $Z$ and 
	\begin{align*}
		\Phi_k(u,G)=\lim_{R\to \infty} \mathbf{1}_{Z_R}(u,G) \Real \duality{\mathcal{M}_k(u,G)(t)}{\psi}.
	\end{align*}
	It is thus sufficient to show that the maps 
	\begin{align*}
		Z_R\ni (u,G)\mapsto \Real \duality{\mathcal{M}_k(u,G)(t)}{\psi}\in \R
	\end{align*}
	are continuous. In order to show this, we take $Z_R\ni (u_n,G_n)\to (u,G)\in Z_R$, $\psi\in \EA$ and consider 
		\begin{align}\label{martingaleMaps:One}
		&\vert\duality{\mathcal{M}_k(u_n,G_n)(t)}{\psi}-\duality{\mathcal{M}_k(u,G)(t)}{\psi}\vert
		\nonumber\\&\le  \big\vert \duality{u(t)-u_n(t)}{\psi}\big\vert+ \int_0^t \big\vert\duality{\im A\big[u(r)-u_n(r)\big]}{\psi}\big\vert\df r\nonumber\\&\qquad + \int_0^t\big\vert\duality{\im P_{n_k} \big[G(r)-G_n(r)\big]}{\psi}\big\vert\df r
		+\int_0^t\big\vert\duality{\mu_{n_k}[u_n(r)-u(r)\big]}{\psi}\big\vert  \df r.
		\end{align}
		Lemma~\ref{LemmaConvergences} yields that the integrands on the RHS of \eqref{martingaleMaps:One} convergence to $0$ for each $r\in [0,T].$ The definition of $Z_R$ and Lebesgue's dominated convergence theorem hence imply 
		\begin{align*}
			\vert\duality{\mathcal{M}_k(u_n,G_n)(t)}{\psi}-\duality{\mathcal{M}_k(u,G)(t)}{\psi}\vert\to 0
		\end{align*}
		as $n\to \infty$, which finally proves the assertion. \\
		\emph{ad $\Psi_k$:} For fixed $\varphi\in H$, we consider the linear operator $\mathfrak{T}\colon Z\to L^2([s,t]\times \N)$ given by 
			\begin{align*}
			\mathfrak{T}(u,G)= \big(\skpHReal{\im S_{n_k} B_m S_{n_k} u}{\varphi}\big)_{m\in\N}
			\end{align*}
			and take $Z_R\ni (u_n,G_n)\to (u,G)\in Z_R$. 
Since the operators $S_{n_k} B_m S_{n_k}$ are bounded and selfadjoint and $u_n(r)\rightharpoonup u(r)$ in $H$ for all $r\in [s,t]$, we infer that for fixed $m\in\N$, $r\in [s,t]$ it holds that
\begin{align*}
	\skpHReal{\im S_{n_k} B_m S_{n_k} u_n(r)}{\varphi}\to \skpHReal{\im S_{n_k} B_m S_{n_k} u(r)}{\varphi}
\end{align*}
as $n\to \infty$. Due to the estimate 
\begin{align*}
	\vert \skpHReal{\im S_{n_k} B_m S_{n_k} u_n(r)}{\varphi}\vert \le \norm{B_m}_{\LinearOperators{H}} \sup_{r\in[s,t]}\norm{u_n(r)}_H \norm{\varphi}_H\le R \norm{B_m}_{\LinearOperators{H}} \norm{\varphi}_H\in L^2([s,t]\times \N),
\end{align*}
Lebesgue's dominated convergence theorem yields 
\begin{align*}
\mathfrak{T}(u_n,G_n)=\Big(\skpHReal{\im S_{n_k} B_m S_{n_k} u_n}{\varphi}\Big)_{m\in\N}\to \Big(\skpHReal{\im S_{n_k} B_m S_{n_k} u}{\varphi}\Big)_{m\in\N}=\mathfrak{T}(u,G)
\end{align*}
in $L^2([s,t]\times \N)$. By the continuity of the inner product on $L^2([s,t]\times \N)$, we get
\begin{align*}
	\Psi_k(u_n,G_n)\to \Psi_k(u,G),\qquad n\to \infty.
\end{align*}
As in the case of $\Phi_k$ this implies that $\Psi_k\colon Z\to \R$ is measurable.
\end{proof}

				\begin{Lemma}
					Let $N_k: \tilde{\Omega} \times [0,T] \rightarrow H_{n_k}$ satisfy
					$N_k(\tilde{\omega},t)=\mathcal{M}_k((v_k,\tilde{F}_k)(\tilde{\omega},t))$
					for $k\in \N$ and $t\in[0,T]$. Then it holds that 
	\begin{align*}
	\Etilde \left[ \Real \duality{N_k(t)-N_k(s)}{\psi} h(v_k|_{[0,s]})\right]=0
	\end{align*}
	and
	\begin{align*}
	\Etilde \Bigg[& \Bigg(\Real \duality{N_k(t)}{\psi}\Real \duality{N_k(t)}{\varphi}-\Real \duality{N_k(s)}{\psi}\Real \duality{N_k(s)}{\varphi}\\
	&\hspace{1 cm}-\sumM \int_s^t  \Real \duality{\im S_{n_k} B_m S_{n_k} v_k(r)}{\psi} \Real \duality{\im S_{n_k} B_m S_{n_k} v_k(r)}{\varphi} \df r\Bigg) h(v_k|_{[0,s]})\Bigg]=0
	\end{align*}
	for all $0\le s\le t\le T$, $\psi, \varphi \in \EA$ and bounded, continuous functions $h$ on $C([0,T],U^*).$ 
					\end{Lemma}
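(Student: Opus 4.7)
The strategy is to first establish both identities for the original Galerkin processes $(u_{n_k}, F(u_{n_k}))$ on $(\Omega,\F,\Prob)$ via the fact that $\mathcal{M}_k(u_{n_k}, F(u_{n_k}))$ is (up to a constant factor $\im$) a stochastic integral, and then transfer them to $(\tilde\Omega,\tilde\F,\tilde\Prob)$ via the equality in law $\tilde\Prob^{(v_k,\tilde F_k)} = \Prob^{(u_{n_k}, F(u_{n_k}))}$ from Proposition~\ref{PropAlmostSureConvergence}.

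First I would observe that substituting $(u,G) = (u_{n_k}, F(u_{n_k}))$ into the definition of $\mathcal{M}_k$ and comparing with the It\^o form of \eqref{galerkinEquation}, one obtains
\begin{align*}
\mathcal{M}_k(u_{n_k}, F(u_{n_k}))(t) = \im \int_0^t S_{n_k} B(S_{n_k} u_{n_k}(s))\,\df W(s),\qquad t\in[0,T],
\end{align*}
which is a continuous $H_{n_k}$-valued $\Filtration$-martingale thanks to Assumption~\ref{stochasticAssumptions} and the uniform estimates from Propositions~\ref{EstimatesGalerkinSolutionDefocusing} and \ref{EstimatesGalerkinSolutionFocusing}. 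Hence, for every $\psi\in\EA$, the scalar process $t\mapsto \Real \duality{\mathcal{M}_k(u_{n_k}, F(u_{n_k}))(t)}{\psi}$ is a continuous real $\Filtration$-martingale, while for $\psi,\varphi\in\EA$ the It\^o isometry identifies its quadratic covariation with the second identity's integral against $u_{n_k}$.

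Next I would exploit the $\F_s$-measurability of $h(u_{n_k}|_{[0,s]})$ (which follows from the continuity of the restriction map on $C([0,T],U^*)$ and the fact that $u_{n_k}$ is $\Filtration$-adapted with continuous paths in $U^*$). The martingale property gives
\begin{align*}
\E\Big[ \Real \duality{\mathcal{M}_k(u_{n_k}, F(u_{n_k}))(t) - \mathcal{M}_k(u_{n_k}, F(u_{n_k}))(s)}{\psi}\, h(u_{n_k}|_{[0,s]})\Big] = 0,
\end{align*}
and the corresponding product-minus-covariation expression also has zero expectation conditional on $\F_s$, hence the analogous identity multiplied by $h(u_{n_k}|_{[0,s]})$ integrates to zero.

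The final step is the transfer. The functionals
\begin{align*}
\Phi_k(u,G) &:= \Real \duality{\mathcal{M}_k(u,G)(t) - \mathcal{M}_k(u,G)(s)}{\psi}\, h(u|_{[0,s]}), \\
\Psi_k(u,G) &:= \Big( \Real \duality{\mathcal{M}_k(u,G)(t)}{\psi} \Real \duality{\mathcal{M}_k(u,G)(t)}{\varphi} - (\text{same at $s$}) - \sum_{m} \int_s^t (\cdots)\,\df r\Big) h(u|_{[0,s]})
\end{align*}
are Borel measurable on $(Z,\mathcal{Z})$ by Lemma~\ref{continuity:MartingaleMaps} together with the continuity of $h$ and of the restriction map $C([0,T],U^*)\to C([0,s],U^*)$. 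The uniform $L^q$-bounds of Proposition~\ref{NewAprioriBounds}~a) guarantee that $\Phi_k$ and $\Psi_k$ are $\tilde\Prob$-integrable evaluated at $(v_k,\tilde F_k)$. Therefore, since $\tilde\Prob^{(v_k,\tilde F_k)} = \Prob^{(u_{n_k}, F(u_{n_k}))}$,
\begin{align*}
\tilde\E[\Phi_k(v_k,\tilde F_k)] = \E[\Phi_k(u_{n_k}, F(u_{n_k}))] = 0,
\end{align*}
and analogously for $\Psi_k$, which are precisely the asserted identities after noting that $N_k = \mathcal{M}_k(v_k,\tilde F_k)$. The main delicate point is the measurability and integrability of the functionals $\Phi_k,\Psi_k$ on $Z$, which is exactly what Lemma~\ref{continuity:MartingaleMaps} provides.
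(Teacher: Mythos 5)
Your proposal is correct and takes essentially the same approach as the paper: identify $\mathcal{M}_k(u_{n_k}, F(u_{n_k}))$ as the stochastic integral $\im \int_0^t S_{n_k} B(S_{n_k} u_{n_k}(s))\,\df W(s)$, deduce that it is a continuous square-integrable martingale with the stated quadratic variation, derive the two expectation identities on the original probability space (the paper invokes Lemma A.16 of \cite{dissertationFH} where you give the conditioning argument directly, a purely cosmetic difference), and transfer to $(\tilde{\Omega},\tilde{\Prob})$ via equality in law $\tilde{\Prob}^{(v_k,\tilde{F}_k)}=\Prob^{(u_{n_k},F(u_{n_k}))}$ together with the measurability of the relevant functionals on $(Z,\mathcal{Z})$ established in Lemma~\ref{continuity:MartingaleMaps}.
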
	
\begin{proof}
	Let  $M_k: \Omega \times [0,T] \rightarrow H_{n_k}$, $k\in\N$, be the  stochastic processes which satisfy
	\begin{align*}
		M_k(\omega,t)=\mathcal{M}_k((u_{n_k},F(u_{n_k}))(\omega,t)),\qquad \omega\in\Omega,\, t\in [0,T].
	\end{align*}
	Recall from Proposition~\ref{MassEstimateGalerkinSolution} and \eqref{galerkinEquation} that we have 
		\begin{align*}
		M_k(t)&=-u_{n_k}(t)+ S_{n_k} u_0+ \int_0^t \big[-\im A u_{n_k}(s)-\im P_{n_k} F\left( u_{n_k}(s)\right)+\mu_{n_k}(u_{n_k}(s))\big] \df s
		\\&= \im \int_0^t S_{n_k} B (S_{n_k} u_{n_k}(s)) \df W(s).
		\end{align*}
	Hence, we obtain that $M_k$ is a continuous square integrable martingale w.r.t the filtration $\F_{k,t}:=\sigma \left(u_{n_k}(s): s\le t\right)$. The quadratic variation is given by
	\begin{align*}
	\quadVar{M_k}_t\psi= \sumM \int_0^t \big[\im S_{n_k} B_m S_{n_k} u_{n_k}(s)\big] \skpHReal{\im S_{n_k} B_m S_{n_k} u_{n_k}(s)}{\psi} \df s,\qquad \psi \in {H}.
	\end{align*}
	From Lemma A.16 in Appendix A in \cite{dissertationFH}, we infer
				\begin{align*}
				\E \left[ \skpHReal{M_k(t)-M_k(s)}{\psi} h(u_{n_k}|_{[0,s]})\right]=0
				\end{align*}
				and
				\begin{align*}
				\E \Bigg[& \Bigg(\skpHReal{M_k(t)}{\psi}\skpHReal{M_k(t)}{\varphi}-\skpHReal{M_k(s)}{\psi}\skpHReal{M_k(s)}{\varphi}\\
				&\hspace{1 cm}-\sumM \int_s^t  \skpHReal{\im S_{n_k} B_m S_{n_k} u_{n_k}(r)}{\psi} \skpHReal{\im S_{n_k} B_m S_{n_k} u_{n_k}(r)}{\varphi} \df r\Bigg) h(u_{n_k}|_{[0,s]})\Bigg]=0
				\end{align*}
				for all $0\le s\le t\le T$, $\psi, \varphi \in {H}$ and bounded, continuous functions $h$ on $C([0,T],{H_{n_k}}).$
%
	By the identity $\tilde{\Prob}^{(v_k,\tilde{F}_k)}=\Prob^{(u_{n_k},F(u_{n_k}))}$ and Lemma~\ref{continuity:MartingaleMaps}, we deduce
	\begin{align}\label{approxMartingaleExpecApprox}
	\Etilde \left[ \Real \duality{N_k(t)-N_k(s)}{\psi} h(v_k|_{[0,s]})\right]=0
	\end{align}
	and
	\begin{align}\label{approxMartingaleVariationApprox}
	\Etilde \Bigg[& \Bigg(\Real \duality{N_k(t)}{\psi}\Real \duality{N_k(t)}{\varphi}-\Real \duality{N_k(s)}{\psi}\Real \duality{N_k(s)}{\varphi}\nonumber\\
	&\hspace{1 cm}-\sumM \int_s^t  \Real \duality{\im S_{n_k} B_m S_{n_k} v_k(r)}{\psi} \Real \duality{\im S_{n_k} B_m S_{n_k} v_k(r)}{\varphi} \df r\Bigg) h(v_k|_{[0,s]})\Bigg]=0
	\end{align}
	for all $0\le s\le t\le T$, $\psi, \varphi \in\EA$ and bounded, continuous functions $h$ on $C([0,T],U^*).$ 
\end{proof}

	Let $\iota: U \hookrightarrow {H}$ be the usual embedding, $\iota^*: {H} \rightarrow U$ its Hilbert-space-adjoint,
	i.e. $\skpH{\iota u}{v}=\skp{u}{\iota^* v}_U$ for $u\in U$ and $v\in{H}.$
	Further, we set $L:=\left(\iota^*\right)': U^* \rightarrow {H}$  as the dual operator of $\iota^*$ with respect to the Gelfand triple $U\hookrightarrow H\eqsim H^*\hookrightarrow U^*.$

		\begin{Lemma}\label{martingaleRepresentation}
			For  $(u,G)\in Z$, $t\in [0,T]$, we denote 
			\begin{align}\label{TildeM}
			\big(\mathcal{M}(u,G)\big)(t):=-u(t)+ u_0&+ \int_0^t \left[-\im A u(s)-\im  G(s)+\mu(u(s))\right] \df s
			\end{align}
						and	$N(\tilde{\omega},t)=\mathcal{M}((v,\tilde{F})(\tilde{\omega},t))$ for $\tilde{\omega}\in \tilde{\Omega}$, $t\in [0,T]$.
							Then, the process $LN: \tilde{\Omega} \times [0,T] \rightarrow H$  is a continuous square integrable martingale in $H$ 
							w.r.t the filtration $\tilde{\F}_{t}:=\sigma \left(v(s): s\le t\right).$ The quadratic variation of $LN$ is given by
							\begin{align*}
							\quadVar{L N}_t\zeta= \sumM \int_0^t \im L B_m v(s)\, \skpHReal{\im L B_m v(s)}{\zeta} \df s,\qquad \zeta \in {H}.
							\end{align*}
		\end{Lemma}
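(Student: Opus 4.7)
The plan is to identify $LN$ as the desired martingale by passing to the limit $k\to\infty$ in the identities \eqref{approxMartingaleExpecApprox} and \eqref{approxMartingaleVariationApprox} for $N_k$ and then invoking a standard martingale representation (e.g.\ Lemma~A.16 in \cite{dissertationFH}). The main inputs are the almost sure convergence $(v_k,\tilde F_k)\to(v,\tilde F)$ in $Z$ from Proposition~\ref{PropAlmostSureConvergence}, the pointwise weak convergences of Lemma~\ref{LemmaConvergences}, the uniform moment bounds from Proposition~\ref{NewAprioriBounds}, and Vitali's convergence theorem. By specialising the test function $\psi\in\EA$ to $\psi=\iota^*\zeta$ for $\zeta\in H$, the $\EAdual$--$\EA$ duality $\duality{N_k(t)}{\iota^*\zeta}$ translates into the $H$--scalar product $\skpH{LN_k(t)}{\zeta}$, so the target identities for $LN$ against $\zeta\in H$ follow from the approximated ones once we can pass to the limit under $\Etilde$.

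For the pointwise limit, fix $\tilde\omega$ outside the null set on which convergence in $Z$ fails. Then Lemma~\ref{LemmaConvergences}, $S_{n_k}u_0\to u_0$ in $\EA$ from Lemma~\ref{convergenceProperty}, and Lebesgue dominated convergence applied to the Bochner integrals---with majorants coming from the uniform $\EA$-bound on $v_k$, the uniform $\LalphaPlusEinsDual$-bound on $\tilde F_k$, and the noise assumption---yield $\Real\duality{N_k(t)}{\psi}\to\Real\duality{N(t)}{\psi}$ for every $t\in[0,T]$ and $\psi\in\EA$. For the bracket term I would decompose
\begin{align*}
S_{n_k}B_mS_{n_k}v_k(r)-B_mv(r)=S_{n_k}B_mS_{n_k}\bigl(v_k(r)-v(r)\bigr)+\bigl(S_{n_k}B_mS_{n_k}-B_m\bigr)v(r)
\end{align*}
and combine $\norm{S_{n_k}y-y}_H\to 0$ from Lemma~\ref{convergenceProperty} with $v_k(r)\to v(r)$ in $U^*$, hence weakly in $H$ (which suffices after testing against $\psi,\varphi\in\EA\hookrightarrow H$), to obtain pointwise convergence of the integrand for each fixed $m$ and $r$. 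A double application of dominated convergence in $r$ and in the sum over $m$, with majorant proportional to $\norm{B_m}_{\LinearOperators{H}}^2\norm{u_0}_H^2\norm{\psi}_H\norm{\varphi}_H$ thanks to the $L^2$-conservation $\norm{v_k(r)}_H\le\norm{u_0}_H$ from Proposition~\ref{MassEstimateGalerkinSolution} and the summability in Assumption~\ref{stochasticAssumptions}, then yields almost sure convergence of the bracket integral.

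To promote these almost sure limits to convergence in $\Etilde$, I would apply Vitali. The triangle inequality together with Proposition~\ref{NewAprioriBounds} and Assumption~\ref{stochasticAssumptions} gives $\sup_k\Etilde\big[\sup_{t\in[0,T]}\norm{N_k(t)}_\EAdual^p\big]<\infty$ for every $p\in[1,\infty)$, which supplies the uniform integrability required for both \eqref{approxMartingaleExpecApprox} and the bilinear expression in \eqref{approxMartingaleVariationApprox}; boundedness and continuity of $h$ on $C([0,T],U^*)$ together with $v_k\to v$ in that space takes care of the test functional. The resulting limiting identities identify $LN$ as a continuous square integrable $\tilde\Filtration$-martingale in $H$ with the stated quadratic variation; continuity of $t\mapsto LN(t)$ in $H$ then follows from continuity of $v$ in $X_\gamma\hookrightarrow U^*$, of $\tilde F$ in $\LalphaPlusEinsDual$, Bochner integrability of the drift, and boundedness of $L$.

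The main obstacle I anticipate is the uniform-in-$m$ control needed to interchange the limit with the series $\sumM$ in the bracket identity: the key is that the operator bound $\norm{S_{n_k}B_mS_{n_k}}_{\LinearOperators{H}}\le\norm{B_m}_{\LinearOperators{H}}$ is independent of $k$, so that summability of $(\norm{B_m}_{\LinearOperators{H}}^2)_m$ from Assumption~\ref{stochasticAssumptions} combined with $L^2$-conservation furnishes a summable majorant and the Weierstrass $M$-test style argument for passing the limit through the series goes through cleanly.
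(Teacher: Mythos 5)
Your proposal is correct and follows essentially the same route as the paper: pass to the limit in the approximate martingale identities \eqref{approxMartingaleExpecApprox} and \eqref{approxMartingaleVariationApprox} using the almost sure convergence in $Z$, Lemma~\ref{LemmaConvergences}, the uniform moment bounds of Proposition~\ref{NewAprioriBounds} with Vitali, treat the bracket term by exploiting selfadjointness of $S_{n_k}B_mS_{n_k}$ together with $\norm{S_{n_k}y-y}_H\to 0$ and a summable majorant in $m$, specialise test functions via $\iota^*$, and conclude with Lemma~A.16 of \cite{dissertationFH}. The only cosmetic difference is that you split the difference $S_{n_k}B_mS_{n_k}v_k-B_mv$ on the vector side whereas the paper moves the operators onto the test function first; both versions close with the same ingredients.
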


\begin{proof}
	Throughout this proof let $s\in [0,T]$ and let $h$ be a bounded, continuous function on $C([0,s],U^*)$.
	
	\emph{Step 1.} From Proposition~\ref{NewAprioriBounds}, we get $v\in L^\infty(0,T;\EA)$,  $\tilde{F}\in L^\infty(0,T;\LalphaPlusEinsDual)$, and $\mu(v)\in L^\infty(0,T;\EA)$ almost surely.
	The fact that $v\in Z$ almost surely further shows $v\in C([0,T],\EAdual)$ almost surely. The fact that $N=\mathcal{M}((v,\tilde{F}))$ and \eqref{TildeM} thus show that $N$ has continuous paths in $\EAdual$. The process $LN$ therefore has continuous paths in $H$.
	
	\emph{Step 2.} Recall from Proposition~\ref{PropAlmostSureConvergence} that we have $(v_k,\tilde{F}_k)\to (v,\tilde{F})$ almost surely in $Z$ as $k\to\infty$. In particular, this yields $v_k\to v$ almost surely in $C([0,T],U^*)$. From Lemma~\ref{LemmaConvergences}, we can thus deduce
	 that for all $\psi\in \EA$ it holds almost surely for all $r\in [0,T]$  that 
	\begin{align*}
		\duality{-\im A v_k(r)-\im  P_{n_k}\tilde{F}_k(r)+\mu_{n_k}(v_k(r))}{\psi} h(v_k|_{[0,s]}) \to \duality{-\im A v(r)-\im  \tilde{F}(r)+\mu(v_k(r))}{\psi} h(v|_{[0,s]})
	\end{align*}
	\begin{align*}
		 \text{and}\qquad\duality{v_k(t)}{\psi}h(v_k|_{[0,s]})\to \duality{v(t)}{\psi}h(v|_{[0,s]})
	\end{align*}
	as $k\to\infty$.
From Proposition~\ref{NewAprioriBounds}, we further obtain  
	\begin{align*}
&\Vert\duality{-\im A v_k-\im  P_{n_k}\tilde{F}_k+\mu_{n_k}(v_k)}{\psi} h(v_k|_{[0,s]})\Vert_{L^p(\tilde{\Omega}\times [0,T])}
		\\&\le \norm{\psi}_\EA \big[\sup\big\{\vert h(f)\vert\colon f\in C([0,s],U^*) \big\}\big]  \\&\quad\cdot\Big\Vert\norm{Av_k}_\EAdual+\norm{P_{n_k}\tilde{F}_k}_{\EAdual}+\norm{\mu_{n_k}(v_k)}_H\Big\Vert_{L^p(\tilde{\Omega}\times [0,T])}
		\\&\lesssim \norm{\psi}_\EA \big[\sup\big\{\vert h(f)\vert\colon f\in C([0,s],U^*) \big\}\big] 
		\\&\quad\cdot \sup_{k\in\N}\Big[\norm{v_k}_{L^p(\tilde{\Omega}\times [0,T],\EA)}+\norm{\tilde{F}_k}_{L^p(\tilde{\Omega}\times [0,T],\LalphaPlusEinsDualNorm)}\Big]
		<\infty
	\end{align*}
	for all $p\in [1,\infty)$, $\psi\in \EA$  as well as
	\begin{align*}
		\Vert\duality{v_k(t)}{\psi}h(v_k|_{[0,s]})\Vert_{L^p(\tilde{\Omega})}\le \norm{\psi}_\EA \big[\sup\big\{\vert h(f)\vert\colon f\in C([0,s],U^*) \big\}\big] \sup_{k\in\N}\norm{v_k(t)}_{L^p(\tilde{\Omega},\EAdual)}<\infty
	\end{align*}
		for all $p\in [1,\infty)$, $t\in [0,T]$, $\psi\in \EA$.
	Vitali's convergence theorem thus proves
		\begin{align*}
		\duality{-\im A v_k-\im  P_{n_k}\tilde{F}_k+\mu_{n_k}(v_k)}{\psi} h(v_k|_{[0,s]}) \to \duality{-\im A v-\im  \tilde{F}+\mu(v_k)}{\psi} h(v|_{[0,s]})
		\end{align*}
	in $L^p(\tilde{\Omega}\times [0,T])$	as $k\to\infty$ for all $p\in [1,\infty)$, $\psi\in \EA$ and
		\begin{align*}
	\duality{v_k(t)}{\psi}h(v_k|_{[0,s]})\to \duality{v(t)}{\psi}h(v|_{[0,s]})
		\end{align*}
	in $L^p(\tilde{\Omega})$  for all $p\in [1,\infty)$, $t\in [0,T]$, $\psi\in \EA$.
Hence, 
	\begin{align*}
		&\big\Vert\Real\duality{N(t)}{\psi} h(v|_{[0,s]})-\skpHReal{N_k(t)}{\psi} h(v_k|_{[0,s]})\big\Vert_{L^p(\tilde{\Omega})}
		\\&\le \Etilde \bigg(\bigg\vert\int_0^t \duality{-\im A v_k(r)-\im  P_{n_k}\tilde{F}_k(r)+\mu_{n_k}(v_k(r))}{\psi} h(v_k|_{[0,s]})
		\\&\hspace{5cm}-\duality{-\im A v(r)-\im  \tilde{F}(r)+\mu(v_k(r))}{\psi} h(v|_{[0,s]})  \df r\bigg\vert^p\bigg)^{\!1/p}
				\\&\quad + \big\Vert \duality{v(t)}{\psi}h(v|_{[0,s]})-\duality{v_k(t)}{\psi}h(v_k|_{[0,s]})\big\Vert_{L^p(\tilde{\Omega})}
		\\&\lesssim \Big\Vert\duality{-\im A v_k-\im  P_{n_k}\tilde{F}_k+\mu_{n_k}(v_k)}{\psi} h(v_k|_{[0,s]}) - \duality{-\im A v-\im  \tilde{F}+\mu(v)}{\psi} h(v|_{[0,s]})\Big\Vert_{L^p(\tilde{\Omega}\times [0,T])}
						\\&\quad + \big\Vert \duality{v(t)}{\psi}h(v|_{[0,s]})-\duality{v_k(t)}{\psi}h(v_k|_{[0,s]})\big\Vert_{L^p(\tilde{\Omega})}
		\\&\to 0,\qquad k\to\infty
	\end{align*}	
	for all $p\in [1,\infty)$, $t\in [0,T]$, $\psi\in \EA$. Therefore, we obtain
		\begin{align}\label{martingaleLimitOne}
		\Etilde \left[ \Real\duality{N(t)-N(s)}{\psi} h(v|_{[0,s]})\right]=\lim_{k\to\infty}\Etilde \left[ \Real \duality{N_k(t)-N_k(s)}{\psi} h(v_k|_{[0,s]})\right]=0
		\end{align}
		and
		\begin{align}\label{martingaleLimitTwo}
			\Etilde \Big[& \big(\Real\duality{N(t)}{\psi}\Real\duality{N(t)}{\varphi}-\Real\duality{N(s)}{\psi}\Real\duality{N(s)}{\varphi}\big)  h(v|_{[0,s]}) \Big]
			\nonumber\\&=\lim_{k\to\infty} \Etilde \Big[ \big(\Real \duality{N_k(t)}{\psi}\Real \duality{N_k(t)}{\varphi}-\Real\duality{N_k(s)}{\psi}\Real\duality{N_k(s)}{\varphi}\big)  h(v_k|_{[0,s]}) \Big]
		\end{align}
	for all $t\in [s,T]$, $\psi,\varphi\in \EA$.

\emph{Step 3.} Note that as in the proof of Lemma~\ref{LemmaConvergences}, one can deduce 	
\begin{align*}
	\norm{S_{n_k} B_m S_{n_k}\psi-B_m \psi}_H\to 0
\end{align*}
as $k\to\infty$ for all $\psi\in H$. Since we have $v_k(t)\rightharpoonup v(t)$ in $H$ almost surely for all $t\in [0,T]$  by Lemma~\ref{LemmaConvergences}, we get
\begin{align*}
	&\big\vert \Real\duality{S_{n_k} B_m S_{n_k} v_k(t)-B_m v(t)}{\psi}\big\vert=\big\vert \Real\duality{ v_k(t)}{S_{n_k} B_m S_{n_k}\psi}-\Real\duality{v(t)}{B_m \psi} \big\vert
	\\&\le \big\vert \Real\duality{ v_k(t)}{S_{n_k} B_m S_{n_k}\psi-B_m \psi}\big\vert+\big\vert\Real\duality{v_k(t)-v(t)}{B_m \psi} \big\vert
	\\&\le \sup_{k\in\N} \norm{v_k(t)}_H \norm{S_{n_k} B_m S_{n_k}\psi-B_m \psi}_H+\big\vert\Real\duality{v_k(t)-v(t)}{B_m \psi} \big\vert\to 0
\end{align*}
almost surely for all $t\in [0,T]$, $m\in\N$  as $k\to\infty$.
Using the fact that $h(v_k|_{[0,s]})\to h(v|_{[0,s]})$ almost surely, the bound \begin{align*}
	&\vert\Real\duality{S_{n_k} B_m S_{n_k} v_k(t)}{\psi}\vert^2h(v_k|_{[0,s]}) 
	\\&\le \norm{\psi}_H^2 \norm{B_m}_{\LinearOperators{H}}^2  \sup_{k\in\N} \norm{v_k(t)}^2_H \big[\sup\big\{\vert h(f)\vert\colon f\in C([0,s],U^*) \big\}\big] \in \ell^1(\N)
\end{align*}
almost surely for all $t\in [0,T]$, and Lebesgue's dominated convergence theorem, we infer 
\begin{align*}
	\Real\duality{S_{n_k} B_m S_{n_k} v_k(t)}{\psi}\,h(v_k|_{[0,s]}) \to\Real\duality{B_m v(t)}{\psi}\, h(v|_{[0,s]}) 
\end{align*}
in $\ell^2(\N)$ almost  surely for all $t\in [0,T]$ as $k\to\infty$.
The estimate
\begin{multline*}
	\Big\Vert\Big(\sumM\big\vert \Real\duality{S_{n_k} B_m S_{n_k} v_k}{\psi}\,h(v_k|_{[0,s]})\big\vert^2\Big)^{\!1/2}\Big\Vert_{L^p(\tilde{\Omega}\times [0,T])}
	\\\le \norm{\psi}_H \Big(\sumM\norm{B_m}_{\LinearOperators{H}}^2\Big)^{\!1/2} \sup_{k\in\N}\norm{v_k}_{L^p(\tilde{\Omega}\times [0,T],H)}\big[\sup\big\{\vert h(f)\vert\colon f\in C([0,s],U^*) \big\}\big]<\infty
\end{multline*}
and Vitali's convergence theorem thus ensure for all $p\in [1,\infty)$ that 
\begin{align*}
\Real\duality{S_{n_k} B_m S_{n_k} v_k}{\psi}\,h(v_k|_{[0,s]}) \to\Real\duality{B_m v}{\psi}\, h(v|_{[0,s]})  \qquad \text{in $L^p(\tilde{\Omega}\times [0,T],\ell^2(\N))$ as $k\to\infty$.}
\end{align*}
Consequently,
\begin{align}\label{martingaleLimitThree}
&\lim_{k\to\infty}	\Etilde \bigg[\sumM \int_s^t  \skpHReal{\im S_{n_k} B_m S_{n_k} v_k(r)}{\psi} \skpHReal{\im S_{n_k} B_m S_{n_k} v_k(r)}{\varphi} \df r\,  h(v_k|_{[0,s]})\bigg]
\nonumber\\&=\lim_{k\to\infty} \Big(\skpHReal{\im S_{n_k} B_m S_{n_k} v_k(r)}{\psi}h(v_k|_{[0,s]}),\skpHReal{\im S_{n_k} B_m S_{n_k} v_k(r)}{\varphi}\Big)_{L^2(\tilde{\Omega}\times [s,t]\times \N)}
\nonumber\\&=\Big(\Real\duality{\im  B_m  v(r)}{\psi}\,h(v|_{[0,s]}),\Real\duality{\im  B_m  v(r)}{\varphi}\Big)_{L^2(\tilde{\Omega}\times [s,t]\times \N)}
\nonumber\\&=\Etilde \bigg[\sumM \int_s^t  \Real\duality{\im  B_m  v(r)}{\psi} \Real\duality{\im  B_m  v(r)}{\varphi} \df r\,  h(v|_{[0,s]})\bigg]
\end{align}
for all $t\in [s,T]$, $\psi,\varphi\in \EA$.
As a combination of \eqref{martingaleLimitOne}, \eqref{martingaleLimitTwo}, \eqref{martingaleLimitThree}, and the embedding $U\hookrightarrow \EA$ we deduce 
	\begin{align}\label{approxMartingaleExpec}
	\Etilde \left[ \Real\duality{N(t)-N(s)}{\psi} h(v|_{[0,s]})\right]=0
	\end{align}
	and
	\begin{align}\label{approxMartingaleVariation}
	\Etilde \Bigg[& \Bigg(\Real\duality{N(t)}{\psi}\Real\duality{N(t)}{\varphi}-\Real\duality{N(s)}{\psi}\Real\duality{N(s)}{\varphi}\nonumber\\
	&\hspace{1 cm}-\sumM \int_s^t  \Real\duality{\im  B_m  v(r)}{\psi} \Real\duality{\im  B_m  v(r)}{\varphi} \df r\Bigg) h(v|_{[0,s]})\Bigg]=0
	\end{align}
	for all $0\le s\le t\le T$, $\psi, \varphi \in U$ and bounded, continuous functions $h$ on $C([0,T],U^*).$ 
			Now, let $\eta,\zeta \in{H}.$ Then $\iota^* \eta,\iota^* \zeta \in U$ and for all $z\in U^*,$ we have $\skpHReal{L z}{\eta}=\dualityReal{z}{\iota^* \eta}_{U^*,U}.$ By the previous considerations, $L N$ is a continuous, square integrable process in ${H}$ and the identities $\eqref{approxMartingaleExpec}$ and $\eqref{approxMartingaleVariation}$ imply
			\begin{align*}
			\Etilde \left[\skpHReal{L N(t)-L N(s)}{\eta} h(v|_{[0,s]})\right]=0
			\end{align*}
			and
			\begin{align*}
			\Etilde \Bigg[ \Bigg(\skpHReal{L N(t)}{\eta}&\skpHReal{L N(t)}{\zeta}-\skpHReal{L N(s)}{\eta}\skpHReal{L N(s)}{\zeta}\\
			&+\sumM \int_s^t  \skpHReal{\im L  B_m v(r)}{\eta} \skpHReal{\im L B_m v(r)}{\zeta} \df r\Bigg) h(v|_{[0,s]})\Bigg]=0.
			\end{align*}
			Lemma A.16 in \cite{dissertationFH} finally ensures that $L N$ is a continuous, square integrable martingale in ${H}$ with respect to the filtration $\tilde{\F}_{t}:=\sigma \left(v(s): s\le t\right)$ which has the quadratic variation						
			\begin{align*}
			\quadVar{L N}_t\zeta= \sumM \int_0^t \im L B_m v(s)\, \skpHReal{\im L B_m v(s)}{\zeta} \df s,\qquad \zeta \in {H}.
			\end{align*}
\end{proof}

 Combining Lemma~\ref{martingaleRepresentation} and the Martingale Representation Theorem from \cite{daPrato}, Theorem 8.2, we obtain the following result. 

\begin{Prop}\label{Prop:pseudoEquation}
	There exists a stochastic basis $( \tilde{\tilde{\Omega}},  \tilde{\tilde{\F}},  \tilde{\tilde{\Prob}}, (\tilde{\tilde{\F}}_t)_t)$ and a $Y$-cylindrical Wiener process $W'$ defined on 
	$\left(\Omega',\F',\Prob'\right)=\left(\tilde{\Omega} \times \tilde{\tilde{\Omega}}, \tilde{\F}\otimes \tilde{\tilde{\F}},  \tilde{\Prob}\otimes\tilde{\tilde{\Prob}}\right)$, adapted to 
	$\F'_t=\tilde{\F}_t\otimes \tilde{\tilde{\F}}_t$, $t\in [0,T]$ which $\Prob'$-almost surely for all $t\in [0,T]$ satisfies 
	\begin{align}\label{equationWithPseudoNonlin}
	v(t)= u_0+ \int_0^t \left[-\im A v(s)-\im  \tilde{F}(s)+\mu(v(s))\right] \df s-\im \int_0^t B(v(s))\df W'(s), 
	\end{align}
	where, with a slight abuse of notation,  we write
	\begin{align*}
		v(t,\omega')\colon=v(t,\tilde{\omega})\qquad \text{and}\qquad \tilde{F}(t,\omega')\colon=\tilde{F}(t,\tilde{\omega})
	\end{align*}
for $t\in [0,T]$,  $\omega'=(\tilde{\omega},\tilde{\tilde{\omega}})\in \Omega'$.
\end{Prop}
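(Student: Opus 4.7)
The plan is to apply the martingale representation theorem of \cite{daPrato}, Theorem 8.2, to the $H$-valued continuous square-integrable martingale $LN$ furnished by Lemma~\ref{martingaleRepresentation}, and then to invert the operator $L$ in order to recover a stochastic integral equation for $v$ itself.

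As a first step I would rewrite the quadratic variation of $LN$ as that of a natural It\^o integral. Setting $\Psi(s) f_m := -\im L B_m v(s)$, the summability in Assumption~\ref{stochasticAssumptions} together with the $\EA$-bound for $v$ from Proposition~\ref{NewAprioriBounds} ensure that $\Psi$ is progressively measurable with respect to $\tilde{\F}$ and that $\Psi\in L^2(\tilde\Omega\times [0,T],\HS(Y,H))$; moreover the formula for $\quadVar{LN}_t$ given in Lemma~\ref{martingaleRepresentation} is exactly $\int_0^t \Psi(s)\Psi(s)^*\,\df s$ tested against $\zeta\in H$. Invoking the representation theorem then yields an auxiliary stochastic basis $(\tilde{\tilde\Omega},\tilde{\tilde\F},\tilde{\tilde\Prob},(\tilde{\tilde\F}_t)_t)$ and, on the product space $(\Omega',\F',\Prob')$ described in the statement, a $Y$-cylindrical Wiener process $W'$ adapted to $\F'$ such that
\begin{align*}
LN(t)=\int_0^t \Psi(s)\,\df W'(s)=-\im L\int_0^t B(v(s))\,\df W'(s),\qquad t\in[0,T],
\end{align*}
almost surely; in the second equality I use that a bounded linear operator commutes with stochastic integration.

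Finally I would cancel $L$. Since $\iota\colon U\hookrightarrow H$ has dense range, $\iota^*\colon H\to U$ is injective, so $L=(\iota^*)'\colon U^*\to H$ is injective as well. Both $N(t)\in\EAdual\hookrightarrow U^*$ and the stochastic integral $-\im\int_0^t B(v(s))\,\df W'(s)\in H\hookrightarrow U^*$ share the same $L$-image in $H$, hence they already coincide as elements of $U^*$; because the right-hand side lies in $H\subset\EAdual$, the equality actually takes place in $\EAdual$. Substituting the definition $N(t)=\big(\mathcal{M}((v,\tilde F))\big)(t)$ from \eqref{TildeM} and rearranging then produces the desired identity \eqref{equationWithPseudoNonlin}. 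The only non-routine point of the argument is precisely this inversion of $L$ together with the verification that the resulting equation is a well-defined identity in $\EAdual$; everything else is a direct application of the representation theorem, along the same lines as in \cite{BrzezniakMotyl,dissertationFH}.
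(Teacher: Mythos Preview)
Your proposal is correct and follows precisely the approach indicated by the paper, which merely invokes Lemma~\ref{martingaleRepresentation} together with the representation theorem from \cite{daPrato}, Theorem~8.2, and refers to \cite{ExistencePaper} for the details. One small correction to your justification: the injectivity of $L=(\iota^*)'$ follows from $\iota^*$ having \emph{dense range} (equivalently, from $\iota$ being injective), not from $\iota^*$ being injective; both hypotheses hold since $U\hookrightarrow H$ is a dense embedding, so your conclusion is unaffected.
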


We skip the proof which is similar to the proof of Theorem 1.1 on page 49  in \cite{ExistencePaper}.
We would like to point out that at first glance, the equation \eqref{equationWithPseudoNonlin} should be understood in the auxiliary space $U^*$. 
By $(v,\tilde{F})\in Z$, however, each term is sufficiently regular that the equation holds in the natural space $\EAdual$. Of course, Proposition~\ref{Prop:pseudoEquation} is a crucial step towards our goal of proving the existence of a martingale solution to \eqref{ProblemStratonovich}. It remains to show $\tilde{F}(t)=F(v(t))$ almost surely for all $t\in [0,T]$. Before we tackle this problem, let us use \eqref{equationWithPseudoNonlin} to verify that the mass of $v$ is conserved. The following Lemmata are needed to be prepared for this proof.

\begin{Lemma}\label{RegularityOfSomeFunctionals}
	\begin{itemize}
		\item[a)] For all $n\in\N$, $t\in[0,T]$, the map $\Psi_{n,t}\colon Z\to \R$ given by 
		\begin{align}
		\Psi_{n,t}(u,G)=\Real \duality{\mathbf{1}_{M_n}G(t)}{\im u(t)}_{L^{(\alpha+1)/\alpha},L^{\alpha+1}}
		\end{align}	
		is continuous.
		\item[b)] For all $n\in\N$, $t\in[0,T]$, $g\in L^{\alpha+1}(M)$, the map $\Phi_{n,t,g}\colon Z\to \R$ given by 
		\begin{align*}
		\Phi_{n,t,g}(u,G)=\duality{\mathbf{1}_{M_n}(G(t)-F(u(t)))}{g}_{L^{(\alpha+1)/\alpha},L^{\alpha+1}}
		\end{align*}	
		is continuous.		 
		\item[c)] For all $t\in [0,T]$, the map $\Phi_t\colon Z\to \R$ given by 
		\begin{align*}
		\Phi_t(u,G)=\norm{G(t)-F(u(t))}_{L^{(\alpha+1)/\alpha}}
		\end{align*}
		is Borel-measurable.
	\end{itemize}
\end{Lemma}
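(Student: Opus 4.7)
The plan is to exploit the structure of the topology $\mathcal{Z}$ on $Z$. Convergence $(u_k,G_k)\to(u,G)$ in $Z$ immediately yields, for each fixed $t\in[0,T]$, the weak convergences $u_k(t)\rightharpoonup u(t)$ in $\EA$ and $G_k(t)\rightharpoonup G(t)$ in $\LalphaPlusEinsDual$, and via Assumption~\ref{spaceAssumptions}~(ii) the upgrade to strong convergence $u_k(t)\to u(t)$ in $\LalphaPlusEinsKurz(M_n)$ for every $n\in\N$. This weak-against-strong combination drives parts (a) and (b).

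For (a), rewrite $\Psi_{n,t}(u,G)=\dualityReal{G(t)}{\im\,\mathbf{1}_{M_n}u(t)}_{\LalphaPlusEinsDualNorm,\NormLalphaPlusEins}$ and combine weak convergence of $G_k(t)$ in $\LalphaPlusEinsDual$ against $\im\,\mathbf{1}_{M_n}u(t)\in\LalphaPlusEins$ with strong convergence of $\mathbf{1}_{M_n}u_k(t)$ in $\LalphaPlusEins$ tested against the uniformly bounded functionals $G_k(t)$, using the standard weak-times-strong passage to the limit in a bilinear pairing together with the uniform bound on $\|G_k(t)\|_{\LalphaPlusEinsDualNorm}$ coming from Lemma~\ref{Lemma:LSC}. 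For (b), split
\[
\Phi_{n,t,g}(u_k,G_k)=\duality{G_k(t)}{\mathbf{1}_{M_n}g}_{\LalphaPlusEinsDualNorm,\NormLalphaPlusEins}-\duality{F(u_k(t))}{\mathbf{1}_{M_n}g}_{\LalphaPlusEinsDualNorm,\NormLalphaPlusEins}.
\]
The first summand converges by weak convergence of $G_k(t)$ against the fixed element $\mathbf{1}_{M_n}g\in\LalphaPlusEins$. For the second, use the local Lipschitz bound following Assumption~\ref{nonlinearAssumptions}, applied on $M_n$ via the Nemytskii (pointwise) structure of $F$—so that $F(u_k(t))|_{M_n}$ depends only on $u_k(t)|_{M_n}$—together with the uniform $\EA$-bound of $(u_k(t))$ and the strong convergence of $u_k(t)$ in $\LalphaPlusEinsKurz(M_n)$ provided by Assumption~\ref{spaceAssumptions}~(ii).

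For (c), combine duality with monotone convergence. By separability of $\LalphaPlusEinsKurz(M_n)$, choose for each $n\in\N$ a countable dense subset $(g_j^{(n)})_{j\in\N}$ of the unit ball of $\LalphaPlusEinsKurz(M_n)$, so that
\[
(u,G)\mapsto \bigl\|\mathbf{1}_{M_n}\bigl(G(t)-F(u(t))\bigr)\bigr\|_{\LalphaPlusEinsDualNorm}=\sup_{j\in\N}\bigl|\Phi_{n,t,g_j^{(n)}}(u,G)\bigr|
\]
is lower semicontinuous on $Z$ by part (b), hence Borel. Finally, the fact that $M_n\uparrow M$ combined with monotone convergence of the integrals $\int_{M_n}|G(t)-F(u(t))|^{(\alpha+1)/\alpha}\,\df\mu$ yields
\[
\Phi_t(u,G)=\lim_{n\to\infty}\bigl\|\mathbf{1}_{M_n}\bigl(G(t)-F(u(t))\bigr)\bigr\|_{\LalphaPlusEinsDualNorm},
\]
a monotone pointwise limit of Borel functions, hence Borel measurable.

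The main obstacle is the treatment of the nonlinear term in (b): the Lipschitz estimate following Assumption~\ref{nonlinearAssumptions} is a global statement on $\LalphaPlusEins(M)$, whereas Assumption~\ref{spaceAssumptions}~(ii) only yields strong convergence on the truncations $M_n$. Bridging this gap requires invoking the local (Nemytskii) nature of $F$, which is natural for the power-type nonlinearities covered by Theorem~\ref{mainTheoremIntro} but must be used explicitly here in order to restrict $F$ consistently to $M_n$.
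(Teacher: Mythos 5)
Your proposal is correct and follows essentially the same route as the paper: the weak-against-strong pairing with the uniform bound on $\Vert G_k(t)\Vert_{L^{(\alpha+1)/\alpha}}$ for (a), the split into the $G$-term (weak convergence against a fixed test function) and the $F$-term (Lipschitz estimate plus strong convergence on $M_n$) for (b), and a countable norming family followed by a limit in $n$ for (c), where the paper uses dominated rather than monotone convergence. The locality of $F$ on $M_n$ that you flag in (b) is used implicitly in the paper's own proof as well, so you are not introducing a gap but merely making explicit a step the paper glosses over.
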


\begin{proof}
	Let us fix $n\in\N$, $t\in[0,T]$  and choose $(u_k,G_k), (u,G)\in Z$ such that $(u_k,G_k)\to (u,G)$ in $Z$ as $k\to\infty$. This and Assumption \ref{spaceAssumptions} imply
	\begin{align}\label{RegularityOfSomeFunctionals:Convergences}
	u_k(t)\rightharpoonup u(t)\quad \text{in $\EA$},\qquad u_k(t)\to u(t)\quad  \text{in $L^{\alpha+1}(M_n)$},\qquad G_k(t)\rightharpoonup G(t) \quad \text{in $L^{(\alpha+1)/\alpha}(M)$.}
	\end{align}
	In particular, the sequence $(G_k(t))_{k\in\N}\subset L^{(\alpha+1)/\alpha}(M)$ is bounded and we get 
	\begin{align*}
	&\vert\Psi_{n,t}(u,G)-\Psi_{n,t}(u_k,G_k)\vert=\vert\Real \duality{\mathbf{1}_{M_n}G(t)}{\im u(t)}-\Real \duality{\mathbf{1}_{M_n}G_k(t)}{\im u_k(t)}\vert
	\\&\le \vert\Real \duality{\mathbf{1}_{M_n}[G(t)-G_k(t)]}{\im u(t)}\vert
	+\vert\Real \duality{G_k(t)}{\im \mathbf{1}_{M_n} [u(t)- u_k(t)]}\vert
	\\&\le \vert\Real \duality{\mathbf{1}_{M_n}[G(t)-G_k(t)]}{\im u(t)}\vert+\sup_{k\in\N}\norm{G_k(t)}_{L^{(\alpha+1)/\alpha}}\, \norm{\mathbf{1}_{M_n} [u(t)- u_k(t)]}_{\LalphaPlusEinsKurz}
	\\&\longrightarrow 0\qquad \text{for $k\to \infty$},
	\end{align*}
	which shows item~a).
	Next note that \eqref{RegularityOfSomeFunctionals:Convergences} and the Lipschitz property of $F$ show $F(u_k(t))\to F(u(t))$ in $L^{(\alpha+1)/\alpha}(M_n)$. 
	As a consequence, we obtain 
	\begin{align*}
	\vert\Phi_{n,t,g}(u,G)-\Phi_{n,t,g}(u_k,G_k)\vert&\le \vert  \duality{\mathbf{1}_{M_n}[G(t)-G_k(t)]}{g}\vert +\vert  \duality{\mathbf{1}_{M_n}[F_k(u(t))-F(u(t))]}{g}\vert
	\\&\longrightarrow 0\qquad \text{for $k\to \infty$},
	\end{align*}
	for all $g\in L^{\alpha+1}(M)$. This is sufficient for item~b).
	Since $L^{(\alpha+1)/\alpha}(M)$ is separable with dual space  $(L^{(\alpha+1)/\alpha}(M))^*=L^{\alpha+1}(M)$, we can take
	$(g_l)_{l\in\N}\subset \LalphaPlusEins$ such that for all $f\in L^{(\alpha+1)/\alpha}(M)$, we obtain
	\begin{align*}
	\norm{f}_{L^{(\alpha+1)/\alpha}}=\sup_{l\in\N}\vert\duality{f}{g_l}_{L^{(\alpha+1)/\alpha},L^{\alpha+1}}\vert.
	\end{align*}
	Lebesgue's dominated convergence theorem hence proves
	\begin{align*}
	\Phi_t(u,G)&=\norm{G(t)-F(u(t))}_{L^{(\alpha+1)/\alpha}}=\lim_{n\to\infty} \norm{\mathbf{1}_{M_n}[G(t)-F(u(t))]}_{L^{(\alpha+1)/\alpha}}
	\\&=\lim_{n\to\infty} \sup_{l\in\N} \vert\duality{\mathbf{1}_{M_n}(G(t)-F(u(t)))}{g_l}_{L^{(\alpha+1)/\alpha},L^{\alpha+1}}\vert=\lim_{n\to\infty} \sup_{l\in\N} \vert\Phi_{n,t,g_l}(u,G)\vert.
	\end{align*}
	Combining this with item~b) establishes item~c).
\end{proof}

\begin{Lemma}\label{CancellationTildeF}
	Assume the setting of Proposition~\ref{NewAprioriBounds}. Then
	\begin{itemize}
		\item[a)]  it holds for all $k\in\N$ that $F(v_k(t))=\tilde{F}_k(t)$ almost surely for all $t\in [0,T]$ and
		\item[b)] it holds almost surely for all $t\in [0,T]$ that
		\begin{align*}
		\Real \duality{\tilde{F}(t)}{\im v(t)}_{L^{(\alpha+1)/\alpha},\LalphaPlusEinsKurz}=0.
		\end{align*}		
	\end{itemize}
\end{Lemma}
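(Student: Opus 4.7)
My proof plan decomposes the result into the two claims. For part (a), I will exploit the equidistribution $\tilde{\Prob}^{(v_k,\tilde{F}_k)}=\Prob^{(u_{n_k},F(u_{n_k}))}$ from Proposition~\ref{PropAlmostSureConvergence} together with the Borel measurability of the functional $\Phi_t\colon Z\to \R$, $(u,G)\mapsto \norm{G(t)-F(u(t))}_{\LalphaPlusEinsDualNorm}$, established in Lemma~\ref{RegularityOfSomeFunctionals}~c). For each fixed $t\in[0,T]$,
\begin{align*}
\Etilde\!\left[\norm{\tilde{F}_k(t)-F(v_k(t))}_{\LalphaPlusEinsDualNorm}\right]=\E\!\left[\norm{F(u_{n_k}(t))-F(u_{n_k}(t))}_{\LalphaPlusEinsDualNorm}\right]=0,
\end{align*}
giving $\tilde{F}_k(t)=F(v_k(t))$ $\tilde{\Prob}$-a.s. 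To promote this to an ``a.s.\ for all $t$'' statement, I will intersect the null sets over rational times and then invoke pathwise continuity: $\tilde{F}_k\in C([0,T],\LalphaPlusEinsDual)$ directly from $(v_k,\tilde{F}_k)\in Z$, while $v_k\in C([0,T],X_\gamma)\hookrightarrow C([0,T],\LalphaPlusEins)$ combined with the local Lipschitz estimate for $F\colon \LalphaPlusEins\to\LalphaPlusEinsDual$ yields continuity of $t\mapsto F(v_k(t))$ in $\LalphaPlusEinsDual$.

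For part (b), I will combine part (a) with the gauge invariance \eqref{nonlinearityComplex} from Assumption~\ref{nonlinearAssumptions} and the continuity of $\Psi_{n,t}\colon Z\to \R$ asserted in Lemma~\ref{RegularityOfSomeFunctionals}~a). Fix $n\in\N$ and $t\in[0,T]$. Applying \eqref{nonlinearityComplex} with $u=v_k(t)$ and using part (a) to substitute $F(v_k(t))=\tilde{F}_k(t)$ shows that $\Psi_{n,t}(v_k,\tilde{F}_k)=0$ $\tilde{\Prob}$-a.s. The almost sure convergence $(v_k,\tilde{F}_k)\to(v,\tilde{F})$ in $Z$ (Proposition~\ref{PropAlmostSureConvergence}) together with the continuity of $\Psi_{n,t}$ then transfers the identity to the limit, giving
\begin{align*}
\Real \duality{\mathbf{1}_{M_n}\tilde{F}(t)}{\im v(t)}=\Psi_{n,t}(v,\tilde{F})=0\qquad \tilde{\Prob}\text{-a.s.}
\end{align*}
Next, I remove the spatial cutoff by letting $n\to\infty$: since $M=\bigcup_{n} M_n$ and $\tilde{F}(t)\in \LalphaPlusEinsDual$, Lebesgue's dominated convergence yields $\mathbf{1}_{M_n}\tilde{F}(t)\to\tilde{F}(t)$ in $\LalphaPlusEinsDual$, so the pairing with the fixed element $\im v(t)\in\LalphaPlusEins$ passes to the limit. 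Finally, I upgrade the pointwise-in-$t$ statement to ``for all $t$'' via continuity of $t\mapsto \Real\duality{\tilde{F}(t)}{\im v(t)}$, which follows from the strong continuity of both $v$ (into $\LalphaPlusEins$) and $\tilde{F}$ (into $\LalphaPlusEinsDual$) provided by the definition of $Z$.

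The only point requiring care is the bookkeeping of the exceptional null sets when passing from ``for each fixed $t$ a.s.'' to ``a.s.\ for all $t$'': the argument works by taking the union of countably many null sets indexed by rational times and then extending from this dense subset by pathwise continuity in the appropriate target spaces. Everything else reduces to the equidistribution principle, the gauge invariance of $F$, and the continuity and measurability statements already packaged in Lemma~\ref{RegularityOfSomeFunctionals}.
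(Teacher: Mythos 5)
Your proposal is correct and follows essentially the same route as the paper: equidistribution plus the measurability of $\Phi_t$ from Lemma~\ref{RegularityOfSomeFunctionals}~c) for part~a), upgraded to all $t$ by path continuity of $\tilde{F}_k$ and of $t\mapsto F(v_k(t))$ via $X_\gamma\hookrightarrow\LalphaPlusEins$ and the Lipschitz estimate for $F$; then for part~b) the gauge invariance \eqref{nonlinearityComplex}, the continuity of $\Psi_{n,t}$ together with the almost sure convergence in $Z$, and dominated convergence to remove the cutoff $\mathbf{1}_{M_n}$. Your explicit bookkeeping of the null sets over a countable dense set of times is the same mechanism the paper uses implicitly.
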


\begin{proof}
	\emph{Step 1.} By item c) in Lemma \ref{RegularityOfSomeFunctionals} and Proposition~\ref{PropAlmostSureConvergence}, we obtain for all $t\in [0,T]$   that 
	\begin{align*}
	\Etilde\norm{\tilde{F}_k(t)-F(v_k(t))}_{\LalphaPlusEinsDualNorm}&= \int_Z \norm{G(t)-F(u(t))}_{\LalphaPlusEinsDualNorm} \,\tilde{\Prob}^{(v_k,\tilde{F}_k)}(\df u,\df G)\\
	&= \int_Z \norm{G(t)-F(u(t))}_{\LalphaPlusEinsDualNorm} \,\Prob^{(u_{n_k},F(u_{n_k}))}(\df u,\df G)
	\\&=\E\norm{F(u_{n_k}(t))-F(u_{n_k}(t))}_{\LalphaPlusEinsDualNorm}=0.
	\end{align*}
	We infer that $\tilde{F}(t)=F(v_k(t))$ for all $t\in [0,T]$ almost surely. Moreover, observe that Assumption~\ref{spaceAssumptions}~v) and  Assumption~\ref{nonlinearAssumptions}~ii) imply that the map  $u\ni  X_\gamma\mapsto F(u)\in \LalphaPlusEinsDual$ is continuous. The fact that $(v_k,\tilde{F}_k)\in Z$ therefore yields  $\tilde{F}_k, F(v_k)\in C([0,T];\LalphaPlusEinsDual)$ almost surely.  We infer that
	$\tilde{F}(t)=F(v_k(t))$ almost surely  for all $t\in [0,T]$, which establishes item~a). \\
	\emph{Step 2.} From the first step and Assumption~\ref{nonlinearAssumptions}~i) we conclude that
	almost surely  for all $n\in\N$, $t\in [0,T]$  we have
	\begin{align*}
	\Real \duality{\mathbf{1}_{M_n}\tilde{F}_k(t)}{\im v_k(t)}_{L^{(\alpha+1)/\alpha},\LalphaPlusEinsKurz}
	=\Real \duality{\mathbf{1}_{M_n}F(v_k(t))}{\im v_k(t)}_{L^{(\alpha+1)/\alpha},\LalphaPlusEinsKurz}=0.
	\end{align*}
	Let us recall from Proposition~\ref{PropAlmostSureConvergence} that  $(v_k,\tilde{F}_k)\to (v,\tilde{F})$ almost surely  in $Z$.
	Item a) in Lemma~\ref{RegularityOfSomeFunctionals} therefore implies  almost surely for all $n\in\N$,  $t\in[0,T]$ that 
	\begin{align*}
	\Real \duality{\mathbf{1}_{M_n}\tilde{F}(t)}{\im v(t)}_{L^{(\alpha+1)/\alpha},\LalphaPlusEinsKurz}
	=\lim_{k\to\infty}\Real \duality{\mathbf{1}_{M_n}\tilde{F}_k(t)}{\im v_k(t)}_{L^{(\alpha+1)/\alpha},\LalphaPlusEinsKurz}=0.
	\end{align*}
	Then, Lebesgue's dominated convergence theorem proves almost surely for all $t\in[0,T]$ that 
	\begin{align*}
	\Real \duality{\tilde{F}(t)}{\im v(t)}_{L^{(\alpha+1)/\alpha},\LalphaPlusEinsKurz}
	=\lim_{n\to \infty}\Real \duality{\mathbf{1}_{M_n}\tilde{F}(t)}{\im v(t)}_{L^{(\alpha+1)/\alpha},\LalphaPlusEinsKurz}=0.
	\end{align*}
	The proof of item~b) is thus completed.
\end{proof}

\begin{Lemma}\label{massConservationMartingale}
	The following assertions hold: 
	\begin{enumerate}[a)]
		\item We have $\norm{v(t)}_H=\norm{u_0}_H$ almost surely for all $t\in [0,T]$.
		\item We have $\norm{v_k(t)}_H=\norm{S_{n_k}u_0}_H$ almost surely for all $t\in [0,T]$, $k\in\N$.
	\end{enumerate}
\end{Lemma}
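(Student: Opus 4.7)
For part (b) the strategy is to transfer the mass conservation from $u_{n_k}$ to $v_k$ via the equality of laws. By Proposition~\ref{MassEstimateGalerkinSolution}, $\|u_{n_k}(t)\|_H = \|S_{n_k}u_0\|_H$ almost surely for every $t\in[0,T]$, and since $u_{n_k}$ has continuous paths in $H_{n_k}$, the event of simultaneous equality for all $t\in[0,T]$ coincides with the intersection over a countable dense subset of $[0,T]$. On $Z$ the map $(u,G)\mapsto \|u(t)\|_H$ is lower semicontinuous (weak convergence in $\EA$ implies weak convergence in $H$), hence Borel measurable, so the set $\{(u,G)\in Z: \|u(t_j)\|_H=\|S_{n_k}u_0\|_H \text{ for all } t_j\in \Q\cap[0,T]\}$ is measurable in $Z$. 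Intersecting with the closed set $C([0,T],H_{n_k}\times \LalphaPlusEinsDual)$ (cf.\ Lemma~\ref{Lemma:galerkinSpaceClosed}), the identity $\tilde{\Prob}^{(v_k,\tilde{F}_k)}=\Prob^{(u_{n_k},F(u_{n_k}))}$ together with the fact that $(v_k,\tilde{F}_k)\in C([0,T],H_{n_k}\times\LalphaPlusEinsDual)$ a.s.\ (Proposition~\ref{NewAprioriBounds}~a)) yields the claim for all $t\in[0,T]$ simultaneously.

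For part (a) the plan is to apply an It\^o formula to $\|v(t)\|_H^2$ using the equation \eqref{equationWithPseudoNonlin}. Since $v\in L^\infty(0,T;\EA)$, $\tilde F\in L^\infty(0,T;\LalphaPlusEinsDual)$, $\mu(v)\in L^\infty(0,T;H)$ (by Assumption~\ref{stochasticAssumptions}) and $B(v)\in L^2_{\text{loc}}(\HS(Y,H))$, the generalized It\^o formula (of Gelfand-triple/Pardoux--Krylov--Rozovskii type, applied to the triple $\EA\hookrightarrow H\hookrightarrow \EAdual$ with the extra $\LalphaPlusEinsDual$ pairing split off) gives
\begin{align*}
\|v(t)\|_H^2 &= \|u_0\|_H^2
+ 2\int_0^t \Real \duality{-\im A v(s)}{v(s)}\,\df s
+ 2\int_0^t \Real \duality{-\im \tilde F(s)}{v(s)}_{L^{(\alpha+1)/\alpha},\LalphaPlusEinsKurz}\df s \\
&\quad + 2\int_0^t \Real \skpH{\mu(v(s))}{v(s)}\,\df s
- 2\Real \int_0^t \skpH{\im B(v(s))\,\df W'(s)}{v(s)}
+ \sumM \int_0^t \|B_m v(s)\|_H^2 \df s.
\end{align*}

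Each deterministic contribution vanishes or cancels: one has $\Real\duality{-\im Av(s)}{v(s)} = \Real\bigl(-\im \|\sqrtA v(s)\|_H^2\bigr)=0$; the pairing with $\tilde F(s)$ vanishes by Lemma~\ref{CancellationTildeF}~b); and since $B_m$ is selfadjoint on $H$, one has $\skpH{B_m^2 v(s)}{v(s)}=\|B_m v(s)\|_H^2\in\R$, so $2\Real\skpH{\mu(v(s))}{v(s)} = -\sumM \|B_m v(s)\|_H^2$, which exactly cancels the It\^o correction $\sumM \|B_m v(s)\|_H^2$. Finally, selfadjointness of each $B_m$ also makes $\skpH{B_m v(s)}{v(s)}\in \R$, so $\Real\skpH{-\im B_m v(s)}{v(s)}=\Im\skpH{B_m v(s)}{v(s)}=0$ and the stochastic integral vanishes term by term. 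Therefore $\|v(t)\|_H^2=\|u_0\|_H^2$ almost surely for every $t\in[0,T]$, and continuity of $v$ in $\EAdual$ together with a standard density argument upgrades this to ``simultaneously for all $t$.''

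The main obstacle is the justification of the It\^o formula in this mixed functional setting: the equation is posed in $\EAdual$ but the nonlinear drift $\tilde F$ naturally lives in $\LalphaPlusEinsDual$, while the diffusion takes values in $H$. One needs an It\^o formula that admits the drift decomposition $\EAdual+\LalphaPlusEinsDual$ and evaluates the pivot norm $\|\cdot\|_H$; this is the step where one invokes a suitable version (e.g.\ a straightforward adaptation of the Krylov--Rozovskii It\^o formula, noting that $v(s)$ pairs with the $\LalphaPlusEinsDual$-part in the sense $\duality{\cdot}{\cdot}_{L^{(\alpha+1)/\alpha},\LalphaPlusEinsKurz}$, as remarked after Assumption~\ref{spaceAssumptions}).
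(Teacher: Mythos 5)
Your proof is correct and follows essentially the same route as the paper: part b) by transferring Proposition~\ref{MassEstimateGalerkinSolution} through the equality of laws, and part a) by applying It\^o's formula to $\norm{v(t)}_H^2$, using selfadjointness of the $B_m$ to cancel the Stratonovich correction against the It\^o correction and to kill the stochastic integral, and Lemma~\ref{CancellationTildeF}~b) to remove the $\tilde F$ term. The only (inessential) difference is that the paper makes the formal It\^o computation rigorous via Yosida approximations $\nu(\nu+A)^{-1}$ and a passage $\nu\to\infty$, whereas you invoke a variational (Krylov--Rozovskii type) It\^o formula directly; both are standard resolutions of the same regularity issue.
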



\begin{proof}
	\emph{Step 1.} 
	We formally apply  Ito's formula to the It\^o process from \eqref{equationWithPseudoNonlin} and the function
	$\mathcal{M}: {H} \to \R$ defined by $\mathcal{M}(w):=\norm{w}_{H}^2,$ which is twice continuously Fr\'{e}chet-differentiable with 
	\begin{align*}
	\mathcal{M}'[w]h_1&= 2 \Real \skpH{w}{ h_1}, \qquad
	\mathcal{M}''[w] \left[h_1,h_2\right]= 2 \Real \skpH{ h_1}{h_2}
	\end{align*}
	for $w, h_1, h_2\in {H}.$ This yields
	\begin{align}\label{Formalcalculation}
	\norm{ v(t)}_{H}^2=&\norm{ u_0}_{H}^2+2 \int_0^t \Real \skpH{ v(s)}{-\im A v(s)-\im  \tilde{F}(s)+\mu(v(s))} \df s\nonumber\\
	&- 2  \int_0^t \Real \skpH{ v(s)}{\im B(v(s))\df W'(s) }
	+\sumM \int_0^t   
	\Vert  B_m v(s)\Vert_{H}^2\df s
	\end{align}
	almost surely  in $[0,T]$. 	
	In addition, observe that the assumption that $B_m$, $m\in\N$, are selfadjoint operators implies $\Real \skpH{ v(t)}{\im  B_m v(t)}=0$ and
	\begin{align*}
	\sumM \int_0^t \Vert  B_m v(s)\Vert_{H}^2\df s=\sumM\int_0^t\Real \skpH{ v(s)}{B_m^2v(s)} \df s=-2\int_0^t\Real \skpH{ v(s)}{\mu(v(s))} \df s. 
	\end{align*}
	This, \eqref{Formalcalculation}, and the formal identity $\Real \skpH{ v(t)}{\im  A v(t)}=0$
	ensure
		\begin{align}\label{FormalcalculationTwo}
		\norm{ v(t)}_{H}^2=\norm{ u_0}_{H}^2+2 \int_0^t \Real \skpH{ v(s)}{-\im  \tilde{F}(s)} \df s
		\end{align}
		almost surely  in $[0,T]$.	Since Lemma~\ref{CancellationTildeF} implies $\Real \duality{\tilde{F}(t)}{\im v(t)}=0$ almost surely for all $t\in [0,T]$, this is sufficient to formally obtain the assertion in item~a). 
	The calculation from above can be made rigorous by a regularization procedure via Yosida approximations $\Yosida:=\nu\left(\nu+A\right)^{-1}$ for $\nu>0$
	applied to \eqref{equationWithPseudoNonlin}
	 and a limit process $\nu\to \infty$. This is standard and we refer to, e.g., \cite{BarbuL2}, \cite{ExistencePaper}, and \cite{dissertationFH} for are more detailed treatment of this procedure. \\
	
	\emph{Step 2.} From Lemma~\ref{MassEstimateGalerkinSolution}, we already know that $\norm{u_n(t)}_{H}=\norm{S_n u_0}_{H}$ almost surely for all $n\in\N$, $t\in [0,T]$. 
	Item~b) then follows immediately since the laws of $u_{n_k}$ and $v_k$ coincide by Proposition~\ref{PropAlmostSureConvergence}.
\end{proof}

\begin{Korollar}\label{Cor:NonlinearityCoincides}
	We have $\tilde{F}(t)=F(v(t))$ $\Prob'$-almost surely for all $t\in [0,T]$.
\end{Korollar}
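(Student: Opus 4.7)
The plan is to exploit the mass conservation from Lemma~\ref{massConservationMartingale} together with the subcritical embedding $X_\gamma\hookrightarrow \LalphaPlusEins$ of Assumption~\ref{spaceAssumptions}~(iii) to upgrade the weak convergence $v_k\rightharpoonup v$ to strong convergence in $\LalphaPlusEins$. Once this is available, continuity of $F$ combined with the identity $\tilde{F}_k=F(v_k)$ provided by Lemma~\ref{CancellationTildeF}~a) will identify the weak limit $\tilde{F}(t)$ as $F(v(t))$ for each $t$.

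I shall first fix $t\in[0,T]$ and work on the almost sure event on which $(v_k,\tilde{F}_k)\to (v,\tilde{F})$ in $Z$ and the identities of Lemma~\ref{massConservationMartingale} hold. The seminorms $p_h$ defining $\mathcal{Z}$ with $h=(h_1,0)$, $h_1\in \EAdual$, yield $v_k(t)\rightharpoonup v(t)$ in $\EA$; via the Gelfand triple $\EA\hookrightarrow H\hookrightarrow \EAdual$ this passes to weak convergence in $H$. Lemma~\ref{massConservationMartingale} together with $S_{n_k}u_0\to u_0$ in $H$ (Lemma~\ref{convergenceProperty}) yields $\norm{v_k(t)}_H=\norm{S_{n_k}u_0}_H\to \norm{u_0}_H=\norm{v(t)}_H$. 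Since $H$ is a Hilbert space, weak convergence combined with convergence of norms implies strong convergence, so $v_k(t)\to v(t)$ in $H$.

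The next step is to bootstrap to $\LalphaPlusEins$ by interpolation. As weakly convergent sequences are bounded, $(v_k(t))_{k\in\N}$ is bounded in $\EA=X_{1/2}$, and the standard interpolation inequality between $H=X_0$ and $X_{1/2}$ together with the embedding $X_\gamma\hookrightarrow \LalphaPlusEins$ from Assumption~\ref{spaceAssumptions}~(iii) gives
\begin{align*}
\norm{v_k(t)-v(t)}_\LalphaPlusEinsKurz\lesssim \norm{v_k(t)-v(t)}_{X_\gamma}\lesssim \norm{v_k(t)-v(t)}_H^{1-2\gamma}\norm{v_k(t)-v(t)}_\EA^{2\gamma}\longrightarrow 0.
\end{align*}
The Lipschitz bound for $F\colon\LalphaPlusEins\to\LalphaPlusEinsDual$ recalled after Assumption~\ref{nonlinearAssumptions} then yields $F(v_k(t))\to F(v(t))$ strongly in $\LalphaPlusEinsDual$. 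On the other hand, the seminorms $p_h$ with $h=(0,h_2)$, $h_2\in \LalphaPlusEins$, give $\tilde{F}_k(t)\rightharpoonup \tilde{F}(t)$ weakly in $\LalphaPlusEinsDual$, while Lemma~\ref{CancellationTildeF}~a) provides $\tilde{F}_k(t)=F(v_k(t))$. Since strong convergence implies weak convergence, uniqueness of weak limits forces $\tilde{F}(t)=F(v(t))$ almost surely for each $t\in[0,T]$.

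The main obstacle is the interpolation step that upgrades $H$-strong convergence to $\LalphaPlusEins$-strong convergence: this is the only place where the subcriticality assumption $\gamma<1/2$ is genuinely used, and it is precisely where the mass conservation identity compensates for the lack of compactness of the embedding $\EA\hookrightarrow \LalphaPlusEins$ in the non-compact setting treated here.
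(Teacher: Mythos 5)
Your proposal is correct and follows essentially the same route as the paper: mass conservation plus weak convergence gives $v_k(t)\to v(t)$ strongly in $H$, interpolation of $X_\gamma$ between $H$ and $\EA$ together with the embedding $X_\gamma\hookrightarrow \LalphaPlusEins$ and the boundedness of $(v_k(t))_k$ in $\EA$ upgrades this to $\LalphaPlusEins$, and the Lipschitz estimate for $F$ combined with $\tilde{F}_k=F(v_k)$ and the weak convergence $\tilde{F}_k(t)\rightharpoonup\tilde{F}(t)$ identifies the limit. No gaps.
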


\begin{proof}
	Note that Lemma~\ref{convergenceProperty}  and Lemma~\ref{massConservationMartingale} prove that $\norm{v_k(t)}_H\to \norm{v(t)}_H$ almost surely for all $t\in [0,T]$.
	Moreover, Proposition~\ref{PropAlmostSureConvergence} shows  $v_k(t)\rightharpoonup v(t)$ in $H$ almost surely for all $t\in [0,T]$. We therefore deduce $v_k(t)\to v(t)$ in $H$ almost surely for all $t\in [0,T]$.
	 Lemma \ref{CancellationTildeF}, the embedding $X_\gamma\hookrightarrow \LalphaPlusEins$, interpolation of $X_\gamma$ between $H$ and $\EA$, and the fact that the sequence $(v_k(t))_k$ in $\EA$ is bounded almost surely for all $t\in [0,T]$ due to weak convergence prove 
	\begin{align*}
		\norm{\tilde{F}_k(t)-F(v(t))}_{L^{\alpha/(\alpha+1)}}&=\norm{F(v_k(t))-F(v(t))}_{L^{\alpha/(\alpha+1)}}
		\\&\lesssim \big(\norm{v_k(t)}_{L^{\alpha+1}}+\norm{v(t)}_{L^{\alpha+1}}\big)^{\alpha-1} \norm{v_k(t)-v(t)}_{L^{\alpha+1}}	
				\\&\lesssim \big(\norm{v_k(t)}_{X_\gamma}+\norm{v(t)}_{X_\gamma}\big)^{\alpha-1} \norm{v_k(t)-v(t)}_{X_\gamma}
		\\&\lesssim \big(\norm{v_k(t)}_{\EA}+\norm{v(t)}_{\EA}\big)^{\alpha-1} \norm{v_k(t)-v(t)}_{\EA}^{2\gamma} \norm{v_k(t)-v(t)}_{H}^{1-2\gamma}
				\\&\lesssim \big(\sup_{k\in\N}\norm{v_k(t)}_{\EA}+\norm{v(t)}_{\EA}\big)^{\alpha-1+2\gamma}  \norm{v_k(t)-v(t)}_{H}^{1-2\gamma}
		\to 0 
	\end{align*}
	almost surely for all $t\in[0,T].$ Proposition \ref{PropAlmostSureConvergence} shows $\tilde{F}_k(t)\rightharpoonup \tilde{F}(t)$ in $\LalphaPlusEinsDual$ almost surely for all $t\in[0,T].$
	We thus have $\tilde{F}(t)=F(v(t))$ almost surely for all $t\in [0,T]$.
\end{proof}

\begin{Theorem}\label{MainTheorem}
	Let $T>0$ and $u_0\in E_A.$  Under the Assumptions $\ref{spaceAssumptions}$, $\ref{nonlinearAssumptions}$, $\ref{focusing},$ $\ref{stochasticAssumptions},$ there exists an analytically weak global martingale solution  $\left({\Omega}',{\F}',{\Prob}',{W}',{\Filtration}',v\right)$ of $\eqref{ProblemStratonovich}$ which additionally satisfies $v\in \weaklyContinousEA$ almost surely, $v\in L^q({\Omega}',\LinftyEA)$ 
	for all $q\in [1,\infty)$, and
$\norm{v(t)}_{H}=\norm{u_0}_{H}$
		almost surely for all $t\in [0,T].$
\end{Theorem}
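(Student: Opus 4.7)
The plan is to simply assemble the results already established in Sections 3--5, since by this point all of the analytic work has been done. First, I would invoke Proposition~\ref{Prop:pseudoEquation} to obtain the extended stochastic basis $(\Omega',\F',\Filtration',\Prob')$, the $Y$-cylindrical Wiener process $W'$, and the $\EA$-valued process $v$ (adapted to $\Filtration'$ by construction) satisfying
\[
v(t) = u_0 + \int_0^t \left[-\im A v(s) - \im \tilde{F}(s) + \mu(v(s))\right]\,\df s - \im \int_0^t B(v(s))\,\df W'(s)
\]
$\Prob'$-almost surely for all $t\in[0,T]$, where $\tilde{F}$ appears in place of $F(v)$. The second step is to invoke Corollary~\ref{Cor:NonlinearityCoincides}, which identifies $\tilde{F}(t) = F(v(t))$ almost surely for every $t \in [0,T]$; substituting this into the equation above yields precisely \eqref{equationWithPseudoNonlin} from Definition~\ref{SolutionDefStochastics}. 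Since the definition of $\mu$ was tailored so that the It\^o form \eqref{Problem} is equivalent to the Stratonovich form \eqref{ProblemStratonovich2}, this shows that $v$ is a martingale solution of \eqref{ProblemStratonovich} in the required sense.

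It then remains to check the three additional properties. The fact that $v \in Z$ $\Prob'$-almost surely is inherited from Proposition~\ref{PropAlmostSureConvergence}, since $v$ is obtained there as the almost-sure limit in $Z$ of the sequence $(v_k)_k$. In particular $v\in C_w([0,T],\EA)\cap C([0,T],X_\gamma)$ almost surely, so we have $v\in \weaklyContinousEA$, and continuity in $X_\theta$ for every $\theta\in(-\infty,1/2)$ follows by interpolating between $C([0,T],X_\gamma)$ and $C_w([0,T],\EA)\hookrightarrow L^\infty(0,T;\EA)$ (using Lemma~\ref{Lemma:LSC} for boundedness). The bound $v \in L^q(\Omega',\LinftyEA)$ for every $q\in[1,\infty)$ is exactly item~b) of Proposition~\ref{NewAprioriBounds}, and the mass conservation $\norm{v(t)}_{H}=\norm{u_0}_{H}$ almost surely for every $t\in[0,T]$ is exactly item~a) of Lemma~\ref{massConservationMartingale}.

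Since the heavy lifting --- uniform $\EA$- and $\LalphaPlusEinsDual$-estimates (Propositions~\ref{EstimatesGalerkinSolutionDefocusing} and \ref{EstimatesGalerkinSolutionFocusing}), the Aldous stochastic equicontinuity (Proposition~\ref{AldousConditionTrunction}), the tightness criterion in $(Z,\mathcal{Z})$ (Proposition~\ref{CompactnessDeterministic} and Corollary~\ref{TightnessCriterion}), the Skorohod--Jakubowski extraction (Proposition~\ref{PropAlmostSureConvergence}), the passage to the limit in the approximating martingale problem (Lemma~\ref{martingaleRepresentation}), the martingale representation yielding $W'$ (Proposition~\ref{Prop:pseudoEquation}), and the identification of the weak limit of the nonlinearity via mass conservation (Lemma~\ref{massConservationMartingale} and Corollary~\ref{Cor:NonlinearityCoincides}) --- has all been carried out in preceding lemmas, the proof of Theorem~\ref{MainTheorem} itself contains no remaining obstacle and amounts to two or three lines of bookkeeping citing exactly these results. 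If there is any subtle point to flag, it is the interplay between the two filtrations in Proposition~\ref{Prop:pseudoEquation}, where one must observe that the adaptedness of $v$ to the product filtration $\F'_t=\tilde{\F}_t\otimes\tilde{\tilde{\F}}_t$ is automatic from $\tilde{\F}_t=\sigma(v(s): s\le t)$.
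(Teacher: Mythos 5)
Your proposal is correct and follows essentially the same route as the paper: assemble Proposition~\ref{Prop:pseudoEquation}, Corollary~\ref{Cor:NonlinearityCoincides}, Proposition~\ref{NewAprioriBounds}, and Lemma~\ref{massConservationMartingale}, and then upgrade continuity from $X_\gamma$ to $X_\theta$ for $\theta\in(\gamma,1/2)$ via the interpolation inequality between $X_\gamma$ and $\EA$ combined with the a.s.\ $L^\infty(0,T;\EA)$ bound, which is exactly the one nontrivial step the paper's proof also singles out.
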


\begin{proof}
	We choose the system $({\Omega}',{\F}',{\Prob}',{W}',{\Filtration}',v)$ as in Proposition~\ref{Prop:pseudoEquation} and recall that 
	for $t\in [0,T]$,  $\omega'=(\tilde{\omega},\tilde{\tilde{\omega}})\in \Omega'$, we have 
		\begin{align*}
		v(t,\omega'):=v(t,\tilde{\omega})\qquad \text{and}\qquad \tilde{F}(t,\omega'):=\tilde{F}(t,\tilde{\omega}).
		\end{align*}
	Combining this and Corollary~\ref{Cor:NonlinearityCoincides} with the fact that $v\colon\tilde{\Omega}\to Z$ and
	\begin{align*}
		Z= C_w([0,T],\EA\times \LalphaPlusEinsDual)\cap C([0,T],X_\gamma\times \LalphaPlusEinsDual),
	\end{align*}
	we obtain 	
	\begin{align*}
	v(t)= u_0+ \int_0^t \left[-\im A v(s)-\im  F(v(s))+\mu(v(s))\right] \df s-\im \int_0^t B(v(s))\df W'(s) 
	\end{align*}
	almost surely for all $t\in [0,T]$, $u\in \weaklyContinousEA$ almost surely, $v\in L^q({\Omega}',\LinftyEA)$ 
		for all $q\in [1,\infty)$, and
		$\norm{v(t)}_{H}=\norm{u_0}_{H}$
		almost surely for all $t\in [0,T].$ It therefore remains to prove that $u$ has almost surely continuous paths in $X_\theta$ for $\theta\in (\gamma,1/2)$. We can deduce this from $v\in C([0,T],X_\gamma)$, $v\in L^q({\Omega}',\LinftyEA)$, and the interpolation inequality
		\begin{align*}
			\norm{w}_{X_\theta}\lesssim \norm{w}_{X_\gamma}^{(2\theta-1)/(2\gamma-1)}\norm{w}_{\EA}^{(2\gamma-2\theta)/(2\gamma-1)}.
		\end{align*} 
\end{proof}

%

\section{Examples}\label{ExamplesSection}

In this section, we would like to use the existence result for the abstract stochastic NLS proved in Theorem~\ref{MainTheorem} to deduce Theorem~\ref{mainTheoremIntro} which deals with the concrete cases of domains and manifolds. We would like to point out that the choice of examples is rather illustrative to keep the level of technicality low and there are other situation in which Theorem~\ref{MainTheorem} applies. 
As in \cite{ExistencePaper} and \cite{dissertationFH} it is possible to replace the Laplacian $-\Delta$ by the fractional Laplacian $(-\Delta)^\beta$ if one adapts the admissible range for $\alpha$ according to the choice of the fractional power $\beta$. Another possible generalization can be made by replacing the Laplacians by general elliptic operators with essentially bounded real valued coefficients (cf., e.g. \cite{OuhabazHeatEquations}, Chapter 6, for Gaussian estimates in this case). Moreover, we believe that our setting can be also used in the context of Schrödinger operators $A=-\Delta+V$ on $\Rd$ for some potentials $V$ subject to further conditions and in the context of Laplacians on graphs or fractals. 

We point out that in the setting of Theorem~\ref{mainTheoremIntro}, it is straightforward and similar to \cite{ExistencePaper} and \cite{dissertationFH} to verify that Assumptions \ref{nonlinearAssumptions}, \ref{focusing}, and \ref{stochasticAssumptions} are indeed true. In the following, we therefore concentrate on Assumption~\ref{spaceAssumptions}.

\subsection{The stochastic NLS on domains in $\R^d$}

Let $M\subset \Rd$ be a domain. We define fractional Sobolev spaces $H^s(M)$, $s\ge 0$, via complex interpolation, i.e. $H^s(M)=W^{s,2}(M)$ for $s\in \N_0$ and 
\begin{align*}
H^s(M)=\big[W^{ \lfloor s\rfloor,2}(M),W^{\lfloor s\rfloor+1,2}(M)\big]_{s-\lfloor s\rfloor},\qquad s\in [0,\infty)\setminus \N_0
\end{align*}
and, similarly, $H^s_0(M)=W^{s,2}_0(M)$ for $s\in \N_0$ and 
\begin{align*}
H^s_0(M)=\big[W^{ \lfloor s\rfloor,2}_0(M),W^{\lfloor s\rfloor+1,2}_0(M)\big]_{s-\lfloor s\rfloor},\qquad s\in [0,\infty)\setminus \N_0.
\end{align*}
For $V\in \{H^1_0(M),H^1(M)\}$, we further consider the form 
$\mathfrak{a}_V: V \times V \to \C$ with
\begin{align*}
\mathfrak{a}_V(u,v)=\int_M \nabla u \cdot \nabla \bar{v} \, \df x, \quad u,v\in V.
\end{align*}
In the case $V=H^1_0(M)$, the operator associated to $\mathfrak{a}_V$ is the  Dirichlet Laplacian $-\Delta_D$.  
The Neumann Laplacian $-\Delta_N$ is associated to $\mathfrak{a}_{H^1(M)}$. Both $-\Delta_D$ and $-\Delta_N$ are selfadjoint and nonnegative.

\subsubsection{Neumann boundary conditions}

From now on, we assume that $M\subset \Rd$ is a  Lipschitz domain. 
Note that due to the square root property (cf.,e.g., \cite{OuhabazHeatEquations}, Theorem 8.1), we have $E_A=H^1(M).$ By choosing $S=\varepsilon-\Delta_N$ for some $\varepsilon>0$ and applying Theorem 6.10 in \cite{OuhabazHeatEquations}, we guarantee that $e^{-tS}$ has Gaussian bounds. This proves Assumption~\ref{spaceAssumptions} (i) for $p_0=1$. 
Let us recall Stein's extension theorem (cf., e.g., Adams \& Fournier \cite{Adams}, Theorem 5.24): There is a total extension operator $E_M$ for $M$, i.e. a linear operator $E_M\colon L^0(M)\to L^0(\R^d)$ such that for every $p\in [1,\infty)$, $k\in\N_0$ there is $K=K(k,p)\in (0,\infty)$ such that for every $u\in W^{k,p}(M)$ it holds that
\begin{align}\label{Stein}
	E_M u=u\qquad \text{a.e. on $M$} \qquad \text{and}\qquad \norm{E_M u}_{W^{k,p}(\R^d)}\le K \norm{u}_{W^{k,p}(M)}.
\end{align}
Note that by complex interpolation, this proves $E_M\in \LinearOperatorsTwo{H^s(M)}{H^s(\R^d)}$.
Since $A$ is a selfadjoint operator, we have
\begin{align*}
	X_{\theta/2}=[H,\EA]_{\theta}=[L^2(M),H^1(M)]_{\theta}=H^\theta(M),\qquad 0\le \theta \le 1.
\end{align*}
Due to the assumption $\alpha \in (1,1+\frac{4}{(d-2)_+})$ and, e.g, Theorem 2.8.1 in \cite{TriebelInterpolationTheory},  there is $\gamma\in [0,1/2)$ such that the Sobolev embedding $H^{2\gamma}(\R^d)\hookrightarrow L^{\alpha+1}(\Rd)$ holds. This and \eqref{Stein} yield that  there is $\gamma\in [0,1/2)$ with $X_\gamma\hookrightarrow \LalphaPlusEins$, since
\begin{align*}
	\norm{u}_{\LalphaPlusEins}=\norm{E_M u}_{\LalphaPlusEins}\le \norm{E_M u}_{L^{\alpha+1}(\Rd)}\lesssim \norm{E_M u}_{H^{2\gamma}(\R^d)}\lesssim \norm{ u}_{H^{2\gamma}(M)}=\norm{ u}_{X_\gamma}.
\end{align*}
We have thus established Assumption~\ref{spaceAssumptions} (iii). Finally, Assumption~\ref{spaceAssumptions} (ii) is a consequence of the following Lemma and $E_A=H^1(M).$

\begin{Lemma}
	Let $M\subset \R^d$ be a Lipschitz domain, let $M_n\subset M$, $n\in\N$, satisfy $M_n=\{x\in M\colon \vert x\vert\le n\}$, and take  $(u_k)_{k\in\N}\subset H^1(M)$, $u\in H^1(M)$ with $u_k\rightharpoonup u$ in $H^1(M)$ for $k\to \infty$. Then, for all $n\in\N$ it holds that $u_k\to u$ in $L^{\alpha+1}(M_n)$ as $k\to \infty$.
\end{Lemma}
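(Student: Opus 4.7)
The plan is to reduce the claim to a classical Rellich--Kondrachov compact embedding on a bounded Lipschitz domain, using Stein's extension operator to handle the fact that $M$ itself may be unbounded. The key point is that $M_n$ sits inside the ball $B(0,n)$, so once functions have been extended to all of $\R^d$, their restrictions to $B(0,n)$ live on a perfectly nice bounded Lipschitz domain on which compact embeddings into $L^{\alpha+1}$ are available.

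First, invoke Stein's total extension operator $E_M \in \mathcal{L}(H^1(M),H^1(\R^d))$ from \eqref{Stein}. Since $E_M$ is linear and bounded, $u_k \rightharpoonup u$ in $H^1(M)$ implies $E_M u_k \rightharpoonup E_M u$ in $H^1(\R^d)$, and in particular the restrictions satisfy $E_M u_k|_{B(0,n)} \rightharpoonup E_M u|_{B(0,n)}$ in $H^1(B(0,n))$.

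Next, since $B(0,n)$ is a bounded Lipschitz domain and $\alpha+1 < 2^\ast = \frac{2d}{d-2}$ when $d\ge 3$ (and $\alpha+1$ arbitrary when $d\in\{1,2\}$), the Rellich--Kondrachov theorem provides a compact embedding $H^1(B(0,n)) \hookrightarrow L^{\alpha+1}(B(0,n))$. Hence the weakly convergent sequence converges strongly:
\begin{align*}
E_M u_k \to E_M u \qquad \text{in } L^{\alpha+1}(B(0,n)).
\end{align*}

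Finally, since $E_M v = v$ almost everywhere on $M$ for every $v \in H^1(M)$, and $M_n \subset M \cap B(0,n)$, the restriction gives
\begin{align*}
\norm{u_k-u}_{L^{\alpha+1}(M_n)} \le \norm{E_M u_k - E_M u}_{L^{\alpha+1}(B(0,n))} \to 0,
\end{align*}
which is the desired conclusion. The only mildly subtle point is that $M_n = M \cap B(0,n)$ need not itself be a Lipschitz domain, but this is precisely why the extend-and-restrict argument is required rather than applying Rellich--Kondrachov directly on $M_n$.
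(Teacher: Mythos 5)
Your proof is correct and follows essentially the same route as the paper: extend via Stein's operator $E_M$, use weak continuity of bounded linear maps to transfer the weak convergence to $H^1(B(0,n))$, apply Rellich--Kondrachov on the ball, and restrict back to $M_n$. If anything, your write-up is slightly more careful than the paper's (which has a notational slip in the definition of its restriction operator) in spelling out why the ball $B(0,n)$ is the right bounded Lipschitz domain to work on and why $\alpha+1$ is subcritical.
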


\begin{proof}
	We fix $n\in\N$ and observe that the restriction operator $\mathfrak{R}\colon H^1(\Rd)\to H^1(M)$, $\mathfrak{R}u=u|_M$ is bounded. 
	Hence, we deduce $\mathfrak{R}E_Mu_k\rightharpoonup \mathfrak{R}E_Mu$ in $H^1(B(0,n))$ and from the Rellich-Kondrachov theorem, we infer  that $H^1(B(0,n))\hookrightarrow L^{\alpha+1}(B(0,n))$ is compact. The fact that compact operators map weakly convergent sequences to strongly convergent ones thus proves $\mathfrak{R}E_Mu_k\to \mathfrak{R}E_Mu$ in $L^{\alpha+1}(B(0,n))$ and therefore, $u_k\to u$ in $L^{\alpha+1}(M_n)$.
\end{proof}

\subsubsection{Dirichlet boundary conditions}

Now, let $M\subset \R^d$ be a domain and $A=-\Delta_D$.
As in the Neumann case, we get $\EA=H^1_0(M)$ as a consequence of the square root property and with $S=\varepsilon-\Delta_D$ for an arbitrary $\varepsilon>0$, Theorem 6.10 in \cite{OuhabazHeatEquations} shows that $e^{-tS}$ has Gaussian bounds. This proves Assumption~\ref{spaceAssumptions} (i) for $p_0=1$. 
Moreover, we observe that the trivial extension operator $E_{M,0}\colon L^0(M)\to L^0(\R^d)$ which extends each function on $M$ by $0$ to $\Rd$ satisfies for every $p\in [1,\infty)$, $k\in\N_0$, $u\in W^{k,p}_0(M)$ that
\begin{align}\label{trivialExtension}
E_{M,0} u=u\qquad \text{a.e. on $M$} \qquad \text{and}\qquad \norm{E_{M,0} u}_{W^{k,p}_0(\R^d)}= \norm{u}_{W^{k,p}_0(M)}.
\end{align}
Riesz-Thorin hence proves $E_{M,0}\in \LinearOperatorsTwo{H^s_0(M)}{H^s(\R^d)}$.
Using the same arguments as in the Neumann case with $E_M$ replaced by $E_{M,0}$ it is now straightforward to complete the proof of Assumption~\ref{spaceAssumptions} for the Dirichlet Laplacian.

\subsection{The stochastic NLS on the full space}

In contrast to the case of domains in $\Rd$, it is actually simpler to verify Assumption~\ref{spaceAssumptions} for $M=\Rd$ and $A=-\Delta$ with $S=\varepsilon-\Delta$. Item~i) holds since the heat kernel equals the density of the Gaussian distribution of $\Rd$, item~ii) is true for $M_n=B(0,n)$, $n\in\N$, and item~iii) follows from the Sobolev embeddings of the Bessel potential spaces on $\Rd$. Actually, our procedure is not the natural one to deal with $M=\Rd$ as we neither use Strichartz estimates nor other consequences of the dispersive behavior of the Schr\"odinger group on $\Rd$. In the deterministic theory, these methods have been proved very useful and instead of going into further details, we just refer to the popular monographs \cite{Tao}, \cite{Cazenave}, and \cite{Linares}. Also in the presence of stochastic noise, Strichartz estimates have been succesfully applied in the literature. We would like to mention articles using a truncation of the nonlinearity and estimates for the stochastic convolution (e.g. \cite{BouardLzwei}, \cite{BouardHeins} and \cite{FHornung}, \cite{dissertationFH}) and articles which employ the rescaling approach  (e.g. \cite{BarbuL2}, \cite{BarbuH1}, \cite{BarbuNoBlowUp}, \cite{HerrScattering}, \cite{zhangCritical}, \cite{zhang2017}). All these results are stronger compared to Theorem~\ref{mainTheoremIntro} in the sense that they prove existence \& uniqueness of stochastically strong solutions. To the best of our knowledge, however, even in the $\Rd$-setting, Theorem~\ref{mainTheoremIntro} contains new results. On the one hand,   the regularity and decay assumptions on the noise coefficients in the papers with the rescaling approach are much stronger than \eqref{MultiplierClassIntro}. On the other hand, the results in \cite{BouardHeins} and \cite{dissertationFH} for $H^1$ initial data do not cover the full scale of subcritical exponents $\alpha$ as in i) and ii) of Theorem~\ref{mainTheoremIntro}.  

\subsection{The stochastic NLS on manifolds}

Let $(M,g)$ be a Riemannian manifold such that

\begin{align}\label{ManifoldAssumption}
\text{$M$ is complete, has a positive injectivity radius, bounded geometry,}\nonumber\\ \text{and nonnegative Ricci curvature.}
\end{align}

We refer to \cite{Triebel}, Chapter 7, for a definition of the notions above and remark that compact manifolds with nonnegative Ricci curvature satisfy \eqref{ManifoldAssumption}.
We emphasize that in the compact case, the doubling property is also true without curvature bound and thus, the assumption of nonnegative Ricci curvature can be dropped. In this sense, the result of the present paper in the manifold-setting generalizes the existence result from \cite{ExistencePaper}. 
Let $X=M$, $\rho$ be the geodesic distance and $\mu$ be the canonical volume measure on $X$.
 Moreover, let $A:=-\Delta_g$, where $\Delta_g$ denotes the Laplace-Beltrami operator on $M$. The operator $A$ is associated to the form  $\mathfrak{a}: H^1(M) \times H^1(M) \to \C$ with
\begin{align*}
\mathfrak{a}(u,v)=\int_M \nabla u \cdot \nabla \bar{v} \, \df \mu(x), \quad u,v\in H^1(M)
\end{align*}
and thus, the square root property shows $\EA=H^1(M)$. Let $S:=I-\Delta_g.$ Then, $S$ is strictly positive. For a comprehensive study of Gaussian estimates on manifolds, we refer to \cite{GrigoryanLectureNotes}. 
In short, we note that \eqref{ManifoldAssumption}  implies that $M$ admits a relative Faber-Krahn inequality (cf. \cite{GrigoryanLectureNotes}, Definition 6.12). Thus, Theorem 6.13   in \cite{GrigoryanLectureNotes} (see also Corollary 6.14 in \cite{GrigoryanLectureNotes} for an untechnical overview of the heat kernel estimates on manifolds) implies the doubling property \eqref{doubling} and the upper Gaussian estimate for $e^{-tS}$, i.e. \eqref{generalizedGaussianEstimate} for $p_0=1$.  
Furthermore, we refer to, e.g., Theorem III.1.2 in \cite{Bolleyer} for a proof of Assumption~\ref{spaceAssumptions} ii) in the present setting.
As in the case of domains, one can verify Assumption~\ref{spaceAssumptions} iii) by complex interpolation and embedding results of fractional Sobolev space. We refer to \cite{Bolleyer}, Theorem II.1.2 and \cite{Triebel}, chapter 7, for these properties.
\section*{Acknowledgement}

The author gratefully acknowledges financial support by the
Deutsche Forschungsgemeinschaft (DFG) through CRC 1173. Moreover, he would like to thank Martin Spitz for helpful discussions on Section 6. 

\begin{thebibliography}{BHW18b}
	
	\bibitem[AF03]{Adams}
	R.~A. Adams and J.~J.~F. Fournier.
	\newblock {\em Sobolev spaces}, volume 140 of {\em Pure and Applied Mathematics
		(Amsterdam)}.
	\newblock Elsevier/Academic Press, Amsterdam, second edition, 2003.
	
		\bibitem[BRZ14]{BarbuL2}
		V.~Barbu, M.~R\"ockner, and D.~Zhang.
		\newblock Stochastic nonlinear {Schr\"odinger} equations with linear
		multiplicative noise: Rescaling approach.
		\newblock {\em Journal of Nonlinear Science}, 24(3):383--409, 2014.
		
		\bibitem[BRZ16]{BarbuH1}
		V.~Barbu, M.~R{\"o}ckner, and D.~Zhang.
		\newblock Stochastic nonlinear {Schr{\"o}dinger} equations.
		\newblock {\em Nonlinear Analysis: Theory, Methods \& Applications},
		136:168--194, 2016.
		
		\bibitem[BRZ17]{BarbuNoBlowUp}
		V. Barbu, M. R{\"o}ckner, and D. Zhang.
		\newblock Stochastic nonlinear {Schr{\"o}dinger} equations: no blow-up in the
		non-conservative case.
		\newblock {\em Journal of Differential Equations}, 263(11):7919--7940, 2017.
		
		\bibitem[BRZ18]{barbu2018optimal}
		V.~Barbu, M.~R{\"o}ckner, and D.~Zhang.
		\newblock Optimal bilinear control of nonlinear stochastic {Schr\"odinger}
		equations driven by linear multiplicative noise.
		\newblock {\em The Annals of Probability}, 46(4):1957--1999, 2018.
	
	\bibitem[BL76]{bergh1976interpolation}
	J.~Bergh and J.~L{\"o}fstr{\"o}m.
	\newblock {\em Interpolation spaces: an introduction}.
	\newblock Springer, 1976.	
	
		\bibitem[Bol15]{Bolleyer}
		A.~Bolleyer.
		\newblock {\em Spectrally Localized {Strichartz} Estimates and Nonlinear
			{Schr\"odinger} Equations}.
		\newblock PhD thesis, Karlsruhe Institute of Technology, 2015.
	
	\bibitem[BHM19]{Brze?niak2019}
	Z.~Brze{\'{z}}niak, F.~Hornung, and U.~Manna.
	\newblock Weak martingale solutions for the stochastic nonlinear
	{Schr\"odinger} equation driven by pure jump noise.
	\newblock {\em Stochastics and Partial Differential Equations: Analysis and
		Computations}, May 2019.
	
	\bibitem[BHW18a]{ExistencePaper}
	Z.~Brze{\'{z}}niak, F.~Hornung, and L.~Weis.
	\newblock Martingale solutions for the stochastic nonlinear {Schr\"odinger}
	equation in the energy space.
	\newblock {\em Probability Theory and Related Fields}, Nov 2018.
	
	\bibitem[BHW18b]{brzezniak2018uniqueness}
	Z.~Brzezniak, F.~Hornung, and L.~Weis.
	\newblock Uniqueness of martingale solutions for the stochastic nonlinear
	{Schr\"odinger} equation on 3d compact manifolds.
	\newblock {\em arXiv preprint arXiv:1808.10619}, 2018.
	
	\bibitem[BM13]{BrzezniakMotyl}
	Z.~Brze{\'z}niak and E.~Motyl.
	\newblock Existence of a martingale solution of the stochastic
	{Navier}-{Stokes} equations in unbounded 2{D} and 3{D} domains.
	\newblock {\em Journal of Differential Equations}, 254(4):1627--1685, 2013.
	
	\bibitem[BM14]{BrzezniakStrichartz}
	Z.~Brze{\'{z}}niak and A.~Millet.
	\newblock On the stochastic {Strichartz} estimates and the stochastic nonlinear
	{Schr{\"o}dinger} equation on a compact {Riemannian} manifold.
	\newblock {\em Potential Analysis}, 41(2):269--315, 2014.
	
	\bibitem[BO11]{BrzezniakOndrejat}
	Z.~Brze{\'{z}}niak and M.~Ondrej{\'a}t.
	\newblock Weak solutions to stochastic wave equations with values in
	{Riemannian} manifolds.
	\newblock {\em Communications in Partial Differential Equations}, page~41,
	2011.
	
	\bibitem[Caz03]{Cazenave}
	T.~Cazenave.
	\newblock {\em Semilinear {Schr{\"o}dinger} Equations}.
	\newblock Courant lecture notes in mathematics. American Mathematical Society,
	2003.
	
	\bibitem[CM19]{cheung2019stochastic}
	K.~Cheung and R.~Mosincat.
	\newblock Stochastic nonlinear {Schr\"odinger} equations on tori.
	\newblock {\em Stochastics and Partial Differential Equations: Analysis and
		Computations}, 7(2):169--208, 2019.
	
	\bibitem[dBD99]{BouardLzwei}
	A.~de~Bouard and A.~Debussche.
	\newblock A stochastic nonlinear {Schr\"odinger} equation with multiplicative
	noise.
	\newblock {\em Communications in Mathematical Physics}, 205(1):161--181, 1999.
	
		\bibitem[DPZ14]{daPrato}
		G.~Da~Prato and P.J. Zabczyk.
		\newblock {\em Stochastic Equations in Infinite Dimensions}.
		\newblock Encyclopedia of Mathematics and its Applications. Cambridge
		University Press, 2014.
	
	\bibitem[dBD03]{BouardHeins}
	A.~de~Bouard and A.~Debussche.
	\newblock The stochastic nonlinear {Schr\"odinger} equation in ${H}^1$.
	\newblock {\em Stochastic Analysis and Applications}, 21(1):97--126, 2003.
	
	\bibitem[dBH19]{de2019nonlinear}
	A.~de~Bouard and E.~Hausenblas.
	\newblock The nonlinear {Schr\"odinger} equation driven by jump processes.
	\newblock {\em Journal of Mathematical Analysis and Applications}, 2019.
	
	\bibitem[dBHO19]{de2019uniqueness}
	A.~de~Bouard, E.~Hausenblas, and M.~Ondrej{\'a}t.
	\newblock Uniqueness of the nonlinear {Schr\"odinger} equation driven by jump
	processes.
	\newblock {\em Nonlinear Differential Equations and Applications NoDEA},
	26(3):22, 2019.
	
	\bibitem[DM19]{debussche2019solution}
	A.~Debussche and J.~Martin.
	\newblock Solution to the stochastic {Schr{\"o}dinger} equation on the full
	space.
	\newblock {\em Nonlinearity}, 32(4):1147, 2019.
		
	\bibitem[DW18]{debussche2018schrodinger}
	A.~Debussche and H.~Weber.
	\newblock The {Schr{\"o}dinger} equation with spatial white noise potential.
	\newblock {\em Electronic Journal of Probability}, 23, 2018.
	
	\bibitem[FX18]{fan2018global}
	CJ~Fan and WJ~Xu.
	\newblock Global well-posedness for the defocusing mass-critical stochastic
	nonlinear {Schr{\"o}dinger} equation on {R} at {L}2 regularity.
	\newblock {\em arXiv preprint arXiv:1810.07925}, 2018.
	
	\bibitem[Gri99]{GrigoryanLectureNotes}
	Alexander Grigor?yan.
	\newblock Estimates of heat kernels on {Riemannian} manifolds.
	\newblock {\em In other words}, 4:1--1, 1999.
	
	\bibitem[GV85]{ginibreVelo1985}
	J.~Ginibre and G.~Velo.
	\newblock The global {Cauchy} problem for the nonlinear {Schr{\"o}dinger}
	equation revisited.
	\newblock In {\em Annales de l'Institut Henri Poincare (C) Non Linear
		Analysis}, volume~2, pages 309--327. Elsevier, 1985.
	
		\bibitem[GUZ18]{gubinelli2018semilinear}
		M.~Gubinelli, B.~Ugurcan, and I.~Zachhuber.
		\newblock Semilinear evolution equations for the {Anderson} {Hamiltonian} in
		two and three dimensions.
		\newblock {\em arXiv preprint arXiv:1807.06825}, 2018.

	\bibitem[HRZ19]{HerrScattering}
	S.~Herr, M.~R{\"o}ckner, and D.~Zhang.
	\newblock Scattering for stochastic nonlinear {Schr\"odinger} equations.
	\newblock {\em Communications in Mathematical Physics}, 368(2):843--884, Jun
	2019.
		
	\bibitem[Hor18a]{dissertationFH}
	F.~Hornung.
	\newblock {\em Global solutions of the nonlinear {Schr\"odinger} equation with
		multiplicative noise}.
	\newblock PhD thesis, Karlsruhe Institute of Technology, 2018.
	
	\bibitem[Hor18b]{FHornung}
	F.~Hornung.
	\newblock The nonlinear stochastic {Schr{\"o}dinger} equation via stochastic
	{Strichartz} estimates.
	\newblock {\em Journal of Evolution Equations}, Feb 2018.
	
	\bibitem[Hor18c]{hornung2017strong}
	L.~Hornung.
	\newblock Strong solutions to a nonlinear stochastic {Maxwell} equation with a
	retarded material law.
	\newblock {\em Journal of Evolution Equations}, 2018.
	
	\bibitem[Jak98]{Jakubowski}
	A.~Jakubowski.
	\newblock The almost sure {Skorokhod} representation for subsequences in
	nonmetric spaces.
	\newblock {\em Theory of Probability \& Its Applications}, 42(1):167--174,
	1998.
	
	\bibitem[KL15]{keller2015variational}
	D.~Keller and H.~Lisei.
	\newblock Variational solution of stochastic {Schr{\"o}dinger} equations with
	power-type nonlinearity.
	\newblock {\em Stochastic Analysis and Applications}, 33(4):653--672, 2015.
	
	\bibitem[KL16]{lisei2016stochastic}
	D.~Keller and H.~Lisei.
	\newblock A stochastic nonlinear {Schr{\"o}dinger} problem in variational
	formulation.
	\newblock {\em Nonlinear Differential Equations and Applications NoDEA},
	23(2):1--27, 2016.
	
	\bibitem[KU15]{kunstmannUhl}
	P.~C. Kunstmann and M.~Uhl.
	\newblock Spectral multiplier theorems of {H\"ormander} type on {Hardy} and
	{Lebesgue} spaces.
	\newblock {\em Journal of Operator Theory}, 73(1):27--69, 2015.
	
	\bibitem[LP14]{Linares}
	F.~Linares and G.~Ponce.
	\newblock {\em Introduction to nonlinear dispersive equations}.
	\newblock Springer, 2014.
	
	\bibitem[Ond10]{OndrejatWave}
	M.~Ondrej\'{a}t.
	\newblock Stochastic nonlinear wave equations in local {S}obolev spaces.
	\newblock {\em Electron. J. Probab.}, 15:no. 33, 1041--1091, 2010.
	
	\bibitem[Ouh09]{OuhabazHeatEquations}
	E.M. Ouhabaz.
	\newblock {\em Analysis of Heat Equations on Domains. (LMS-31)}.
	\newblock London Mathematical Society Monographs. Princeton University Press,
	2009.
	
	\bibitem[Tao06]{Tao}
	T. Tao.
	\newblock {\em Nonlinear dispersive equations: local and global analysis}.
	\newblock Number 106 in Regional Conference Series in Mathematics. American
	Mathematical Soc., 2006.
	
	\bibitem[Tri92]{Triebel}
	H.~Triebel.
	\newblock {\em Theory of function spaces II}.
	\newblock Monographs in mathematics ; 84. Birkh\"auser, Basel [u.a.], 1992.
	
	\bibitem[Tri95]{TriebelInterpolationTheory}
	H.~Triebel.
	\newblock {\em Interpolation theory, function spaces, differential operators}.
	\newblock Johann Ambrosius Barth Verlag, 1995.
	
	\bibitem[Uhl11]{Uhl}
	M.~Uhl.
	\newblock {\em Spectral multiplier theorems of H{\"o}rmander type via
		generalized Gaussian estimates}.
	\newblock PhD thesis, Karlsruhe Institute of Technology, 2011.
	
	\bibitem[Zha17]{zhang2017}
	D.~Zhang.
	\newblock Strichartz and local smoothing estimates for stochastic dispersive
	equations with linear multiplicative noise.
	\newblock {\em arXiv preprint arXiv:1709.03812}, 2017.
	
	\bibitem[Zha18]{zhangCritical}
	D.~Zhang.
	\newblock Stochastic nonlinear {Schr\"odinger} equations in the mass and energy
	critical cases.
	\newblock {\em arXiv preprint arXiv:1811.00167}, 2018.
	
	\bibitem[Zha19]{zhang2019optimal}
	D.~Zhang.
	\newblock Optimal bilinear control of stochastic nonlinear {Schr\"odinger}
	equations: mass-(sub) critical case.
	\newblock {\em arXiv preprint arXiv:1902.03559}, 2019.
	
\end{thebibliography}

\end{document}